\newtheorem{defn}{Definition}[section]
\newtheorem{corollary}[defn]{Corollary}
\newtheorem{lemma}[defn]{Lemma}
\newtheorem{thm}[defn]{Theorem}
\newtheorem{cor}[defn]{Corollary}
\newtheorem{prop}[defn]{Proposition}
\newtheorem{conj}[defn]{Conjecture}
\theoremstyle{definition}
\newtheorem*{ack}{Acknowledgements}
\newcommand{\CC}{\mathbb C}
\newcommand{\Q}{\mathbb Q}
\newcommand{\Z}{\mathbb Z}
\newcommand{\F}{\mathbb F}
\newcommand{\PP}{\mathbb P}
\newcommand{\Rats}{\mathbb{Q}}
\newcommand{\Gal}{\operatorname{Gal}}
\newcommand{\Aut}{\operatorname{Aut}}
\newcommand{\GQ}{\Gal(\overline{\Rats}/\Rats)}
\newcommand{\GL}{\operatorname{GL}}
\newcommand{\PGL}{\operatorname{PGL}}
\newcommand{\spl}{\mathcal{C}_{\text{sp}}}
\begin{document}

\bibliographystyle{plain}
\title[Abelian division fields]{Elliptic Curves with abelian division fields}

\author{Enrique Gonz\'alez--Jim\'enez}
\address{Universidad Aut{\'o}noma de Madrid, Departamento de Matem{\'a}ticas, Madrid, Spain}
\email{enrique.gonzalez.jimenez@uam.es}
%\urladdr{http://www.uam.es/enrique.gonzalez.jimenez}
\author{\'Alvaro Lozano-Robledo}
\address{University of Connecticut, Department of Mathematics, Storrs, CT 06269, USA}
\email{alvaro.lozano-robledo@uconn.edu}

\subjclass{Primary: 11G05, Secondary: 14H52.}
\thanks{The first author was partially  supported by the grant MTM2012--35849.}

\begin{abstract} Let $E$ be an elliptic curve over $\Q$, and let $n\geq 1$. The central object of study of this article is the  division field $\Q(E[n])$ that results by adjoining to $\Q$ the coordinates of all $n$-torsion points on $E(\overline{\Q})$. In particular, we classify all curves $E/\Q$ such that $\Q(E[n])$ is as small as possible, that is, when $\Q(E[n])=\Q(\zeta_n)$, and we prove that this is only possible for $n=2,3,4$, or $5$. More generally, we classify all curves such that $\Q(E[n])$ is contained in a cyclotomic extension of $\Q$ or, equivalently (by the Kronecker-Weber theorem), when $\Q(E[n])/\Q$ is an abelian extension. In particular, we prove that this only happens for $n=2,3,4,5,6$, or $8$, and we classify the possible Galois groups that occur for each value of $n$.
\end{abstract}

\maketitle

\section{Introduction}

Let $E$ be an elliptic curve defined over $\Q$, and let $n\geq 1$. The central object of study of this article is the division field $\Q(E[n])$ that results by adjoining to $\Q$ the coordinates of all $n$-torsion points on $E(\overline{\Q})$, where $\overline{\Q}$ is a fixed algebraic closure of $\Q$. The existence of the Weil pairing (see \cite[III, Corollary 8.1.1]{silverman}) implies that $\Q(E[n])$ contains all the $n$-th roots of unity of $\overline{\Q}$, i.e.,~we have an inclusion $\Q(\zeta_{n})\subseteq \Q(E[n])$, where $\zeta_{n}$ is a primitive $n$-th root of unity. The goal of this article is to study the case when $\Q(E[n])$ is as small as possible, that is, when $\Q(E[n])=\Q(\zeta_n)$ and, more generally, when $\Q(E[n])$ is contained in a cyclotomic extension of $\Q$ or, equivalently (by the Kronecker-Weber theorem), when $\Q(E[n])/\Q$ is an abelian extension.  For instance:
\begin{itemize}
\item ($n=2$)  $E_{15a2}: y^2+xy+y=x^3+x^2-135x-660$, satisfies $\Q(E[2])=\Q=\Q(\zeta_2)$,
\item ($n=3$)  $E_{19a1}: y^2 + y = x^3 + x^2 - 9x - 15$, satisfies $\ \Q(E[3])=\Q(\sqrt{-3})=\Q(\zeta_3)$,
\item ($n=4$) $E_{15a1}:y^2 +xy+y= x^3 +x^2 -10x - 10$, satisfies $\Q(E[4])=\Q(i)=\Q(\zeta_{4})$,
\item ($n=5$) $E_{11a1}:y^2 + y = x^3 - x^2 - 10x - 20$, satisfies $\Q(E[5])=\Q(\zeta_5)$,
\item ($n=6$) $E_{14a1}:y^2+xy+y=x^3+4x-6$, satisfies $\Q(E[6])=\Q(\zeta_6,\sqrt{-7})$,
\item ($n=8$) $E_{15a1}:y^2 +xy+y= x^3 +x^2 -10x - 10$, satisfies $\Q(E[8])=\Q(\zeta_{8},\sqrt{3},\sqrt{7})$.
\end{itemize}

 Previously, Paladino \cite{paladino} has classified all the curves $E/\Q$ with $\Q(E[3])=\Q(\zeta_3)$. In a more general setting, when $E$ is defined over a number field $K$, the work of Halberstadt, Merel \cite{merel}, Merel and Stein \cite{merel2}, and Rebolledo \cite{rebolledo} shows that if $p$ is prime, and $K(E[p])=\Q(\zeta_p)$, then $p=2,3,5$ or $p>1000$. 

The main result of this article is a complete classification (and parametrization) of all elliptic curves $E/\Q$, up to isomorphism over $\Q$, such that $\Q(E[n])$ is abelian over $\Q$, and those curves such that $\Q(E[n])=\Q(\zeta_n)$. Furthermore, we classify all the abelian Galois groups $\Gal(\Q(E[n])/\Q)$ for each value of $n\geq 2$ that may occur. 

\

\begin{thm}\label{thm-main} Let $E/\Q$ be an elliptic curve. If there is an integer $n\geq 2$  such that $\Q(E[n])=\Q(\zeta_n)$, then $n=2,3,4,$ or $5$. More generally, if $\Q(E[n])/\Q$ is abelian, then  $n=2,3,4,5,6$, or $8$.  Moreover, $\Gal(\Q(E[n])/\Q)$ is isomorphic to one of the following groups:
\begin{center}
% % \begin{table}[h!]
\renewcommand{\arraystretch}{1.2}
\begin{tabular}{|c||c|c|c|c|c|c|}
\hline
$n$ & $2$ & $3$ & $4$ & $5$ & $6$ & $8$\\
\hline 
\multirow{4}{*}{$\Gal(\Q(E[n])/\Q)$}  & $\{0\}$ & $\Z/2\Z$ & $\Z/2\Z$ & $\Z/4\Z$ & $(\Z/2\Z)^2$ & $(\Z/2\Z)^4$ \\
 & $\Z/2\Z$ & $(\Z/2\Z)^2$ & $(\Z/2\Z)^2$ & $\Z/2\Z\times \Z/4\Z$ & $(\Z/2\Z)^3$ & $(\Z/2\Z)^5$ \\
 & $\Z/3\Z$ &  & $(\Z/2\Z)^3$ & $(\Z/4\Z)^2$ & & $(\Z/2\Z)^6$\\
 &  &  & $(\Z/2\Z)^4$ & & & \\
 \hline
\end{tabular}
% % \end{table} 
\end{center}
Furthermore, each possible Galois group occurs for infinitely many distinct $j$-invariants.
\end{thm}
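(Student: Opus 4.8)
The plan is to translate the statement into a question about the mod-$n$ Galois representation $\rho_{E,n}\colon \GQ \to \GL_2(\Z/n\Z)$ and the image $G_n := \rho_{E,n}(\GQ)$. The condition that $\Q(E[n])/\Q$ be abelian is equivalent to $G_n$ being an abelian subgroup of $\GL_2(\Z/n\Z)$, and the finer condition $\Q(E[n]) = \Q(\zeta_n)$ is equivalent to $\det\colon G_n \to (\Z/n\Z)^\times$ being an isomorphism, since $\Q(\zeta_n)$ is the fixed field of $\ker\det$ by the Weil pairing. First I would reduce to prime powers $n = p^k$: if $\Q(E[n])/\Q$ is abelian then so is $\Q(E[p^k])/\Q$ for every prime power $p^k \mid n$, and $\Q(E[n])$ is the compositum of these, so it suffices to determine for which prime powers an abelian image is possible and then check which products are compatible (in particular, that the determinant characters glue, which forces constraints like $n \nmid 24$ being excluded once $p^k$ ranges appropriately). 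So the core is: classify all $E/\Q$ with $G_{p^k}$ abelian.

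The key step is a structural constraint on abelian subgroups of $\GL_2(\Z/p^k\Z)$ arising as mod-$p^k$ images, combined with the fact that $\bar\rho_{E,p}$ surjects onto $\GL_2(\F_p)$ for all but finitely many $j$ (Serre), so that an abelian image forces $\bar\rho_{E,p}$ to be contained in a Borel or a normalizer-of-(split or nonsplit)-Cartan, and in fact — being abelian — inside an actual Cartan or a smaller group. For $p \geq 7$ one shows no abelian image can contain the image of complex conjugation together with enough of the cyclotomic character: complex conjugation has order $2$ and determinant $-1$, and in an abelian group it must commute with everything, which, together with the requirement $\zeta_{p^k} \in \Q(E[p^k])$ (so $(\Z/p^k\Z)^\times$ is a quotient, forcing $|G_{p^k}| \geq \varphi(p^k) \geq 6$), is incompatible with the known classification of rational points on the relevant modular curves $X_{\mathrm{sp}}^+(p)$, $X_{\mathrm{nsp}}^+(p)$, $X_0(p^k)$, $X_1(p^k)$ for $p^k \notin \{2,3,4,5,8,9\}$ — and one rules out $9$ separately, likely using that $X_{\mathrm{sp}}^+(9)$ or the relevant abelian-image cover has only cuspidal/CM rational points. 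For each surviving $p^k \in \{2,3,4,5,8\}$, I would enumerate the abelian subgroups of $\GL_2(\Z/p^k\Z)$ that (i) have full determinant and (ii) contain an element conjugate to $\left(\begin{smallmatrix} 1 & 0 \\ 0 & -1\end{smallmatrix}\right)$ or $\left(\begin{smallmatrix} -1 & 0 \\ 0 & -1 \end{smallmatrix}\right)$ (possible images of complex conjugation on an abelian field), and match these against Rouse–Zureick-Brown / known lists of $\ell$-adic images to read off the possible $\Gal(\Q(E[n])/\Q)$, producing exactly the table.

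For the "infinitely many $j$-invariants" clause, the plan is to exhibit, for each entry of the table, an explicit one-parameter family: each abelian image group $H \leq \GL_2(\Z/n\Z)$ in the list corresponds to a modular curve $X_H$ of genus $0$ with a rational point (the sample curves $E_{15a2}, E_{19a1}, E_{15a1}, E_{11a1}, E_{14a1}$ above witness rational points), hence $X_H(\Q)$ is infinite and its image in the $j$-line is infinite; I would write down the corresponding rational parametrization $j = j_H(t)$ (for instance via the classical parametrizations of $X_0(2), X_1(3), X_0(4), X_1(5)$ and Tate-style normal forms for the $2$-power cases) and verify that a generic member of the family genuinely has mod-$n$ image equal to $H$ and not a proper subgroup, by checking a discriminant/Kummer condition is satisfied for infinitely many $t \in \Q$. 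The main obstacle I anticipate is twofold: first, cleanly ruling out $p^k = 7, 9$ and all larger prime powers, which rests on deep results about rational points on modular curves and must be invoked carefully; and second, the bookkeeping of \emph{exactly} which abelian groups occur — ensuring the list is complete and that each is realized over $\Q$ (not just over some number field), which requires checking that the relevant entanglement and determinant-compatibility conditions are simultaneously satisfiable, and that no genus-$0$ modular curve in the list fails to have a rational point.
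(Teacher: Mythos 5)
Your architecture (reduce to prime powers, constrain the image using complex conjugation, use Rouse--Zureick-Brown for the $2$-power part, parametrize by genus-zero modular curves for the infinitude claim) matches the paper's in outline, but the proposal breaks at the two places where the real work happens. First, to eliminate $p\geq 7$ you appeal to ``the known classification of rational points on $X_{\mathrm{sp}}^+(p)$, $X_{\mathrm{nsp}}^+(p)$,\dots''. That classification is not available: rational points on the normalizer-of-nonsplit-Cartan curves are an open problem (this is precisely the unresolved case of Serre's uniformity question), and the split-Cartan-normalizer curves were only handled for $p\geq 11$, $p\neq 13$, at the time. The paper's route avoids these curves entirely: it proves (Propositions \ref{prop-borel2}, \ref{prop-split}, \ref{prop-nonsplit}, assembled in Theorem \ref{thm-diagonal}) that for $p>2$ an \emph{abelian} image containing a complex conjugation (order $2$, determinant $-1$) is forced to be \emph{diagonalizable over $\F_p$} in every one of Serre's cases --- in particular the nonsplit Cartan itself is excluded because its unique involution is $-\mathrm{Id}$, of determinant $+1$. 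Diagonalizability gives two independent rational $p$-isogenies, so $C_p(E)\geq 3$, and Kenku's bound $C(E)=\prod_p C_p(E)\leq 8$ with $C_p\leq 2$ for $p\geq 7$ kills all large primes (Corollary \ref{cor-p5}); the same counting disposes of $p^2$ for $p=3,5$ and of $n=15$. Your sketch gestures at the complex-conjugation ingredient but does not draw the conclusion (split diagonalizability) that makes the argument close.

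Second, your mechanism for composite $n$ --- ``check that the determinant characters glue'' --- does not rule out anything, since the determinant is always the full cyclotomic character and gluing is automatic. The actual obstructions to $n=10$ and $n=12$ are not formal: for $n=10$ the paper combines a nonexistent $50$-isogeny (Theorem \ref{thm-ratnoncusps}) with a determination of all rational points on an auxiliary genus-$3$ hyperelliptic curve via an elliptic quotient (Lemma \ref{lem-10abelian}), and for $n=12$ it needs Lemma \ref{lem-6abelian} (another explicit rational-points computation) plus a $4$-isogeny count to contradict Kenku again. Without arguments of this kind your proof would leave $n=10,12,24$ standing, and the statement would be false as written. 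The $2$-power analysis via \cite{rouse} and the parametrization strategy for the ``infinitely many $j$-invariants'' clause are essentially what the paper does and are fine.
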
 

Let $\rho_{E,n}\colon\Gal(\overline{\Q}/\Q)\to \Aut(E[n])\cong \GL(2,\Z/n\Z)$ be the representation induced by the action of Galois on $E[n]$. In \cite[\S 4.3]{serre1}, Serre asked whether there is a constant $N$, that does not depend on $E$, and such that $\rho_{E,p}$ is surjective for all elliptic curves $E/\Q$ without CM, and for all $p> N$. Serre actually asks whether $N=37$ works. This question, usually known as ``Serre's uniformity problem'', has generated great interest (see \cite{bilu}, \cite{alina1}, \cite{alina2}, \cite{daniels}, \cite{kraus}, \cite{masser}, \cite{pellarin}). We note here that, as a corollary of our theorem, we obtain the following uniform statement: %for all elliptic curves $E$ defined over the rationals, and for all $n\geq 9$, the image of $\rho_{E,n}$ is non-abelian.

\begin{corollary}
For any $n\geq 9$, and any elliptic curve $E/\Q$, the image of $\rho_{E,n}$ is non-abelian.
\end{corollary}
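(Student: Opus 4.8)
The plan is to derive this as an immediate consequence of Theorem~\ref{thm-main}. The only thing to check is that the image of $\rho_{E,n}$ is abelian if and only if the extension $\Q(E[n])/\Q$ is abelian. This is standard: the representation $\rho_{E,n}\colon \Gal(\overline{\Q}/\Q)\to \GL(2,\Z/n\Z)$ has kernel exactly $\Gal(\overline{\Q}/\Q(E[n]))$, the subgroup of automorphisms fixing the coordinates of every $n$-torsion point, and therefore it induces an isomorphism between $\Gal(\Q(E[n])/\Q)$ and the image of $\rho_{E,n}$. Hence the image is abelian precisely when $\Gal(\Q(E[n])/\Q)$ is abelian, which by the Kronecker--Weber theorem is equivalent to $\Q(E[n])$ being contained in a cyclotomic field.

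With this in hand I would argue by contraposition. Suppose that, for some integer $n\geq 1$ and some elliptic curve $E/\Q$, the image of $\rho_{E,n}$ is abelian. By the previous paragraph $\Q(E[n])/\Q$ is then an abelian extension, so Theorem~\ref{thm-main} forces $n\in\{2,3,4,5,6,8\}$. Equivalently, for every $n\geq 9$ and every elliptic curve $E/\Q$ the image of $\rho_{E,n}$ is non-abelian, which is the assertion of the corollary. (In contrast with Serre's uniformity question, observe that no CM hypothesis is required, since Theorem~\ref{thm-main} applies to all elliptic curves over $\Q$.)

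There is essentially no obstacle here: all of the work is contained in Theorem~\ref{thm-main}, and the corollary is merely its contrapositive together with the identification of the mod-$n$ image with $\Gal(\Q(E[n])/\Q)$. One could, if desired, first reduce to a smaller set of exponents using the fact that $\Q(E[m])\subseteq \Q(E[n])$ whenever $m\mid n$ --- so abelianness of the mod-$n$ image propagates to the mod-$m$ image for every divisor $m$ of $n$ --- but this reduction is already subsumed by the theorem and is not needed.
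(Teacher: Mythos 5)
Your proposal is correct and is exactly the argument the paper intends: the corollary is stated as an immediate consequence of Theorem~\ref{thm-main}, using the standard identification of the image of $\rho_{E,n}$ with $\Gal(\Q(E[n])/\Q)$. Nothing is missing.
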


In the course of proving Theorem \ref{thm-main}, we show the analogous statement in the simpler case of elliptic curves with complex multiplication.

\begin{thm}\label{thm-maincm0} Let $E/\Q$ be an elliptic curve with complex multiplication by an imaginary quadratic field $K$, with discriminant $d_K$. If there is an integer $n\geq 2$  such that $\Q(E[n])=\Q(\zeta_n)$, then $n=2$, or $3$. More generally, if $\Q(E[n])/\Q$ is abelian, then  $n=2,3$, or $4$.  Moreover, $\Gal(\Q(E[n])/\Q)$ is isomorphic to one of the following groups:
\begin{center}
% % \begin{table}[h!]
\renewcommand{\arraystretch}{1.3}
\begin{tabular}{|c||c|c|c|c|}
\hline
$n$ & \multicolumn{2}{c|}{$2$}  & $3$ & $4$ \\
\hline
$d_K$ & $-4$ & $-3,-7,-8$  & $-3$ & $-4$ \\
\hline 
\multirow{2}{*}{$\Gal(\Q(E[n])/\Q)$}  & $\{0\}$ & $\Z/2\Z$  & $\Z/2\Z$ & $(\Z/2\Z)^2$  \\
  &  $\Z/2\Z$ &  & $(\Z/2\Z)^2$ & $(\Z/2\Z)^3$  \\
\hline
\end{tabular}
% % \end{table} 
\end{center}
\end{thm}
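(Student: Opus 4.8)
The plan is to leverage the theory of complex multiplication to reduce the problem to an explicit analysis of a small list of $j$-invariants. Since $E/\Q$ has CM by an order in the imaginary quadratic field $K$ with discriminant $d_K$, the $j$-invariant of $E$ must be a rational integer, and there are only finitely many such: the thirteen rational CM $j$-invariants, corresponding to the nine imaginary quadratic fields $K$ of class number one (with $d_K \in \{-3,-4,-7,-8,-11,-19,-43,-67,-163\}$) and the orders therein of class number one. First I would recall the classical fact (see e.g. Serre, \emph{Propri\'et\'es galoisiennes...}) that for an elliptic curve with CM by an order $\mathcal{O}$ in $K$, the mod-$n$ image $\rho_{E,n}(\Gal(\overline{\Q}/\Q))$ contains, up to index $2$, the image of $(\mathcal{O}/n\mathcal{O})^\times$ acting on $E[n]$; more precisely, if $G_{E,n}$ denotes the image, then $G_{E,n} \cap \rho_{E,n}(\Gal(\overline{\Q}/K(E[n])))$ corresponds to a Cartan subgroup $C_n$ of $\GL(2,\Z/n\Z)$ (split if $d_K$ is a square mod $n$, nonsplit otherwise), and $[G_{E,n}:G_{E,n}\cap C_n] \leq 2$ with the nontrivial coset (when present) acting by complex conjugation, i.e. normalizing $C_n$.

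The key observation is then: $\Q(E[n])/\Q$ is abelian if and only if $G_{E,n}$ is an abelian subgroup of $\GL(2,\Z/n\Z)$, and $\Q(E[n]) = \Q(\zeta_n)$ if and only if $|G_{E,n}| = \varphi(n)$ (equivalently $G_{E,n}$ is the image of the determinant-lifting, i.e. $K(E[n]) = K(\zeta_n)$ and $[\Q(E[n]):\Q(\zeta_n)] = 1$). So the next step is purely group-theoretic: determine for which $n$ a subgroup of the normalizer of a Cartan subgroup $C_n$ in $\GL(2,\Z/n\Z)$, of index at most $2$ over its intersection with $C_n$, can be abelian. A Cartan subgroup $C_n$ is itself abelian, but the issue is the index-$2$ extension by complex conjugation $c$: conjugation by $c$ inverts $C_n$ (it acts as $x \mapsto \bar{x}$ on $(\mathcal{O}/n\mathcal{O})^\times$), so the extension is abelian only if every element of $C_n \cap G_{E,n}$ is fixed by inversion, i.e. has order dividing $2$. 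Computing $|C_n[2]|$ for split and nonsplit Cartan subgroups quickly forces $n$ to be small; one finds the extension-by-$c$ can be abelian only for $n \leq 4$ (roughly, $(\mathcal{O}/n\mathcal{O})^\times$ has exponent $\leq 2$ only for such $n$), and when $c \notin G_{E,n}$ (so $G_{E,n} \subseteq C_n$) one needs $\Q(\zeta_n) \subseteq \Q(E[n])$ to already be abelian over $\Q$ together with $G_{E,n}$ abelian — but $c$ is always in the image because $E$ is defined over $\Q$ (not just over $K$), unless $K \subseteq \Q(E[n])$ forces extra constraints; in fact $c \in G_{E,n}$ always here, and the only way to still get an abelian group is $n\le 4$.

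Having restricted to $n \in \{2,3,4\}$, I would finish by going through the finitely many rational CM curves (or rather, $j$-invariants, handling quadratic twists uniformly — note that twisting changes $E[n]$ only by the quadratic character of the twist, hence changes $\Q(E[n])$ by at most a quadratic extension, and for $n=2$ it does not change $\Q(E[2])$ at all since the $2$-division field depends only on the cubic) and computing $\Q(E[n])$ explicitly: for $n=2$ this is the splitting field of the $2$-division polynomial, for $n=3,4$ one uses the $3$- and $4$-division polynomials and the Weil pairing. This yields the $d_K$ for which abelianness occurs ($d_K = -4$ contributes $n=4$ via $j=1728$; $d_K=-3$ contributes $n=3$ via $j=0$; and $d_K \in \{-3,-4,-7,-8\}$ contribute $n=2$) and pins down the exact Galois groups in the table; the cases $\Q(E[n]) = \Q(\zeta_n)$ arise precisely as the smallest group in each column. \textbf{The main obstacle} I anticipate is the group theory of the index-$2$ normalizer extension: one must carefully track, for each $n$, not just whether $C_n$ itself is abelian but whether the specific subgroup $G_{E,n}$ — which may be a proper subgroup of the full normalizer and depends on the order $\mathcal{O}$ (conductor $f$) and on $n$ — can be abelian, and correctly handle the borderline case $n=4$ with $d_K=-4$ where $j = 1728$ has extra automorphisms, so that the Cartan is split of order $(\Z/4\Z)^\times \times (\Z/4\Z)^\times$ intersected appropriately and the resulting image is genuinely $(\Z/2\Z)^2$ or $(\Z/2\Z)^3$ rather than something larger.
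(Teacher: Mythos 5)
Your overall strategy (reduce to the thirteen rational CM $j$-invariants, then exploit that the image lies in the normalizer of a Cartan subgroup $C_n\cong(\mathcal{O}/n\mathcal{O})^\times$ with complex conjugation $c$ acting by $x\mapsto\bar{x}$) is genuinely different from the paper's, which instead reduces to $n\in\{2^k,3,5\}$ via diagonalizability plus Kenku's bound $C(E)\le 8$ on isogeny classes (Corollary \ref{cor-p5}, Proposition \ref{prop-p2}), observes that abelian $\Q(E[p])$ forces two independent $p$-isogenies (impossible for $p=5$ on CM curves), and then computes $\Q(E[2^k])$ and $\Q(E[3])$ explicitly from division polynomials and Corollary \ref{cor-E4}. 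However, your key group-theoretic step contains a real error. Conjugation by $c$ sends $x$ to $\bar{x}=N(x)x^{-1}$, which is \emph{not} inversion, and its fixed points are the ``real'' elements $x=\bar{x}$, not the elements of order dividing $2$. In particular all scalars are fixed by $c$, and in the split case the full diagonal copy of $\F_p^\times$ is fixed; these subgroups have exponent far larger than $2$. So the criterion ``abelian $\Rightarrow$ every element of $G\cap C_n$ has order $\le 2$ $\Rightarrow$ $(\mathcal{O}/n\mathcal{O})^\times$ has exponent $\le 2$ $\Rightarrow$ $n\le 4$'' is not valid.

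With the correct condition ($G\cap C_n$ lies in the conjugation-fixed subgroup), the argument still does not close without a quantitative \emph{lower} bound on $|G\cap C_n|$. The determinant obstruction you could fall back on ($\det$ must surject onto $(\Z/n\Z)^\times$, while norms of real elements are squares) only kills $p\equiv 1\pmod 4$; for $p=7$ and $d_K=-7$ --- precisely a case where the CM curve has a rational $7$-isogeny --- the scalars together with the $c$-coset form an abelian group of order $12$ with surjective determinant, so nothing in your sketch rules it out. One must invoke, e.g., the class-field-theoretic bound $[K(E[7]):K]\ge |(\mathcal{O}/7\mathcal{O})^\times|/|\mathcal{O}^\times|=21>6$ to get a contradiction. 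Your stated ``classical fact'' that $[C_n:G\cap C_n]\le 2$ would supply this, but it is false exactly in the cases that matter most: for $j=0$ and $n=3$ the curve $y^2=x^3+16$ has $\Q(E[3])=\Q(\zeta_3)$, so $G\cap C_3$ is trivial and the index is $6$ (the extra units of $\Z[\zeta_3]$ intervene), and similarly the index can be $4$ for $j=1728$. So the reduction to $n\le 4$ and the determination of the groups in the table both need either a carefully stated CM index bound (accounting for the unit groups at $j=0,1728$) or the paper's isogeny-counting route; as written, neither is in place.
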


The structure of the article is as follows. In Section \ref{sec-prelim} we quote useful prior results in the literature about torsion subgroups and isogenies, which we use in Section \ref{sec-someproofs} to prove a number of intermediary results. In Section \ref{sec-cm}, we study the problem of abelian division fields for the subset of elliptic curves over $\Q$ with complex multiplication, and we prove Theorem \ref{thm-maincm0} (see Theorem \ref{thm-maincm}). In Section \ref{sec-mainproof} we prove Theorem \ref{thm-main} (see Theorems \ref{thm-main1} and \ref{thm-main2}), and in Section \ref{sec-param} we describe the (infinite) families with each combination of abelian division field and torsion structure over $\Q$. Finally, in the Appendix (a.k.a.~Section \ref{sec-appendix}) we include tables with concrete examples of elliptic curves with each possible combination of Galois group and rational torsion structure.

\begin{ack}
This project began after a conversation with J. M. Tornero about torsion points over cyclotomic fields, so we would like to thank him for the initial questions that inspired this work. We would also like to thank David Zywina for providing us with several models of modular curves. The second author would like to thank the Universidad Aut\'onoma de Madrid, where much of this work was completed during a sabbatical visit, for its hospitality. The authors would also like to thank the editors and the referee for their comments and suggestions. Finally, the authors would like to thank Tyler Genao for pointing out an error in an earlier version of the proof of Proposition \ref{prop-nonsplit}, which is fixed in this version.
\end{ack}

\section{Prior literature}\label{sec-prelim}

In this section we cover a number of results on torsion subgroups and isogenies that will be needed in the proof of Theorem \ref{thm-main}. First, we quote two results on torsion subgroups over quadratic fields.

\begin{thm}[Kenku, Momose, \cite{kenku2}; Kamienny, \cite{kami}]\label{thm-quadgroups}
Let $K/\Q$ be a quadratic field and let $E/K$ be an elliptic
curve. Then
\[
E(K)_\text{tors}\simeq
\begin{cases}
\Z/M\Z &\text{with}\ 1\leq M\leq 16\ \text{or}\ M=18,\ \text{or}\\
\Z/2\Z\oplus \Z/2M\Z &\text{with}\ 1\leq M\leq 6,\ \text{or}\\
\Z/3\Z \oplus \Z/3M\Z &\text{with}\ M=1\ \text{or}\ 2,\ \text{only if}\ K = \Q(\sqrt{-3}),\ \text{or}\\
\Z/4\Z \oplus \Z/4\Z &\text{only if}\ K = \Q(\sqrt{-1}).
\end{cases}
\]
\end{thm}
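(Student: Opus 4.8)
The plan is to translate the statement into one about rational points on modular curves and then run the Mazur--Kamienny machinery. First I would observe that if $E/K$ has a point $P$ of exact order $N$ with $[K:\Q]=2$, then $(E,P)$ is a $K$-rational point on the modular curve $X_1(N)$; similarly a subgroup isomorphic to $\Z/M\Z\oplus\Z/MN\Z$ gives a $K$-rational point on a modular curve $X_1(M,MN)$ of the appropriate level structure. Applying the nontrivial element $\sigma\in\Gal(K/\Q)$, the unordered pair $\{(E,P),(E,P)^\sigma\}$ becomes a $\Q$-rational point on the symmetric square $X_1(N)^{(2)}$, and after fixing a rational base point (say a rational cusp) this yields a point of $J_1(N)(\Q)$. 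So the problem reduces to showing that for $N$ large $X_1(N)^{(2)}(\Q)$ contains nothing beyond the ``obvious'' points (those built from $\Q$-rational points and cusps of $X_1(N)$ itself), and then separately handling the finite list of intermediate levels not already covered by Mazur's theorem over $\Q$.

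For the small levels --- those $N$ (resp. pairs $(M,MN)$) for which the relevant modular curve has genus $0$ or $1$ --- I would argue directly. Genus-$0$ modular curves are rational, so the corresponding torsion structures do occur over infinitely many quadratic fields; the genus-$1$ ones require a case-by-case check of the Mordell--Weil group and of the quadratic points. The restriction to $K=\Q(\sqrt{-3})$ in the $\Z/3\Z\oplus\Z/3M\Z$ case and to $K=\Q(\sqrt{-1})$ in the $\Z/4\Z\oplus\Z/4\Z$ case is forced by the Weil pairing: a full $m$-torsion subgroup rational over $K$ forces $\zeta_m\in K$, hence $\zeta_3\in K$ (resp. $\zeta_4\in K$), and exactly one quadratic field contains each. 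A related observation is that the genus-$2$ curves $X_1(13)$, $X_1(16)$, $X_1(18)$ are hyperelliptic and hence automatically have infinitely many quadratic points, which is precisely why the cyclic bound comes out as $M\le 16$ together with $M=18$.

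The core of the argument is the large-$N$ range, and the essential tool is Kamienny's formal immersion criterion, extending Mazur's original method. The idea is to find a quotient abelian variety $A$ of $J_1(N)$ (or of $J_1(M,MN)$) with $A(\Q)$ finite --- for instance a suitable piece of the winding/Eisenstein quotient, whose finiteness follows from analytic rank zero together with Kolyvagin--Logachev (or Gross--Zagier--Kolyvagin) --- and then to verify that the composite $X_1(N)^{(2)}\to J_1(N)\to A$ is a formal immersion at the image of a cusp modulo some prime $p$ of good reduction with $p$ small relative to $N$. When this holds, any hypothetical new quadratic point must reduce to that cusp mod $p$, and a short ramification argument then forces it to coincide with the cusp, a contradiction. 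The hard part will be carrying this out uniformly: establishing the rank-zero condition for the relevant quotients and checking the formal immersion hypothesis at a well-chosen cusp and prime for every level past the tabulated bound, and then dispatching the finitely many stubborn borderline levels (the smallish $N$ of genus $\ge 3$, such as $N=17,19,21,\dots$, and the analogous mixed levels) individually by a more explicit study of the Jacobian and its $\Q$-rational points.
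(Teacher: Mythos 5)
This statement is quoted in the paper (as Theorem~\ref{thm-quadgroups}) from Kenku--Momose and Kamienny; the paper itself gives no proof, so there is nothing internal to compare your attempt against. Judged against the actual literature, your sketch is a faithful high-level description of the right strategy: pass from a torsion point over $K$ to a $K$-point of $X_1(N)$ (or $X_1(M,MN)$), push the Galois orbit into the symmetric square $X_1(N)^{(2)}$ to get a $\Q$-point, map to a quotient of the Jacobian with finite Mordell--Weil group, and use Kamienny's formal immersion criterion at a cusp to rule out non-obvious quadratic points for large $N$; the Weil-pairing explanation for the constraints $K=\Q(\sqrt{-3})$ and $K=\Q(\sqrt{-1})$ is correct, and the remark that $X_1(13)$, $X_1(16)$, $X_1(18)$ are hyperelliptic correctly explains why those cyclic orders \emph{occur} over quadratic fields.

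That said, what you have written is a program rather than a proof, and the parts you defer are where essentially all of the content lies. The verification of the formal immersion hypothesis uniformly in $N$, the finiteness of the chosen quotient (historically Mazur's Eisenstein quotient via descent, not Kolyvagin--Logachev, though the winding-quotient variant you describe is the later Merel-style approach), and above all the individual elimination of every intermediate level $N=17,19,20,21,22,23,24,\dots$ and the mixed level structures are the substance of the two cited papers and cannot be treated as routine. Two smaller points: the Galois conjugate pair should be $\{(E,P),(E^\sigma,P^\sigma)\}$, not $\{(E,P),(E,P)^\sigma\}$ written with the same underlying curve twice; and hyperellipticity of $X_1(13)$, $X_1(16)$, $X_1(18)$ explains the \emph{presence} of $M=13,16,18$ but is not ``precisely why the bound comes out'' as stated --- the upper bound is the hard direction and comes from the non-existence arguments, not from the existence ones.
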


\begin{thm}[Najman, \cite{najman}, Theorem 2]\label{thm-najman2}
\label{thm-najman1} Let $E/\Q$ be an elliptic curve defined over $\Q$, and let $F$ be a quadratic number field. Then,
\[
E(F)_\text{tors}\simeq
\begin{cases}
\Z/M\Z &\text{with}\ 1\leq M\leq 10, \text{ or } M=12,15,\ \text{or}\ 16, \text{ or}\\
\Z/2\Z\oplus \Z/2M\Z &\text{with}\ 1\leq M\leq 6,\ \text{or}\\
\Z/3\Z \oplus \Z/3M\Z &\text{with}\ M=1\ \text{or}\ 2,\ \text{only if}\ F = \Q(\sqrt{-3}),\ \text{or}\\
\Z/4\Z \oplus \Z/4\Z &\text{only if}\ F = \Q(\sqrt{-1}).
\end{cases}
\]
\end{thm}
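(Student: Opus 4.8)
The plan is to derive the statement from the general quadratic classification of Theorem~\ref{thm-quadgroups} by exploiting the extra hypothesis that $E$ is defined over $\Q$. Comparing the two lists, the only structures permitted by Theorem~\ref{thm-quadgroups} but absent from the desired conclusion are the cyclic groups $\Z/N\Z$ with $N\in\{11,13,14,18\}$ (the field constraints attached to $\Z/3\Z\oplus\Z/3M\Z$ and to $\Z/4\Z\oplus\Z/4\Z$ carry over verbatim). Hence it suffices to prove: if $E/\Q$ is an elliptic curve, $F$ is a quadratic field, and $N\in\{11,13,14,18\}$, then $E(F)$ has no point of order $N$.

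Suppose then that $P\in E(F)$ has order $N$, let $\sigma$ generate $\Gal(F/\Q)$, and set $G=\langle P,P^\sigma\rangle\subseteq E(F)$. Since $P$ is $F$-rational and $[F:\Q]=2$, its Galois orbit lies in $\{P,P^\sigma\}$, so $\Gal(\overline\Q/\Q)$ permutes this set and therefore stabilizes $G$; thus $G$ is defined over $\Q$, and, being contained in $E[N]$, we have $G\cong\Z/a\Z\oplus\Z/N\Z$ for some divisor $a\mid N$. If $a=N$ then $E[N]\subseteq E(F)$, forcing $\Q(\zeta_N)\subseteq F$ and hence $\varphi(N)\le 2$, which is false for all four values of $N$. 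If $1<a<N$ then $\Z/a\Z\oplus\Z/N\Z\subseteq E(F)_\text{tors}$ contradicts Theorem~\ref{thm-quadgroups}: for $N=14$ one has $a\in\{2,7\}$, and neither $\Z/7\Z\oplus\Z/14\Z$ nor $\Z/2\Z\oplus\Z/14\Z$ occurs (the latter because $\Z/2\Z\oplus\Z/2M\Z$ requires $M\le 6$); for $N=18$ one has $a\in\{2,3,6,9\}$, and none of $\Z/2\Z\oplus\Z/18\Z$, $\Z/3\Z\oplus\Z/18\Z$, $\Z/6\Z\oplus\Z/18\Z$, $\Z/9\Z\oplus\Z/18\Z$ is allowed; and for $N=11,13$ the case $1<a<N$ is vacuous.

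There remains the case $a=1$, i.e.\ $G=\langle P\rangle$ is a $\Gal(\overline\Q/\Q)$-stable cyclic subgroup of order $N$, on which Galois acts through a character $\chi\colon\Gal(\overline\Q/\Q)\to(\Z/N\Z)^\times$. Since $\Gal(\overline\Q/F)$ fixes $P$, the character $\chi$ factors through $\Gal(F/\Q)$, so $\operatorname{im}\chi$ has order at most $2$. For each $N\in\{11,13,14,18\}$ the group $(\Z/N\Z)^\times$ is cyclic (these $N$ are of the form $p^k$ or $2p^k$), so its only subgroup of order dividing $2$ is $\{\pm1\}$; hence $\chi$ is a quadratic character, and if it is nontrivial it is precisely the character cutting out $F$. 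If $\chi$ is trivial then $P\in E(\Q)$, which is impossible by Mazur's theorem (no $E/\Q$ has a rational point of order $11,13,14$, or $18$). If $\chi$ is nontrivial, write $F=\Q(\sqrt d)$, let $E^{d}/\Q$ be the quadratic twist, and let $\iota\colon E\xrightarrow{\ \sim\ }E^{d}$ be an isomorphism defined over $F$ with $\iota^\tau=\chi(\tau)\,\iota$ for all $\tau$. Then $P^\sigma=-P$, so for every $\tau$ we get $\iota(P)^\tau=\iota^\tau(P^\tau)=\chi(\tau)\,\iota(\chi(\tau)P)=\chi(\tau)^2\iota(P)=\iota(P)$, i.e.\ $\iota(P)\in E^{d}(\Q)$ is a point of order $N$ --- again impossible by Mazur's theorem. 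This contradiction completes the proof.

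The delicate point, and the reason the statement is genuinely stronger than Theorem~\ref{thm-quadgroups}, is the last case: it rests on the fact that for $N\in\{11,13,14,18\}$ the $2$-torsion of $(\Z/N\Z)^\times$ collapses to $\{\pm1\}$, so a Galois-stable rational cyclic subgroup of order $N$ bearing a quadratic point can be untwisted into an honest rational point and killed by Mazur's theorem. For $N=8,12,15,16$ the unit group $(\Z/N\Z)^\times$ carries additional $2$-torsion and this untwisting is unavailable, consistent with those values surviving in the classification.
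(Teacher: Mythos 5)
Your proposal is correct. Note first that the paper does not prove this statement at all: it is quoted verbatim from \cite{najman}, so there is no internal proof to compare against. What you have supplied is a self-contained derivation of Najman's classification from the Kenku--Momose--Kamienny list (Theorem \ref{thm-quadgroups}) together with Mazur's theorem on rational torsion, and the argument holds up: the list comparison correctly isolates $N\in\{11,13,14,18\}$ as the only discrepancies; the group $\langle P,P^\sigma\rangle$ is indeed $\Gal(\overline{\Q}/\Q)$-stable and of the form $\Z/a\Z\oplus\Z/N\Z$; the cases $a=N$ (Weil pairing) and $1<a<N$ (no listed group has a non-cyclic subgroup of exponent divisible by $14$ or $18$) are disposed of correctly; and in the crucial case $a=1$ the observation that $(\Z/N\Z)^\times$ is cyclic for these four values, so that the character $\chi$ is forced into $\{\pm1\}$ and the point descends to a rational point of order $N$ on the quadratic twist, is exactly the right mechanism --- this is the same untwisting device the paper itself uses in Lemma \ref{lem-twists}, and it is essentially how Najman's own argument proceeds for the cyclic cases. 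The only external input beyond Theorem \ref{thm-quadgroups} is Mazur's theorem that no elliptic curve over $\Q$ has a rational point of order $11,13,14$, or $18$; the paper cites \cite{mazur1} but never states this torsion theorem explicitly, so strictly speaking you are importing one more standard result than the quoted hypotheses provide. Your closing remark correctly explains why the method cannot (and should not) eliminate $N=12,15,16$. A clean and economical proof of a result the paper takes on faith.
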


The work of Laska, Lorenz, and Fujita, classifies all the possible torsion structures that can occur for elliptic curves over $\Q$ when base-changed to a polyquadratic number field (i.e.,~a number field contained in the maximal elementary $2$-abelian extension of $\Q$, which we will call $\Q(2^\infty)$ below). We quote their results next.

\begin{thm}[Laska, Lorenz, \cite{laska}; Fujita, \cite{fujita}]\label{thm-fujita} Let $E/\Q$ be an elliptic curve defined over $\Q$, and let 
$$\Q(2^\infty) = \Q(\{\sqrt{m}: m\in\Z\}).$$
Then, the torsion subgroup $E(\Q(2^\infty))_\text{tors}$ is finite, and it is isomorphic to one of the following groups:
\[
E(\Q(2^\infty))_\text{tors}\simeq
\begin{cases}
\Z/M\Z, &\text{with}\ M=1,3,5,7,9,\text{ or } M=15, \text{ or}\\
\Z/2\Z\oplus \Z/2M\Z &\text{with}\ 1\leq M\leq 6,\text{ or } M=8, \text{ or}\\
\Z/3\Z\oplus \Z/3\Z & \text{ or}\\
\Z/4\Z\oplus \Z/4M\Z &\text{with}\ 1\leq M\leq 4,\text{ or}\\
\Z/2M\Z\oplus \Z/2M\Z &\text{with}\ 3\leq M\leq 4.\\
\end{cases}
\]
\end{thm}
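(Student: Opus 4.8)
The plan is to split the statement into three pieces: a prime-by-prime bound on the torsion, a separate (and sharper) bound on the $2$-primary part, and a construction of explicit families realizing each surviving group. Throughout, write $L=\Q(2^\infty)$ and observe that $\Gal(L/\Q)$ is a pro-$2$ group of exponent $2$; hence for every torsion point $P\in E(L)$ the field $\Q(P)$ is \emph{multiquadratic}, i.e. $\Q(P)/\Q$ is Galois with $\Gal(\Q(P)/\Q)$ of exponent dividing $2$, and in particular $[\Q(P):\Q]$ is a power of $2$. This single constraint drives everything.

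\emph{Primes dividing the torsion.} Let $P\in E(L)$ have prime order $\ell$. The $\Gal(\overline{\Q}/\Q)$-submodule of $E[\ell]$ generated by the Galois orbit of $P$ is either $\langle P\rangle$ or all of $E[\ell]$. If it equals $E[\ell]$, then two Galois conjugates of $P$ form an $\F_\ell$-basis, both lie in $E(L)$, so $\Q(E[\ell])\subseteq L$ and $\Gal(\Q(E[\ell])/\Q)$ is abelian of exponent at most $2$; since $\det\rho_{E,\ell}$ is the mod-$\ell$ cyclotomic character, which is surjective onto $(\Z/\ell\Z)^\times$, we get $\ell-1\mid 2$, so $\ell\in\{2,3\}$. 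If the orbit generates $\langle P\rangle$, then $E$ has a rational $\ell$-isogeny with kernel $\langle P\rangle$, and $\Gal(\Q(P)/\Q)$, being cyclic of order dividing $\ell-1$ and of exponent at most $2$, has order at most $2$; thus the mod-$\ell$ isogeny character $\chi$ satisfies $\chi^2=1$. By Mazur's theorem a rational $\ell$-isogeny forces $\ell\le 19$ or $\ell\in\{37,43,67,163\}$, and for $\ell\ge 11$ the relation $\chi\cdot\chi'=\chi_\ell$ coming from the Weil pairing (with $\chi'$ the isogeny character of $E/\langle P\rangle$), the ramification of $\chi$ at $\ell$, and a direct check of the finitely many non-cuspidal points of $X_0(\ell)(\Q)$ rule out $\chi^2=1$. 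Hence every prime dividing $|E(L)_{\mathrm{tors}}|$ lies in $\{2,3,5,7\}$. Applying the same dichotomy to $E[\ell^k]$ and invoking Kenku's classification of rational cyclic isogenies (together with a check of the finitely many curves with a rational $27$-isogeny) bounds the $3$-, $5$- and $7$-primary parts by exponent $9$, $5$ and $7$.

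\emph{The $2$-primary part and the final list.} To bound $E(L)[2^\infty]$, put $G=\rho_{E,2^\infty}(\Gal(\overline{\Q}/\Q))\subseteq\GL(2,\Z_2)$ and let $H$ be the image of $\Gal(\overline{\Q}/L)$; since $G/H$ is abelian of exponent $2$, $H$ contains the closure of the subgroup generated by all squares and commutators of $G$, and $E(L)[2^\infty]=E[2^\infty]^H$. Bounding this fixed module reduces to the known classification of $2$-adic Galois images of elliptic curves over $\Q$ (equivalently, rational points on the modular curves $X_1(2^k)$ and $X_1(2,2^k)$), and gives a $2$-part contained in $\Z/4\Z\oplus\Z/16\Z$ or $\Z/8\Z\oplus\Z/8\Z$. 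Combining all the prime-power bounds with the Weil-pairing constraint that $\Z/N\Z\oplus\Z/N\Z\subseteq E(L)$ forces $\zeta_N\in L$, hence $(\Z/N\Z)^\times$ of exponent at most $2$ (so $N\mid 24$), and with the finer incompatibilities between two independent torsion points that must \emph{simultaneously} generate multiquadratic fields — which is what excludes, e.g., $\Z/14\Z$, by reducing such a curve to a rational point on a modular curve of genus $\ge 1$ such as $X_0(14)$ — leaves exactly the groups in the statement.

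\emph{Realization and the main obstacle.} For each group $T$ in the list one then produces a one-parameter family $E_t/\Q$ with $E_t(L)_{\mathrm{tors}}\cong T$: the relevant moduli problem corresponds to a modular curve of genus $0$ (or genus $1$ of positive rank over the appropriate multiquadratic base), and pulling back its rational points gives the family, whose torsion is checked to be no larger using the bounds above; this is carried out via Fujita's explicit parametrizations, and simultaneously shows each group occurs for infinitely many $j$. The main obstacle is the boundary analysis at $\ell\in\{2,3,5,7\}$ and, above all, the determination of the $2$-primary part: because $L$ is built from square roots, $2$-power torsion can be unexpectedly large, and one cannot simply invoke uniform boundedness over a fixed-degree field, so the sharp bound must come from the somewhat delicate structure of $2$-adic Galois images for $E/\Q$, including their interaction with the odd torsion (for instance, why $\Z/8\Z\oplus\Z/8\Z$ forces all odd torsion to be trivial, while $\Z/3\Z\oplus\Z/3\Z$ forces the $2$-part to be trivial). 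It is precisely this case analysis that makes the result a theorem of Laska--Lorenz and Fujita rather than a short exercise.
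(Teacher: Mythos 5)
The paper does not prove Theorem \ref{thm-fujita}; it is quoted as a known result of Laska--Lorenz \cite{laska} and Fujita \cite{fujita}, so there is no internal proof to measure your attempt against. Judged on its own terms, your outline has the right architecture: the observation that every torsion point over $L=\Q(2^\infty)$ generates a multiquadratic field, the resulting dichotomy for $E(L)[\ell]$ (either a rational $\ell$-isogeny whose character $\chi$ satisfies $\chi^2=1$, or $\Q(E[\ell])\subseteq L$, which via the surjectivity of the mod-$\ell$ cyclotomic character forces $\ell-1\mid 2$), and the reduction of the $2$-primary part to the classification of $2$-adic images. This is essentially how one would organize a modern proof. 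But as written it is a strategy, not a proof: every decisive case analysis is deferred with ``one checks'' or ``reduces to the known classification.''

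Concretely. (i) For $\ell=13$ your appeal to ``the finitely many non-cuspidal points of $X_0(\ell)(\Q)$'' fails: $X_0(13)$ has genus $0$ and infinitely many non-cuspidal rational points. The correct exclusion is that the mod-$13$ isogeny character always has order $3$, $4$, $6$, or $12$ (see the tables in \cite{lozano0}); since $\chi$ is cyclic, even the order-$4$ case produces a cyclic quartic field $\Q(P)$, which is not multiquadratic. Your criterion $\chi^2=1$ is the right one, but the justification offered does not establish it here. (ii) $X_0(14)(\Q)$ contains non-cuspidal points (the CM curves with $j=-3^3\cdot 5^3$ and $3^3\cdot 5^3\cdot 17^3$ in Table 1 have rational $14$-isogenies), so ``reducing to a rational point on $X_0(14)$'' excludes nothing by itself; one must examine those specific $j$-invariants and show they do not acquire $\Z/2\Z\oplus\Z/14\Z$ over $L$. (iii) The passage from bounding $E(L)[\ell]$ to bounding $E(L)[\ell^k]$ for odd $\ell$ is not ``the same dichotomy'': a point of order $9$ or $25$ in $E(L)$ does not directly hand you a rational cyclic $9$- or $25$-isogeny, and this step needs an actual argument — this is where \cite{fujita} does real work. (iv) The determination of the $2$-primary part, which you yourself identify as the heart of the matter (and which is the part this paper's own Theorem \ref{thm-2torsab} handles for the related abelian-image question by an exhaustive computation over the $1208$ groups of \cite{rouse}), is asserted in a single sentence, as are the interaction statements such as ``$\Z/8\Z\oplus\Z/8\Z$ forces trivial odd torsion.'' So the proposal is a sound plan with genuine gaps located at exactly the steps you flag as the main obstacles.
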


The following result is a criterion to decide whether a given point is twice another point over the same field (see \cite[Theorem 4.2]{knapp}).

\begin{prop}\label{prop-knapp} Let $E/K$ be an elliptic curve defined over a number field $K$, given by 
$$E:y^2 = (x-\alpha)(x-\beta)(x-\gamma)$$
with pairwise distinct $\alpha,\beta,\gamma\in K$. For $P=(x_0,y_0)\in E(K)$, there exists $Q\in E(K)$ such that $2Q=P$ if and only if $x_0-\alpha$, $x_0-\beta$, and $x_0-\gamma$ are all squares in $K$. 
\end{prop}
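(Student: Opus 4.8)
The plan is to work directly with the explicit $2$-descent / duplication formulas for the curve $E:y^2=(x-\alpha)(x-\beta)(x-\gamma)$ over $K$. First I would record the elementary but crucial observation that the three differences $x_0-\alpha$, $x_0-\beta$, $x_0-\gamma$ multiply to $y_0^2$, a square in $K$; hence it is equivalent to say that any two of them are squares, or that all three are. This symmetry means we may verify the criterion by tracking just the classes of $x_0-\alpha$ and $x_0-\beta$ in $K^\times/(K^\times)^2$.

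For the direction ($\Leftarrow$), suppose $x_0-\alpha$, $x_0-\beta$, $x_0-\gamma$ are all squares in $K$, say equal to $u^2,v^2,w^2$ with $u,v,w\in K$ (and signs chosen so that $uvw=y_0$). I would then exhibit $Q=(x_1,y_1)\in E(K)$ with $2Q=P$ explicitly: set $x_1=\alpha+(u+?)(\dots)$ — more cleanly, recall that the three "square root" quantities determine the $x$-coordinate of a halving point via a symmetric formula such as $x_1 = uv+uw+vw+x_0$ (up to a sign choice in one of $u,v,w$), and verify by the duplication formula that doubling this point returns $(x_0,y_0)$. The point is that with all three square roots available in $K$, the standard formula for $x([2]^{-1}P)$ has all its ingredients in $K$, and one checks $x_1-\alpha=(v+w)^2$, $x_1-\beta=(u+w)^2$, $x_1-\gamma=(u+v)^2$ are squares (so $2Q=\pm P$, and adjusting the sign of $y_1$ gives $+P$).

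For the direction ($\Rightarrow$), suppose $Q=(x_1,y_1)\in E(K)$ satisfies $2Q=P$. The cleanest route is the connecting map of the descent exact sequence
\[
0\to E(K)/2E(K)\to K^\times/(K^\times)^2\times K^\times/(K^\times)^2,\qquad P\mapsto\big(x_0-\alpha,\ x_0-\beta\big)
\]
(defined on points with $x_0\ne\alpha,\beta,\gamma$, with the usual modifications at the $2$-torsion). Since $P=2Q\in 2E(K)$ lies in the kernel of this homomorphism, its image $(x_0-\alpha,\ x_0-\beta)$ is trivial in $K^\times/(K^\times)^2\times K^\times/(K^\times)^2$, i.e.\ $x_0-\alpha$ and $x_0-\beta$ are squares in $K$; by the product relation $x_0-\gamma$ is a square as well. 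Alternatively, and to keep the argument self-contained, one can just take the explicit duplication formula for the $x$-coordinate of $2Q$, rearrange $x_0 - \alpha = x(2Q)-\alpha$ as a rational expression in $x_1,y_1$, and recognize it as a perfect square in $K$ — the identity $x(2Q)-\alpha = \left(\dfrac{(x_1-\beta)(x_1-\gamma)-y_1\cdot(\text{something})}{2y_1}\right)^2$ type factorization — and symmetrically for $\beta,\gamma$.

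The main obstacle is purely bookkeeping: getting the sign conventions consistent across the three square roots so that $uvw=y_0$ rather than $-y_0$, and handling the degenerate cases where $P$ is a $2$-torsion point (so some $x_0-\alpha=0$) or where $y_1=0$; in those cases the descent map must be interpreted with its standard correction terms, and one checks the statement separately using $E[2]\subseteq E(K)$. Everything else is a direct, if slightly tedious, manipulation of the duplication formula, so I would relegate the explicit algebra to a short computation and emphasize the descent-sequence viewpoint as the conceptual reason the criterion holds.
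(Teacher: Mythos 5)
The paper offers no proof of this proposition --- it is quoted verbatim from Knapp (\cite{knapp}, Theorem 4.2) --- so there is nothing internal to compare against; your outline is the standard argument, and essentially Knapp's. But two steps need repair. First, the key identities you assert in the ($\Leftarrow$) direction are wrong: with $u^2=x_0-\alpha$, $v^2=x_0-\beta$, $w^2=x_0-\gamma$ and $x_1=x_0+uv+uw+vw$, one gets
\[
x_1-\alpha=(u+v)(u+w),\qquad x_1-\beta=(u+v)(v+w),\qquad x_1-\gamma=(u+w)(v+w),
\]
not $(v+w)^2$, $(u+w)^2$, $(u+v)^2$. These three quantities are in general \emph{not} individual squares (if they were, $Q$ itself would lie in $2E(K)$, which the proposition does not claim); all that is needed is that their product $\bigl((u+v)(u+w)(v+w)\bigr)^2$ is a square, so that $y_1\in K$ and $Q=(x_1,y_1)\in E(K)$. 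Moreover your parenthetical ``so $2Q=\pm P$'' is a non sequitur from squareness; it must come from the duplication formula, e.g.
\[
x(2Q)-\alpha=\left(\frac{(x_1-\alpha)^2-(\alpha-\beta)(\alpha-\gamma)}{2y_1}\right)^2=\left(\frac{2u(u+v)(u+w)(v+w)}{2(u+v)(u+w)(v+w)}\right)^2=u^2=x_0-\alpha,
\]
and symmetrically for $\beta,\gamma$, whence $x(2Q)=x_0$ and $2Q=\pm P$. (Pairwise distinctness of $\alpha,\beta,\gamma$ forces $u+v$, $u+w$, $v+w$ all nonzero, so $y_1\neq 0$ and the degenerate cases you worry about do not actually arise here.)

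Second, in the ($\Rightarrow$) direction the appeal to the descent exact sequence is circular as written: the statement that $2E(K)$ lies in the kernel of the connecting map $P\mapsto(x_0-\alpha,\,x_0-\beta)$ \emph{is} the implication you are proving, so it cannot simply be quoted; you would have to establish the homomorphism property and compute the kernel, which is the whole content of $2$-descent. Your self-contained alternative is the right fix: the identity $x(2Q)-\alpha=\left(\frac{(x_1-\alpha)^2-(\alpha-\beta)(\alpha-\gamma)}{2y_1}\right)^2$ is a polynomial identity valid over $K$, and it immediately exhibits $x_0-\alpha$ as a square of an element of $K$ whenever $P=2Q$ with $Q\in E(K)$ (note $y_1\neq 0$ automatically, since $P\neq\mathcal{O}$). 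With these two repairs the argument is complete.
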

As a corollary of the previous criterion, we deduce a description of the $4$-torsion of an elliptic curve.
\begin{cor}\label{cor-E4}
 Let $E/F$ be an elliptic curve defined over a number field $F$, given by 
$$E:y^2 = (x-\alpha)(x-\beta)(x-\gamma)$$
with $\alpha,\beta,\gamma\in \overline{F}$. Then,
$$F(E[4])=F\left(\sqrt{\pm(\alpha-\beta)},\sqrt{\pm(\alpha-\gamma)},\sqrt{\pm(\beta-\gamma)}\right)=F\left(\sqrt{-1},\sqrt{\alpha-\beta},\sqrt{\alpha-\gamma},\sqrt{\beta-\gamma}\right).$$
\end{cor}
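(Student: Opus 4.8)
The plan is to apply Proposition \ref{prop-knapp} to locate the $4$-torsion points as halves of $2$-torsion points, and then read off the field they generate. Recall that $E[2]$ consists of $O$ together with the three points $(\alpha,0)$, $(\beta,0)$, $(\gamma,0)$, all defined over $F(\alpha,\beta,\gamma)$. A point $Q\in E(\overline F)$ lies in $E[4]\setminus E[2]$ precisely when $2Q$ is one of these three nonzero $2$-torsion points. So I would fix, say, $P=(\alpha,0)$ and ask over which extension $L/F(\alpha,\beta,\gamma)$ there is a $Q\in E(L)$ with $2Q=P$. By Proposition \ref{prop-knapp} (applied over $L$), such a $Q$ exists iff $\alpha-\alpha=0$, $\alpha-\beta$, and $\alpha-\gamma$ are all squares in $L$; the first is automatic, so the condition is that $\alpha-\beta$ and $\alpha-\gamma$ be squares in $L$. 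Doing the same for $P=(\beta,0)$ gives the condition that $\beta-\alpha$ and $\beta-\gamma$ be squares, and for $P=(\gamma,0)$ that $\gamma-\alpha$ and $\gamma-\beta$ be squares.

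Next I would assemble these conditions. Since $\alpha-\beta=-(\beta-\alpha)$, requiring both $\alpha-\beta$ and $\beta-\alpha$ to be squares forces $-1$ to be a square as well; conversely, once $\sqrt{-1}$ and $\sqrt{\alpha-\beta}$ are available, so is $\sqrt{\pm(\alpha-\beta)}$, and likewise for the other two differences. Hence adjoining square roots of all six quantities $\pm(\alpha-\beta)$, $\pm(\alpha-\gamma)$, $\pm(\beta-\gamma)$ is the same as adjoining $\sqrt{-1}$, $\sqrt{\alpha-\beta}$, $\sqrt{\alpha-\gamma}$, $\sqrt{\beta-\gamma}$, which establishes the second claimed equality. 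For the first equality, I would argue that $F(E[4])$ is exactly the compositum over all three choices of $P$ of the fields that halve $P$: on one hand $F(E[4])$ contains a full set of halves of each $(\alpha,0),(\beta,0),(\gamma,0)$, so it contains all the square roots listed; on the other hand, once all three of $\alpha-\beta,\alpha-\gamma,\beta-\gamma$ and $\sqrt{-1}$ are adjoined, Proposition \ref{prop-knapp} guarantees that each nonzero $2$-torsion point is divisible by $2$, producing points $Q_1,Q_2,Q_3$ of order $4$; since $Q_1$ and $Q_2$ (with $2Q_1=(\alpha,0)$, $2Q_2=(\beta,0)$) generate $E[4]$ as a group, the field we have built already contains $E[4]$. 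One should also note that $F(\alpha,\beta,\gamma)\subseteq F(\sqrt{\alpha-\beta},\sqrt{\alpha-\gamma},\sqrt{\beta-\gamma},\sqrt{-1})$ is not literally needed because the $x$-coordinates of the $4$-torsion points involved are of the form $\alpha+\sqrt{(\alpha-\beta)(\alpha-\gamma)}$, which already lies in that field; I would include a sentence checking that the explicit halving formulas keep us inside the stated field so that the reverse inclusion is clean.

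The only genuinely delicate point is making sure the two inclusions match up exactly, i.e.\ that nothing more than these four square roots is required and that together they suffice to split all of $E[4]$, not just the $x$-coordinates. The containment "$\subseteq$" is immediate from Proposition \ref{prop-knapp} applied inside $F(E[4])$ itself. For "$\supseteq$", the subtlety is that Proposition \ref{prop-knapp} produces a point $Q$ with $2Q=P$ but a priori only guarantees $Q\in L$ where $L$ already contains the requisite square roots; one then checks that two such points, for two independent choices of $P$, span $E[4]$, and that the coordinates of all eight points of order $4$ lie in $L$ (the third point $(\gamma,0)$ and its halves are then automatic since $(\gamma,0)=(\alpha,0)+(\beta,0)$ and its square-root conditions $\gamma-\alpha,\gamma-\beta$ are squares in $L$ thanks to $\sqrt{-1}$). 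This is the step I expect to write most carefully; everything else is a direct translation of the halving criterion.
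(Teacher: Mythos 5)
Your proposal is correct and takes essentially the same approach as the paper: the paper's entire proof is the one-line remark that the corollary ``follows directly from Proposition \ref{prop-knapp} and the fact that $2E[4]=E[2]=\{\mathcal{O},(\alpha,0),(\beta,0),(\gamma,0)\}$,'' and your argument is precisely the fleshed-out version of that, applying the halving criterion to each nonzero $2$-torsion point in both directions and noting that the $\pm$ signs amount to adjoining $\sqrt{-1}$.
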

\begin{proof}
This follows directly from Proposition \ref{prop-knapp}, and the fact that $$2E[4]=E[2]=\{\mathcal{O},(\alpha,0),(\beta,0),(\gamma,0)\}.
$$
\end{proof}

We will also use the following result of Kenku on the maximum number of isogenous curves over $\Q$ in an isogeny class. The notation $C(E)$ and $C_p(E)$ introduced in the statement below will be used several times in our proofs.

\begin{thm}[Kenku, \cite{kenku}]\label{thm-kenku} There are at most eight $\Q$-isomorphism classes of elliptic curves in each $\Q$-isogeny class. More concretely, let $E/\Q$ be an elliptic curve, define $C(E)$ as the number of distinct $\Q$-rational cyclic subgroups of $E$ (including the identity subgroup), and let $C_p(E)$ be the same as $C(E)$ but only counting cyclic subgroups of order a power of $p$, for each prime $p$. Then, $C(E)=\prod_p C_p(E)\leq 8$. Moreover, each factor $C_p(E)$ is bounded as follows:
\begin{center}
\begin{tabular}{c|cccccccccccc}
$p$ & $2$ & $3$ & $5$ & $7$ & $11$ & $13$ & $17$ & $19$ & $37$ & $43$ & $67$ & $163$\\
\hline 
$C_p\leq $ & $8$ & $4$ & $3$ & $2$ & $2$ & $2$ & $2$ & $2$ & $2$ & $2$ & $2$ & $2$.
\end{tabular}
\end{center}
\end{thm}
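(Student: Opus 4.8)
The plan is to reduce Kenku's bound to the arithmetic of the modular curves $X_0(N)$, after first establishing the multiplicativity of $C(E)$ by a direct group-theoretic argument. If $C\leq E$ is a cyclic subgroup of order $N=\prod_p p^{e_p}$, then $C$ decomposes canonically as $C=\bigoplus_p C[p^\infty]$ with each $C[p^\infty]$ cyclic of order $p^{e_p}$, and $C$ is $\Q$-rational (that is, stable under $\Gal(\overline{\Q}/\Q)$) if and only if every $p$-primary piece $C[p^\infty]$ is. This sets up a bijection between $\Q$-rational cyclic subgroups of $E$ and tuples $(C_p)_p$ of $\Q$-rational cyclic $p$-power subgroups, one for each prime, which simultaneously proves $C(E)=\prod_p C_p(E)$ and reduces the inequality $C(E)\leq 8$ to two tasks: (a) bounding each local factor $C_p(E)$, and (b) showing that several large local factors cannot occur at the same time.

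For (a), a $\Q$-rational cyclic subgroup of order $p^k$ in $E$ amounts to a non-cuspidal $\Q$-rational point on the modular curve $X_0(p^k)$, so the question becomes: for which prime powers does $X_0(p^k)$ carry non-cuspidal rational points, and how many such subgroups can a single curve support? For every prime $p\notin\{2,3,5,7,11,13,17,19,37,43,67,163\}$, Mazur's theorem on the Eisenstein ideal shows that $X_0(p)(\Q)$ consists only of cusps, whence $C_p(E)=1$. For the primes in the list I would argue prime power by prime power: $X_0(4)$, $X_0(8)$, $X_0(16)$, $X_0(9)$ and $X_0(25)$ all have genus $0$, whereas $X_0(27)$, $X_0(32)$, $X_0(49)$, $X_0(125)$, $X_0(p^2)$ for $p\geq 11$, and the higher prime-power levels all have positive genus and carry no non-cuspidal rational points apart from finitely many CM ones. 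Combining these facts with a direct analysis of the shape of the $p$-power isogeny graph attached to a fixed curve (a finite graph in which each vertex has bounded valence) yields the stated bounds $C_2\leq 8$, $C_3\leq 4$, $C_5\leq 3$, and $C_p\leq 2$ for $p\in\{7,11,13,17,19,37,43,67,163\}$.

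For (b), I would rule out the incompatible combinations --- for instance $C_2=8$ together with some $C_p\geq 2$, or $C_3\geq 3$ together with $C_q\geq 2$, and so on --- by checking that the corresponding composite-level modular curves $X_0(N)$, with $N$ a product such as $2^a q$, $3\cdot 5$, or $3\cdot 7$, have no non-cuspidal rational points; this is exactly the content of Kenku's analysis of composite-level modular curves, building on the work of Mazur and Momose. Tallying the surviving cases gives $\prod_p C_p(E)\leq 8$, and since every curve $\Q$-isogenous to $E$ arises as a quotient of $E$ by a $\Q$-rational subgroup, an analysis of the same isogeny graph bounds the number of $\Q$-isomorphism classes in the isogeny class by $8$ as well. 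The main obstacle is transparent: the genus-$0$ cases are elementary, but everything hinges on controlling rational points on the positive-genus curves $X_0(N)$ at prime-power and composite level, which relies on Mazur's Eisenstein-ideal method together with its refinements and admits no real shortcut.
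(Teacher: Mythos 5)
This is a statement the paper quotes from the literature (Kenku's 1982 theorem) in its ``Prior literature'' section; the paper supplies no proof of its own, so there is nothing internal to compare against. Your proposal correctly reconstructs the architecture of the standard argument: the $p$-primary decomposition of a cyclic subgroup is canonical and hence Galois-equivariant, which gives the bijection underlying $C(E)=\prod_p C_p(E)$ and is a complete argument for that part; the reduction of each local bound to non-cuspidal rational points on $X_0(p^k)$, with Mazur's theorem disposing of all primes outside the list, is also the right mechanism, as is the observation that two independent $p$-isogenies on $E$ produce a cyclic $p^2$-isogeny and hence a point on $X_0(p^2)$ (which is how one gets $C_p\leq 2$ for $p\geq 7$ and $C_5\leq 3$). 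Two caveats. First, essentially all of the substance --- the determination of $X_0(N)(\Q)$ at positive-genus prime-power and composite levels, and the combinatorial analysis of the $2$-power isogeny graph needed for $C_2\leq 8$ --- is deferred to ``Kenku's analysis,'' which is circular as a proof of Kenku's theorem; as a map of the proof it is accurate, but no hard step is actually carried out. Second, your blanket assertion that the relevant composite-level curves ``have no non-cuspidal rational points'' is too strong as stated: several of them do (e.g.\ $X_0(14)$, $X_0(15)$, $X_0(21)$, $X_0(27)$ all appear in Theorem \ref{thm-ratnoncusps} of the paper), and the correct statement is that their finitely many non-cuspidal points realize only combinations of local factors with product at most $8$; the exclusions one actually needs are for levels such as $24$, $32$, $48$, or $3\cdot 5\cdot 2$, where the rational points are indeed all cuspidal. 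With those two points understood, the outline is sound and faithful to how the result is actually proved.
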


The next result we quote describes the possible isomorphism types of $\Gal(\Q(E[p])/\Q)$. In particular, fix a $\Z/p\Z$-basis of $E[p]$, and let $\rho_{E,p}\colon \Gal(\overline{\Q}/\Q)\to \GL(E[p])\cong \GL(2,\Z/p\Z)$ be the representation associated to the natural action of Galois on $E[p]$, with respect to the chosen basis of $E[p]$. Then, $\Gal(\Q(E[p])/\Q)\cong \rho_{E,p}(\GQ)\subseteq \GL(E[p])\cong \GL(2,\Z/p\Z)$.

\begin{thm}[Serre, \cite{serre1}]\label{thm-serre2}
Let $E/\Q$ be an elliptic curve. Let $G$ be the image of $\rho_{E,p}$, and suppose $G\neq \GL(E[p])$. Then, there is a $\Z/p\Z$-basis of $E[p]$ such that one of the following possibilities holds:
\begin{enumerate}
\item $G$ is contained in the normalizer of a split Cartan subgroup of $\GL(E[p])$, or 
\item $G$ is contained in the normalizer of a non-split Cartan subgroup of $\GL(E[p])$, or 
\item The projective image of $G$ in $\PGL(E[p])$ is isomorphic to $A_4$, $S_4$ or $A_5$, where $S_n$ is the symmetric group and $A_n$ the alternating group, or 
\item $G$ is contained in a Borel subgroup of $\GL(E[p])$.
\end{enumerate}
\end{thm}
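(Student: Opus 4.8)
This is a theorem of Serre (\cite{serre1}, \S2), and I would reproduce his argument, whose substance is Dickson's classification of the subgroups of $\PGL(2,\F_p)$. The one arithmetic input is the Weil pairing (\cite{silverman}, III, Corollary 8.1.1): it gives $\det\circ\,\rho_{E,p}=\chi_p$, the mod-$p$ cyclotomic character, which is surjective onto $(\Z/p\Z)^\times$ because $[\Q(\zeta_p):\Q]=p-1$. Thus $\det(G)=(\Z/p\Z)^\times$ throughout.

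Let $Z$ be the subgroup of scalar matrices in $\GL(E[p])$ and let $\overline G$ be the image of $G$ in $\PGL(E[p])=\GL(E[p])/Z$. By Dickson's theorem, up to conjugacy $\overline G$ is one of: (a) cyclic; (b) dihedral; (c) isomorphic to $A_4$, $S_4$, or $A_5$; (d) contained in the image $\overline B$ of a Borel subgroup; or (e) all of $\PSL(2,\F_p)$ or of $\PGL(2,\F_p)$. These translate back to $G$ as follows. Case (c) is exactly alternative (3). In case (d), $G$ lies in the full preimage of $\overline B$, which is a Borel subgroup of $\GL(E[p])$, so we are in alternative (4). For (a): every nonidentity element of $\PGL(2,\F_p)$ is either semisimple, with two fixed points on $\PP^1$ that are defined over $\F_p$ or form a conjugate pair over $\F_{p^2}$, or else of order $p$ with a single rational fixed point; hence a cyclic $\overline G$ either has order $p$ and lies in a Borel (alternative (4) again), or fixes a pair of points and so lies in a split, respectively non-split, Cartan subgroup, whence $G$ lies in that Cartan and a fortiori in its normalizer, which is alternative (1) or (2). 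Case (b) is the same, the extra involution either permuting the two fixed points of the cyclic part --- so lying in the normalizer of the relevant Cartan --- or producing a copy of $\Z/p\Z\rtimes\{\pm1\}$ inside a Borel; either way we obtain alternative (1), (2), or (4). The four alternatives are not claimed to be mutually exclusive, so recording the ``reducible'' Dickson cases under (4) is harmless.

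It remains to exclude case (e). Suppose $\overline G\supseteq\PSL(2,\F_p)$; I claim $G\supseteq\SL(E[p])$. The commutator subgroup $[G,G]$ has trivial determinant, hence $[G,G]\subseteq G\cap\SL(E[p])$; and for $p\geq5$ the group $\PSL(2,\F_p)$ is perfect, so the image of $[G,G]$ in $\PGL(E[p])$, namely $[\overline G,\overline G]$, contains $[\PSL(2,\F_p),\PSL(2,\F_p)]=\PSL(2,\F_p)$. Therefore the image of $G\cap\SL(E[p])$ in $\PGL(E[p])$ is all of $\PSL(E[p])$, so $G\cap\SL(E[p])$ together with the scalar $-I$ generates $\SL(E[p])$; in particular $G\cap\SL(E[p])$ has index $1$ or $2$ in $\SL(E[p])$. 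Since $\SL(2,\F_p)$ is perfect for $p\geq5$ it has no index-two subgroup, so $G\cap\SL(E[p])=\SL(E[p])$. Combined with $\det(G)=(\Z/p\Z)^\times$, this forces $G=\GL(E[p])$, contradicting the hypothesis $G\neq\GL(E[p])$. For the small primes $p=2,3$ one has $\PSL(2,\F_2)\cong S_3$ and $\PSL(2,\F_3)\cong A_4$, so case (e) is already subsumed in cases (b) and (c) --- and in any event the subgroup lattices of $\GL(2,\F_2)$ and $\GL(2,\F_3)$ can be checked by hand.

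The only genuinely hard ingredient is Dickson's classification of the finite subgroups of $\PGL(2,\F_p)$; granting that, the remaining work is the bookkeeping above --- matching stabilizers of point-pairs (resp.\ single points) on $\PP^1$ with normalizers of Cartan subgroups (resp.\ Borel subgroups) --- together with the elementary simplicity and perfectness facts for $\PSL(2,\F_p)$ and $\SL(2,\F_p)$ and the surjectivity of the determinant on $G$.
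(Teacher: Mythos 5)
The paper does not prove this statement: it is quoted as a known theorem of Serre (\cite{serre1}, \S 2) with no internal proof, so there is nothing in the paper to compare your argument against line by line. Your reconstruction --- Dickson's classification of the subgroups of $\PGL(2,\F_p)$, the translation of the cyclic, dihedral, and reducible cases into Cartan normalizers and Borel subgroups via their fixed points on $\PP^1$, and the exclusion of $\overline{G}\supseteq\PSL(2,\F_p)$ using the perfectness of $\SL(2,\F_p)$ for $p\geq 5$ together with the surjectivity of $\det\circ\rho_{E,p}=\chi_p$ --- is precisely the standard argument (and essentially Serre's own), and it is correct; the dismissal of $p=2,3$ is also harmless, since there $\PGL(2,\F_p)$ is isomorphic to $S_3$ or $S_4$ and so any residual case already falls under alternatives (2)/(3) or contradicts $G\neq\GL(E[p])$.
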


Rouse and Zureick-Brown have classified all the possible $2$-adic images of $\rho_{E,2}\colon \GQ\to \GL(2,\Z_2)$ (see previous results of Dokchitser and Dokchitser \cite{dok} on the surjectivity of $\rho_{E,2} \bmod 2^n$), and Sutherland and Zywina have conjectured the possibilities for the mod $p$ image for all primes $p$.

\begin{thm}[Rouse, Zureick-Brown, \cite{rouse}]\label{thm-rzb} Let $E$ be an elliptic curve over $\Q$ without complex multiplication. Then, there are exactly $1208$ possibilities for the $2$-adic image $\rho_{E,2^\infty}(\GQ)$, up to conjugacy in $\GL(2,\Z_2)$. Moreover, the index of $\rho_{E,2^\infty}(\Gal(\overline{\Q}/\Q))$ in $\GL(2,\Z_2)$ divides $64$ or $96$.
\end{thm}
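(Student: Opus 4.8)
The plan is to run the \emph{modular curve method}: translate the classification of $2$-adic images into the determination of rational points on an explicit---large but finite---collection of modular curves, and then analyze those curves one genus at a time. Since $E$ has no complex multiplication, Serre's open image theorem \cite{serre1} shows that $G := \rho_{E,2^\infty}(\GQ)$ is an open, hence finite-index, subgroup of $\GL(2,\Z_2)$, so $G$ is the full preimage of its reduction $G \bmod 2^n$ for some least $n$, the \emph{level} of $G$; it thus suffices to enumerate, up to conjugacy, the subgroups $H \le \GL(2,\Z/2^n\Z)$ that can occur and to bound $n$. Two necessary conditions prune the list drastically: (i) $\det(H) = (\Z/2^n\Z)^\times$, since $\det \circ \rho_{E,2^\infty}$ is the (surjective) cyclotomic character; and (ii) $H$ contains the image of a complex conjugation, which---using $x^2 = I$, $\det x = -1$, and Cayley--Hamilton---is an element of order at most $2$ with trace $0$ and determinant $-1$. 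Starting from the mod-$2$ image, a subgroup of $\GL(2,\F_2) \cong S_3$, and climbing the tower, one builds by machine the lattice of all such admissible $H$ ordered by inclusion.

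The translation into geometry is then standard: to each admissible $H$ one attaches the modular curve $X_H/\Q$ with its forgetful map $j_H : X_H \to X(1) = \PP^1_j$, the dictionary being that a non-cuspidal, non-CM point of $X_H(\Q)$ is precisely the $j$-invariant of an elliptic curve $E/\Q$ with $\rho_{E,2^\infty}(\GQ)$ conjugate into $H$, and the image \emph{equals} $H$ exactly when the point does not lift to $X_{H'}(\Q)$ for any $H' \subsetneq H$. The problem becomes: for each $H$ in the lattice, decide whether $X_H(\Q)$ carries infinitely many non-cuspidal non-CM points, and when it is finite, provably list all of them.

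This is handled curve by curve, organized by the genus of $X_H$, computed from the degree and ramification of $j_H$ via Riemann--Hurwitz. In genus $0$: if $X_H$ has a rational point it is $\PP^1_\Q$, one writes $j_H$ down explicitly, and $H$ contributes an infinite family of $j$-invariants; a genus-$0$ curve with no rational point has $X_H(\Q) = \emptyset$, so $H$ does not occur. In genus $1$: compute the Jacobian, an elliptic curve over $\Q$; rank $0$ forces $X_H(\Q)$ finite (enumerate via the torsion), while positive rank, once $X_H$ is seen to have a rational point, typically gives another infinite family. In genus $\ge 2$: Faltings guarantees finiteness, and one determines $X_H(\Q)$ exactly via Chabauty--Coleman, the Mordell--Weil sieve, elliptic-curve Chabauty after a base change, or by pushing forward along a dominant map to a curve of lower genus whose rational points are already known, then checks that the finitely many points found are cusps or CM points. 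Finally one shows the tree may be truncated---no subgroup of level exceeding $32$ produces a genuinely new image---so the procedure terminates; counting the survivors yields the stated number, and reading off their indices, using that $[\GL(2,\Z_2):G] = [\SL(2,\Z_2):G \cap \SL(2,\Z_2)]$ (valid because $\det$ is surjective) with a factor of $3$ entering only through $A_3 \le S_3 = \GL(2,\F_2)$ at level $2$, shows that the index always divides $64$ or $96$.

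The main obstacle is the genus $\ge 2$ analysis: there are many such $X_H$, several have Jacobians whose Mordell--Weil rank equals the genus (so a plain Chabauty--Coleman bound is unavailable) or carry unexpected rational points, and each demands its own tailored descent or covering argument; running parallel to this, proving a priori that the tower can be cut off at level $32$---equivalently, that the index is bounded---and managing the bookkeeping of more than a thousand subgroups together with their modular curves and $j$-maps is the other serious difficulty.
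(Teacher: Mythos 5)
This theorem is not proved in the paper at all: it is quoted verbatim from Rouse and Zureick-Brown \cite{rouse}, so there is no internal proof to compare your attempt against. What you have written is, in outline, an accurate description of the strategy actually used in the cited work: reduce via Serre's open image theorem to enumerating finite-level subgroups $H\le\GL(2,\Z/2^n\Z)$ satisfying the determinant and complex-conjugation constraints, attach to each the modular curve $X_H$ with its $j$-map, and determine the rational points genus by genus ($\PP^1$ parametrizations in genus $0$, rank computations in genus $1$, and Chabauty--Coleman, the Mordell--Weil sieve, elliptic-curve Chabauty, or maps to quotient curves in genus $\geq 2$), truncating the tower at level $32$. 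The supporting details you cite are also correct (the image of complex conjugation has order $2$, trace $0$, determinant $-1$; the factor of $3$ in the index $96$ comes from $A_3\le\GL(2,\F_2)\cong S_3$).

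That said, you should be clear that what you have produced is a proof \emph{program}, not a proof. The two items you flag as ``the main obstacle''---provably determining $X_H(\Q)$ for the dozens of genus $\geq 2$ curves, several of which defeat plain Chabauty--Coleman, and establishing a priori that the tower terminates at level $32$---are precisely where all of the mathematical content lives, and neither is carried out here; the count $1208$ and the index divisibility statement are outputs of that computation, not consequences of the framework alone. For the purposes of the present paper this is moot, since the authors (correctly) treat the result as a black box and only use the explicit list of $1208$ subgroups from \cite{rouse} as input to their Magma computations in Theorem \ref{thm-2torsab}; if you wanted to actually establish Theorem \ref{thm-rzb} you would need to reproduce the curve-by-curve analysis of the source.
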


\begin{conj}[Sutherland, Zywina, \cite{zywina1}]\label{conj-zyw} Let $E/\Q$ be an elliptic curve. Let $G$ be the image of $\rho_{E,p}$. Then, there are precisely $63$ isomorphism types of images. 
\end{conj}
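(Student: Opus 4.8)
Since the assertion is the Sutherland--Zywina conjecture on mod-$p$ Galois images, the plan is to work through the arithmetic of modular curves. For a prime $p$ and a subgroup $H\le\GL(2,\F_p)$ with $\det(H)=\F_p^\times$ and $-I\in H$, there is a modular curve $X_H/\Q$ whose non-cuspidal, non-CM rational points correspond (up to quadratic twisting and the usual minor caveats) to elliptic curves $E/\Q$ with $\rho_{E,p}(\GQ)$ conjugate into $H$; the conditions on $\det$ and on $-I$ are automatic, since $\det\circ\rho_{E,p}$ is the mod-$p$ cyclotomic character and complex conjugation has order at most $2$ with a suitable rational normal form. By Theorem~\ref{thm-serre2}, any proper image lies in the normalizer of a split Cartan, the normalizer of a non-split Cartan, a Borel subgroup, or a subgroup whose projective image is $A_4$, $S_4$, or $A_5$; hence for each $p$ there are only finitely many candidate $H$ up to conjugacy, and a classical analysis shows that for all $p$ outside an explicit finite set the only proper candidates are the Borel and the two Cartan-normalizer types, whose modular curves are well understood.

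The next step is to pin down the groups that genuinely occur. For each candidate $H$ one computes $g(X_H)$ together with its cusp and CM loci. Whenever $g(X_H)=0$ and $X_H(\Q)\ne\emptyset$, or $g(X_H)=1$ and $X_H$ is an elliptic curve of positive Mordell--Weil rank, the curve has infinitely many rational points, so after deleting the finitely many cusps and CM points one obtains infinitely many non-CM $E/\Q$ realizing an image inside $H$; choosing $H$ minimal among these over each $p$ produces the images that provably arise. The prime-power-$2$ part of this list is exactly the content of Theorem~\ref{thm-rzb}, and the small-prime exceptional cases (projective image $A_4$, $S_4$, $A_5$) together with the remaining small-$p$ Borel and Cartan cases are cut out by a finite computation built on Theorem~\ref{thm-serre2}; assembling these and counting up to isomorphism yields the asserted $63$ types.

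The crux is to rule out every remaining candidate, that is, to show that the finitely many modular curves $X_H$ with $g(X_H)\ge 2$ --- and those with $g(X_H)=1$, rank $0$, but too few rational points --- carry \emph{no} rational point that is at once non-cuspidal and non-CM; only then is the list complete. For the rank-$0$ genus-$1$ curves this is a finite Mordell--Weil group computation. For the curves of genus $\ge 2$ one would pass to lower-genus quotients with known rational points where possible, and otherwise invoke Chabauty--Coleman (when the rank of the Jacobian is below the genus) or the Mordell--Weil sieve to determine $X_H(\Q)$ completely, then match each point against the explicit lists of cusps and CM $j$-invariants. This last step is exactly where the difficulty --- and the reason the statement remains a conjecture --- is concentrated: a few of the relevant curves, most conspicuously the normalizers of non-split Cartan subgroups $X_{\mathrm{ns}}^{+}(p)$ for the last few primes $p$ (and a small number of prime-power-level curves of genus $\ge 2$ whose Jacobian has rank equal to or exceeding the genus), fall outside the reach of currently effective methods, and pinning down their rational points is essentially equivalent to the still-open part of Serre's uniformity problem (cf.\ \cite{bilu},\cite{alina1},\cite{alina2} above). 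Everything else --- the enumeration of candidate subgroups, the genus, cusp, and CM computations, and the explicit description of the $63$ groups that occur --- is a large but finite and essentially mechanical task.
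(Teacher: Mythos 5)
This statement is labeled a \emph{conjecture} in the paper, and the paper offers no proof of it whatsoever: it is simply quoted from \cite{zywina1} as background (and is never used in the proofs of the main theorems, which rely instead on Theorem \ref{thm-serre2}, Theorem \ref{thm-kenku}, and Theorem \ref{thm-rzb}). Your proposal is likewise not a proof, and to your credit you say so explicitly in the final paragraph. The strategy you outline is the correct and standard one --- enumerate candidate subgroups $H\leq \GL(2,\F_p)$ via Serre's classification, form the modular curves $X_H$, compute genera, and determine rational points --- but the decisive step, showing that every remaining candidate curve of genus $\geq 2$ (or of genus $1$ and rank $0$ with surplus points) has no non-cuspidal, non-CM rational point, is not carried out. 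In particular the curves $X_{\mathrm{ns}}^{+}(p)$ for the outstanding primes are precisely the obstruction in Serre's uniformity problem, so your argument reduces the conjecture to an open problem rather than resolving it. That is a genuine gap, not a fixable omission.

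One further caution: even granting the uniformity step, your sketch would only bound the set of images that occur; arriving at the exact count of $63$ isomorphism types requires the explicit finite enumeration and realization of each group (exhibiting, for each candidate, either an elliptic curve attaining it or a proof that none does), which you describe as ``mechanical'' but do not perform. Since the paper itself makes no claim to prove this statement, the correct posture here is the one the paper takes: cite it as a conjecture, and do not rely on it.
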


The $\Q$-rational points on the modular curves $X_0(N)$ have been described completely in the literature, for all $N\geq 1$. One of the most important milestones in their classification was \cite{mazur1}, where Mazur dealt with the case when $N$ is prime. The complete classification of $\Q$-rational points on $X_0(N)$, for any $N$, was completed due to work of Fricke, Kenku, Klein, Kubert, Ligozat, Mazur  and Ogg, among others (see \cite[eq. (80)]{elkies1}; \cite{elkies2}; \cite{kleinfricke}, \cite[pp. 370-458]{fricke}; \cite[p. 1889]{ishii}; \cite{maier}; \cite{lnm476}; \cite{mazur1}; \cite{kenku}; or the summary tables in \cite{lozano0}).

\begin{thm}\label{thm-ratnoncusps} Let $N\geq 2$ be a number such that $X_0(N)$ has a non-cuspidal $\Q$-rational point. Then:
\begin{enumerate}
\item $N\leq 10$, or $N= 12,13, 16,18$ or $25$. In this case $X_0(N)$ is a curve of genus $0$ and its $\Q$-rational points form an infinite $1$-parameter family, or
\item $N=11,14,15,17,19,21$, or $27$. In this case $X_0(N)$ is a curve of genus $1$, i.e.,~$X_0(N)$ is an elliptic curve over $\Q$, but in all cases the Mordell-Weil group $X_0(N)(\Q)$ is finite, or 

\item $N=37,43,67$ or $163$. In this case $X_0(N)$ is a curve of genus $\geq 2$ and (by Faltings' theorem) there are only finitely many $\Q$-rational points.
\end{enumerate} 
\end{thm}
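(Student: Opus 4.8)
This statement is a synthesis of classical results, so the proof I have in mind is an assembly of known ingredients, organized according to the genus of $X_0(N)$. First I would compute $g(X_0(N))$ for every $N\ge 2$ using the standard formula in terms of the index $[\SL(2,\Z):\Gamma_0(N)]$ and the numbers of elliptic points and cusps; this finite check isolates exactly the three regimes in the statement — genus $0$ for $N\le 10$ and $N\in\{12,13,16,18,25\}$, genus $1$ for twelve values of $N$ including $\{11,14,15,17,19,21,27\}$, and genus $\ge 2$ otherwise. Since the cusp at $\infty$ is always $\Q$-rational, in the genus $0$ case $X_0(N)\cong\PP^1_\Q$, so it has infinitely many $\Q$-rational points and, as there are only finitely many cusps, infinitely many of them are non-cuspidal; the explicit $1$-parameter parametrization is then read off from a Hauptmodul, as in the references.

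For the genus $1$ levels, fixing a rational cusp as the origin makes $X_0(N)$ an elliptic curve over $\Q$, and the content I would need is that $X_0(N)(\Q)$ is finite in every case — equivalently that the Mordell--Weil rank is $0$ — which is available from the low-conductor entries of the elliptic curve tables, or from a direct $2$- or $3$-descent. One then enumerates the finitely many rational points and checks which are cusps: for $\{11,14,15,17,19,21,27\}$ non-cuspidal points occur, whereas for the remaining genus $1$ levels every rational point turns out to be a cusp, so those $N$ drop off the list.

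The genus $\ge 2$ case is the substantive part and splits in two. For $N$ prime, Mazur's theorem on the Eisenstein ideal \cite{mazur1} shows that the only primes for which $X_0(N)$ carries a non-cuspidal $\Q$-rational point are $N\in\{2,3,5,7,11,13,17,19,37,43,67,163\}$; of these exactly $37,43,67,163$ give $X_0(N)$ of genus $\ge 2$, and for each of those four curves the finitely many non-cuspidal rational points are determined explicitly. For composite $N$ of genus $\ge 2$, I would exploit that every $d\mid N$ induces a dominant morphism $X_0(N)\to X_0(d)$ defined over $\Q$: a non-cuspidal rational point on $X_0(N)$ therefore produces compatible non-cuspidal rational points on $X_0(d)$ for all $d\mid N$, i.e.\ a single elliptic curve over $\Q$ admitting $\Q$-rational cyclic isogenies of every degree dividing $N$. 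Feeding in the prime-level restrictions above, together with the bounds on how many simultaneous cyclic isogenies one curve can support (the combinatorial analysis codified in Theorem~\ref{thm-kenku}), rules out all composite $N$ not already listed; this is Kenku's contribution \cite{kenku}.

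The main obstacle is precisely this last step. Faltings' theorem gives finiteness of $X_0(N)(\Q)$ in the genus $\ge 2$ range but is ineffective and silent about which points actually occur, so there is no shortcut around (i) the explicit determination of the rational points on the four sporadic prime-level curves of genus $\ge 2$, and (ii) the delicate bookkeeping, for composite levels, of which tuples of cyclic isogeny degrees can be simultaneously realized by a single elliptic curve over $\Q$.
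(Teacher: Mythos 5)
First, a point of comparison: the paper does not prove this theorem at all. It appears in the ``Prior literature'' section and is attributed to Fricke, Kenku, Klein, Kubert, Ligozat, Mazur, Ogg and others, so there is no in-paper argument to measure your sketch against. Your reconstruction of the genus-$0$ and genus-$1$ cases is accurate (and your claim that the remaining genus-$1$ levels $20,24,32,36,49$ carry only cuspidal rational points does check out, since in each case the number of rational cusps equals the order of the rank-$0$ Mordell--Weil group), as is the reduction of the prime genus-$\ge 2$ case to Mazur's theorem.

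The genuine problem is your treatment of composite levels of genus $\ge 2$. Invoking Theorem \ref{thm-kenku} there is circular: Kenku's bound $C(E)\le 8$ is \emph{deduced from} the classification of rational points on the curves $X_0(N)$ --- that is, from the very theorem you are proving --- not the other way around. And even granting some isogeny-counting principle, the composite case cannot be closed by combining prime-level restrictions with combinatorics on divisors. Take $N=39$: both $X_0(3)$ and $X_0(13)$ have infinitely many non-cuspidal rational points, so compatible points on the $X_0(d)$ for $d\mid 39$ impose no obstruction whatsoever to a rational cyclic $39$-isogeny; one must determine $X_0(39)(\Q)$ directly (a genus-$3$ curve, which Kenku handled via a rank-$0$ elliptic quotient). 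The same issue arises for $65$, $91$, $125$, $169$ and several other levels whose prime-power divisors are all of genus $\le 1$. So the step ``prime-level restrictions plus bookkeeping rules out all remaining composite $N$'' is not an argument but a placeholder for a long list of bespoke Diophantine computations on specific higher-genus modular curves --- which is exactly why the paper cites roughly ten sources for this statement rather than proving it.
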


\section{Preliminary results}\label{sec-someproofs}
In this section we prove a number of results that will be used in Section \ref{sec-mainproof} to prove the main results of this paper.

\begin{defn}\label{defn-borel1} Let $p$ be a prime, and $n\geq 1$. We say that a subgroup $B$ of $\GL(2,\Z/p^n\Z)$ is Borel if every matrix in $B$ is upper triangular, i.e.,
$$B\leq \left\{ \left(\begin{array}{cc} a & b \\ 0 & c\end{array}\right) : a,b,c\in\Z/p^n\Z,\ a,c\in(\Z/p^n\Z)^\times \right\}.$$
We say that $B$ is a non-diagonal Borel subgroup if none of the conjugates of $B$ in $\GL(2,\Z/p^n\Z)$ are formed solely by diagonal matrices. If $B$ is a Borel subgroup, we denote by $B_1$ the subgroup of $B$ formed by those matrices in $B$ whose diagonal coordinates are $1\bmod p^n$, and we denote by $B_d$ the subgroup of $B$ formed by diagonal matrices, i.e.,
$$B_1 = B\cap \left\{ \left(\begin{array}{cc} 1 & b \\ 0 & 1\end{array}\right) : b\in\Z/p^n\Z \right\}, \text{ and } B_d = B\cap \left\{ \left(\begin{array}{cc} a & 0 \\ 0 & c\end{array}\right) : a,c\in(\Z/p^n\Z)^\times \right\}.$$
\end{defn}
\begin{lemma}[\cite{lozano1}, Lemma 2.2]\label{lem-borel} Let $p>2$ be a prime, $n\geq 1$ and let $B\leq \GL(2,\Z/p^n\Z)$ be a Borel subgroup, such that $B$ contains a matrix $g=\left(\begin{array}{cc} a & b \\ 0 & c\end{array}\right)$ with $a\not\equiv c \bmod p$. Let $B'=h^{-1}Bh$ with $h=\left(\begin{array}{cc} 1 & b/(c-a) \\ 0 & 1\end{array}\right)$. Then, $B'\leq \GL(2,\Z/p^n\Z)$ is a Borel subgroup conjugate to $B$ satisfying the following properties:
\begin{enumerate}
\item $B' = B_d'B_1'$, i.e.,~for every $M\in B'$ there is $U\in B_d'$ and $V\in B_1'$ such that $M=UV$, and
\item $B/[B,B]\cong B'/[B',B']$ and $[B',B']=B_1'$. If follows that $[B,B]=B_1$ and it is a cyclic subgroup of order $p^s$ for some $0\leq s\leq n$.
\end{enumerate}
\end{lemma}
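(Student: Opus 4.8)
The plan is to reduce everything to elementary matrix computations inside $\GL(2,\Z/p^n\Z)$, using the hypothesis to pin down the shape of a well-chosen conjugate of $B$.

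First I would establish the block decomposition in part (1). Take the matrix $g=\left(\begin{smallmatrix} a & b \\ 0 & c\end{smallmatrix}\right)\in B$ with $a\not\equiv c\bmod p$; since $p$ is odd and $a-c$ is a unit mod $p$, the quantity $c-a\in(\Z/p^n\Z)^\times$, so $h=\left(\begin{smallmatrix} 1 & b/(c-a) \\ 0 & 1\end{smallmatrix}\right)$ is well-defined and invertible. A direct computation shows $h^{-1}gh=\left(\begin{smallmatrix} a & 0 \\ 0 & c\end{smallmatrix}\right)$, so after conjugating by $h$ we may assume $B=B'$ itself contains the \emph{diagonal} matrix $D=\left(\begin{smallmatrix} a & 0 \\ 0 & c\end{smallmatrix}\right)$ with $a-c$ a unit. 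Now for an arbitrary $M=\left(\begin{smallmatrix} x & y \\ 0 & z\end{smallmatrix}\right)\in B'$, I want to factor $M=UV$ with $U=\left(\begin{smallmatrix} x & 0 \\ 0 & z\end{smallmatrix}\right)$ and $V=\left(\begin{smallmatrix} 1 & \ast \\ 0 & 1\end{smallmatrix}\right)$; for this I need $U\in B'$. The trick is a commutator-type identity: conjugating $D$ by $M$ (or rather computing $M D M^{-1}$, or a suitable product $M^{-1}D M D^{-1}$ type expression) produces a unipotent matrix whose off-diagonal entry is a unit multiple of $y$, precisely because $a/c\neq 1$ as a unit ratio. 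Feeding this back, one shows $V\in B'$, hence $U=MV^{-1}\in B'$, and $U\in B_d'$, $V\in B_1'$, which is (1).

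For part (2), the equality $B/[B,B]\cong B'/[B',B']$ is immediate since $B$ and $B'$ are conjugate. Working with $B'$, the factorization $B'=B_d'B_1'$ from (1) together with the fact that $B_1'$ is normal in $B'$ (it is the intersection of $B'$ with the normal unipotent subgroup of the full upper-triangular group, and conjugation by any upper-triangular matrix preserves the $1$'s on the diagonal) shows that $B'/B_1'\cong B_d'$ is abelian, so $[B',B']\subseteq B_1'$. Conversely, $B_1'\subseteq[B',B']$ follows by exhibiting each element of $B_1'$ as a commutator: given $\left(\begin{smallmatrix} 1 & t \\ 0 & 1\end{smallmatrix}\right)\in B_1'$, use the diagonal matrix $D\in B_d'$ with unit ratio $a/c$ and a suitable $V\in B_1'$ to check that $[D,V]=D V D^{-1} V^{-1}=\left(\begin{smallmatrix} 1 & (a/c-1)s \\ 0 & 1\end{smallmatrix}\right)$ for the appropriate $s$; since $a/c-1$ is a unit, as $s$ ranges over the relevant set of off-diagonal entries of $B_1'$ we recover all of $B_1'$. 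This gives $[B',B']=B_1'$, hence $[B,B]=B_1$ by transporting back via $h$. Finally $B_1$ (equivalently $B_1'$) is a subgroup of the cyclic group $\{\left(\begin{smallmatrix} 1 & b \\ 0 & 1\end{smallmatrix}\right):b\in\Z/p^n\Z\}\cong \Z/p^n\Z$, hence is cyclic of order $p^s$ for some $0\le s\le n$.

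I expect the main obstacle to be part (1): verifying that $U\in B'$ — i.e., that the diagonal part of every $M\in B'$ already lies in $B'$ — genuinely requires the hypothesis $a\not\equiv c\bmod p$, and the cleanest route is the commutator identity that isolates the unipotent part of $M$ using $D$. The arithmetic subtlety is that one must know $a/c-1$ (or $a-c$) is a \emph{unit} mod $p^n$, not merely nonzero; this is exactly where $p>2$ and the congruence condition mod $p$ are used, and it is what makes the division by $c-a$ in the definition of $h$ legitimate in the first place. Everything else is routine $2\times 2$ matrix bookkeeping, and one can cite \cite{lozano1}, Lemma 2.2 for the details since the statement is quoted verbatim from there.
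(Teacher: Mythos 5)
Your argument is correct: the key computation is that for $M=\left(\begin{smallmatrix} x & y \\ 0 & z\end{smallmatrix}\right)\in B'$ and the diagonal element $D=h^{-1}gh=\left(\begin{smallmatrix} a & 0 \\ 0 & c\end{smallmatrix}\right)\in B'$, the commutator $MDM^{-1}D^{-1}=\left(\begin{smallmatrix} 1 & y(c-a)/(zc) \\ 0 & 1\end{smallmatrix}\right)$ lies in $B_1'$ with off-diagonal entry a unit multiple of $y$, so an appropriate power of it is the unipotent factor $V$ with entry $y/x$, giving $U=MV^{-1}\in B_d'$; and the identity $[D,V]=\left(\begin{smallmatrix} 1 & (a/c-1)s \\ 0 & 1\end{smallmatrix}\right)$ with $a/c-1$ a unit gives $B_1'\subseteq[B',B']$, the reverse inclusion being clear from the abelian diagonal quotient. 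The paper only quotes this lemma from \cite{lozano1} without reproducing its proof, so there is nothing internal to compare against, but your reconstruction is the standard argument and is sound (note only that the hypothesis $a\not\equiv c\bmod p$, not $p>2$ per se, is what makes $c-a$ a unit).
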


\begin{lemma}\label{lem-borel3}
Suppose $p>2$ and $G\subseteq \GL(2,\Z/p^2\Z)$ is an abelian subgroup that is diagonal when reduced modulo $p$, and such that $G$  contains a matrix $g=\left(\begin{array}{cc} a & bp \\ cp & d\end{array}\right)$ with $a\not\equiv d \bmod p$. Then, $G$ is a conjugate of a diagonal subgroup of $\GL(2,\Z/p^2\Z)$.
\end{lemma}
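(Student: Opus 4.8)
The idea is to split the argument into two steps: first conjugate $G$ by a suitable element of the principal congruence subgroup $\ker(\GL(2,\Z/p^2\Z)\to\GL(2,\Z/p\Z))$ so that $g$ becomes diagonal, and then use that $G$ is abelian to deduce that every element of $G$ is diagonal.

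For the first step I would record how such conjugations act. If $N\in M_2(\Z/p\Z)$ and $h=I+pN$, then $\det h\equiv 1\bmod p$ so $h\in\GL(2,\Z/p^2\Z)$, $h^{-1}=I-pN$ in $\GL(2,\Z/p^2\Z)$, and for any $M$ a direct multiplication gives $h^{-1}Mh=M+p(MN-NM)$ in $\GL(2,\Z/p^2\Z)$; here the commutator $MN-NM$ is only relevant modulo $p$, so it only depends on $M\bmod p$. Since $h\equiv I\bmod p$, conjugating $G$ by $h$ preserves every hypothesis of the lemma. Writing $\bar g=\operatorname{diag}(\bar a,\bar d)$ for the reduction of $g$, and taking $N=\left(\begin{smallmatrix}0&n_{12}\\ n_{21}&0\end{smallmatrix}\right)$, one computes $\bar gN-N\bar g=\left(\begin{smallmatrix}0&(\bar a-\bar d)n_{12}\\ (\bar d-\bar a)n_{21}&0\end{smallmatrix}\right)$, hence $h^{-1}gh=\left(\begin{smallmatrix}a& bp+(\bar a-\bar d)n_{12}p\\ cp+(\bar d-\bar a)n_{21}p& d\end{smallmatrix}\right)$. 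Because $\bar a-\bar d$ is a unit in $\Z/p\Z$ by hypothesis, choosing $n_{12}\equiv-b/(\bar a-\bar d)$ and $n_{21}\equiv c/(\bar a-\bar d)\bmod p$ turns $h^{-1}gh$ into $\operatorname{diag}(a,d)$. So, after replacing $G$ by $h^{-1}Gh$, we may assume $g=\operatorname{diag}(a,d)$ with $a\not\equiv d\bmod p$.

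For the second step, note that every $M\in G$ has off-diagonal entries divisible by $p$ (as $G$ is diagonal mod $p$), say $M=\left(\begin{smallmatrix}\alpha&\beta p\\ \gamma p&\delta\end{smallmatrix}\right)$, and $M$ commutes with $g$ since $G$ is abelian. Computing, $gM-Mg=\left(\begin{smallmatrix}0&(a-d)\beta p\\ (d-a)\gamma p&0\end{smallmatrix}\right)=0$ in $\GL(2,\Z/p^2\Z)$ forces $(a-d)\beta\equiv(a-d)\gamma\equiv0\bmod p$; since $a-d$ is a unit mod $p$, $\beta\equiv\gamma\equiv0\bmod p$, i.e.\ $\beta p=\gamma p=0$ and $M$ is diagonal. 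Hence $h^{-1}Gh$ is a diagonal subgroup of $\GL(2,\Z/p^2\Z)$, as desired.

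I do not expect a genuine obstacle here: the whole argument reduces to linear algebra over $\Z/p\Z$. The one point worth isolating is the first observation — that the conjugation diagonalizing $g$ can be taken inside the principal congruence subgroup modulo $p$, so that it does not disturb the reduction of $G$ mod $p$ and the resulting group is still abelian with $g$ diagonal.
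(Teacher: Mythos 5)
Your proof is correct and follows essentially the same route as the paper: your conjugating element $h=I+pN$ with $n_{12}\equiv -b/(\bar a-\bar d)$, $n_{21}\equiv c/(\bar a-\bar d)$ is literally the paper's matrix $m$, and the subsequent commutation computation forcing every element to be diagonal is the same. Your explicit remark that $h$ lies in the principal congruence subgroup (so the hypotheses are preserved) makes the argument slightly cleaner, but it is not a different method.
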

\begin{proof}
If $g\in G\subseteq \GL(2,\Z/p^2\Z)$ is an element as in the statement of the Lemma, and $a\not\equiv d \bmod p$, it follows that $a$, $d$, and $a-d$ are units modulo $p^2$. If we conjugate $G$ by 
$$m:=\left(\begin{array}{cc}
1 & -bp/(a-d)\\
cp/(a-d) &1 
\end{array}\right)$$
if necessary (i.e.,~take $m^{-1}Gm$ instead of $G$), we may assume that $g=\left(\begin{array}{cc} a & 0 \\ 0 & d\end{array}\right)$ since, indeed, $m^{-1}gm$ is diagonal modulo $p^2$ of the indicated form). Moreover, we note that the conjugation by $m$ sends $G$ to another group that is congruent to a diagonal subgroup when reducing modulo $p$, since $m$ is congruent to the identity modulo $p$. 

Now, since $G$ is abelian, an arbitrary element $t=\left(\begin{array}{cc} a' & b'p \\ c'p & d'\end{array}\right)$ of $G$ must commute with $g$. And 
$g\cdot t \equiv t\cdot g \bmod p^2$
holds if and only if 
\begin{align*}
b'd & \equiv ab' \bmod p,\\
ac' & \equiv dc' \bmod p.
\end{align*}
Since $a$ and $d$ are units and $a-d\not\equiv 0 \bmod p$, this implies that $b'\equiv c'\equiv 0 \bmod p$, and therefore $t\in G$ must be diagonal modulo $p^2$.
\end{proof}

\begin{prop}\label{prop-borel2}
Let $E/\Q$ be an elliptic curve.  Let $p>2$ be a prime, let $G$ be the image of $\rho_{E,p}$, and assume that $G$ is contained in a Borel subgroup.  If $G$ is abelian, then there is a basis of $E[p]$ such that $G$ is diagonalizable (i.e.,~$G$ is contained in a split Cartan subgroup).
\end{prop}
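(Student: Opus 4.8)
The goal is to show that an abelian subgroup $G \subseteq \GL(2,\Z/p\Z)$ contained in a Borel subgroup is conjugate to a subgroup of the diagonal (split Cartan) group. The strategy is to split into two cases according to whether $G$ contains a matrix with distinct diagonal entries modulo $p$. First I would invoke Lemma~\ref{lem-borel}: if $G$ contains a matrix $g = \left(\begin{smallmatrix} a & b \\ 0 & c\end{smallmatrix}\right)$ with $a \not\equiv c \bmod p$, then after conjugating by the single matrix $h = \left(\begin{smallmatrix} 1 & b/(c-a) \\ 0 & 1\end{smallmatrix}\right)$ we may assume $g$ itself is diagonal, and $G$ remains inside a Borel subgroup. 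Now exactly as in the proof of Lemma~\ref{lem-borel3} (but one level simpler, working mod $p$ instead of mod $p^2$), every element $t = \left(\begin{smallmatrix} a' & b' \\ 0 & c'\end{smallmatrix}\right)$ of $G$ must commute with the diagonal matrix $g$; computing $gt$ and $tg$ shows $b'(c - a) \equiv 0 \bmod p$, and since $a - c$ is a unit this forces $b' \equiv 0$, so $t$ is diagonal. Hence $G$ is already diagonal in this basis.

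**The remaining case.** The other possibility is that \emph{every} matrix in $G$ has equal diagonal entries mod $p$, i.e. $G$ consists of matrices of the form $\left(\begin{smallmatrix} a & b \\ 0 & a\end{smallmatrix}\right)$. Such a matrix equals $a\left(\begin{smallmatrix} 1 & b/a \\ 0 & 1\end{smallmatrix}\right)$, a scalar times a unipotent element. I would argue that in this situation $G$ is cyclic: the map $\left(\begin{smallmatrix} a & b \\ 0 & a\end{smallmatrix}\right) \mapsto (a, b/a)$ identifies $G$ with a subgroup of $(\Z/p\Z)^\times \times \Z/p\Z$, and since the second factor $\Z/p\Z$ is an elementary abelian $p$-group while the first has order prime to $p$, the group $G$ decomposes as $G_{\mathrm{ss}} \times G_u$ where $G_{\mathrm{ss}}$ is cyclic of order dividing $p-1$ and $G_u$ is a subgroup of the unipotent radical. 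If $G_u$ is trivial then $G$ is already diagonal (indeed scalar). If $G_u$ is nontrivial, then $G_u = \left\{\left(\begin{smallmatrix} 1 & b \\ 0 & 1\end{smallmatrix}\right)\right\}$ has order $p$, and because $G$ is abelian and contains $G_u$, the semisimple part $G_{\mathrm{ss}}$ must commute with $G_u$; a scalar matrix $\left(\begin{smallmatrix} a & 0 \\ 0 & a\end{smallmatrix}\right)$ commutes with everything, so this imposes no constraint, and $G = G_{\mathrm{ss}} \cdot G_u$ is a group of order dividing $p(p-1)$ that is \emph{not} diagonalizable — but this is precisely the obstruction I must rule out.

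**Ruling out the unipotent-times-scalar case.** This is the crux. Here I would use that $G$ is not an arbitrary abelian subgroup of the Borel but the image of a Galois representation $\rho_{E,p}$, so it is constrained by the Weil pairing: $\det \circ \rho_{E,p}$ is the mod-$p$ cyclotomic character, which is surjective onto $(\Z/p\Z)^\times$. If $G$ consisted only of matrices $\left(\begin{smallmatrix} a & b \\ 0 & a\end{smallmatrix}\right)$, then $\det$ would take values among the squares $\{a^2\}$, which is an index-$2$ (or larger) subgroup of $(\Z/p\Z)^\times$ when $p > 3$ — contradicting surjectivity. For $p = 3$ the squares are all of $(\Z/3\Z)^\times = \{\pm 1\}$, so this argument alone fails; but for $p=3$ one can check directly that an abelian subgroup of the Borel of $\GL(2,\Z/3\Z)$ with surjective determinant and nontrivial unipotent part does not exist (the full Borel of $\GL(2,\Z/3\Z)$ has order $12$, its abelian subgroups are small, and any one containing $\left(\begin{smallmatrix} 1 & 1 \\ 0 & 1\end{smallmatrix}\right)$ and a non-scalar diagonalizing-obstructing element fails to commute), or alternatively one appeals to Theorem~\ref{thm-serre2} / the known mod-$3$ images. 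Thus in all cases $G_u$ nontrivial forces $G_{\mathrm{ss}}$ to be scalar, hence $G$ itself lies in the scalars times unipotents with surjective determinant — impossible — so $G_u$ is trivial and $G$ is diagonal. The main obstacle is exactly this last step: the purely group-theoretic statement is \emph{false} (the group $\left\{\left(\begin{smallmatrix} a & b \\ 0 & a \end{smallmatrix}\right)\right\}$ with $a$ ranging over a subgroup is abelian, Borel, and non-diagonalizable), so the proof genuinely needs the arithmetic input that $\det$ is surjective, and the prime $p = 3$ must be handled as a small exceptional case.
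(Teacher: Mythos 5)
Your argument is correct and follows essentially the same route as the paper: use the surjectivity of $\det\circ\rho_{E,p}=\chi_p$ to produce a matrix in $G$ with distinct diagonal entries (since otherwise every determinant would be a square), then conjugate by the unipotent matrix $h$ and use commutativity to force every element of $G$ to be diagonal; this is exactly the paper's combination of the determinant observation with Lemma~\ref{lem-borel}. The one correction: your ``exceptional case'' $p=3$ does not exist, because the squares in $(\Z/3\Z)^\times$ are $\{1\}$, not all of $\{\pm 1\}$ (indeed $-1\equiv 2$ is a quadratic non-residue mod $3$), so for \emph{every} odd prime the subgroup of squares has index $2$ and the determinant argument already rules out the case where all matrices have the form $\left(\begin{smallmatrix} a & b \\ 0 & a\end{smallmatrix}\right)$; your separate (and somewhat loosely justified) analysis of abelian subgroups of the Borel mod $3$ is therefore unnecessary, though it does not invalidate the proof.
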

\begin{proof}
Suppose that $G$ is a Borel subgroup. Since the determinant of $\rho_{E,p}$ is the cyclotomic character $\chi_p\colon \GQ \to (\Z/p\Z)^\times$, which is surjective, there must be a matrix in $G$ whose diagonal entries are distinct mod $p$ (otherwise all determinants would be squares in $(\Z/p\Z)^\times$). Then, Lemma \ref{lem-borel} implies that, without loss of generality, we can assume $G=G_dG_1$ in the notation of Definition \ref{defn-borel1}, and so $[G,G]=G_1$. Since $G$ is abelian, we must have that $G_1$ is trivial, and therefore $G=G_d$ is diagonal within $\GL(2,\Z/p\Z)$.
\end{proof}

\begin{cor}\label{cor-split}
Let $E/\Q$ be an elliptic curve, let $p>2$ be a prime, and let $G$ be the image of $\rho_{E,p^2}$. Suppose that $G$ is abelian, and further assume that $G\mod p$ is contained in a  Borel subgroup of $\GL(2,\Z/p\Z)$. Then, there is a basis of $E[p^2]$ such that $G$ is contained in a split Cartan subgroup of $\GL(2,\Z/p^2\Z)$.
\end{cor}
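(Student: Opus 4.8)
The plan is to deduce Corollary~\ref{cor-split} from Proposition~\ref{prop-borel2} together with Lemma~\ref{lem-borel3}. First I would observe that the reduction map $\GL(2,\Z/p^2\Z)\to\GL(2,\Z/p\Z)$ carries $G$ onto $G\bmod p$, which by hypothesis lies in a Borel subgroup of $\GL(2,\Z/p\Z)$; since $G\bmod p=\rho_{E,p}(\GQ)$ is abelian (being a quotient of the abelian $G$), Proposition~\ref{prop-borel2} applies and, after choosing a suitable $\Z/p\Z$-basis of $E[p]$, we may assume $G\bmod p$ is diagonal. Lifting that basis to a basis of $E[p^2]$ (which changes $G$ only by conjugation in $\GL(2,\Z/p^2\Z)$), we arrange that $G$ itself is diagonal when reduced modulo $p$; that is, every matrix in $G$ has the shape $\left(\begin{smallmatrix} a & bp \\ cp & d\end{smallmatrix}\right)$.

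Next I would produce inside $G$ a matrix $g=\left(\begin{smallmatrix} a & bp \\ cp & d\end{smallmatrix}\right)$ with $a\not\equiv d\bmod p$, which is the hypothesis needed to invoke Lemma~\ref{lem-borel3}. This is exactly the same determinant argument used in the proof of Proposition~\ref{prop-borel2}: the composite $\det\circ\rho_{E,p^2}$ is the mod $p^2$ cyclotomic character $\chi_{p^2}:\GQ\to(\Z/p^2\Z)^\times$, which is surjective. If every element of $G$ had $a\equiv d\bmod p$, then modulo $p$ every determinant of an element of $G$ would be $ad\equiv a^2$, a square in $(\Z/p\Z)^\times$; but $\chi_{p^2}$ surjective forces $\chi_p$ surjective onto $(\Z/p\Z)^\times$, which contains non-squares since $p>2$, a contradiction. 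Hence such a $g$ exists. Now Lemma~\ref{lem-borel3}, applied with this $g$, gives that $G$ is a conjugate (in $\GL(2,\Z/p^2\Z)$) of a diagonal subgroup, i.e.\ after a further change of basis $G$ lies in the split Cartan subgroup of $\GL(2,\Z/p^2\Z)$. Composing the three basis changes yields a single basis of $E[p^2]$ with the desired property.

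The one point requiring a little care — and the closest thing to an obstacle — is checking that the hypotheses of Lemma~\ref{lem-borel3} are genuinely met: that $G$ is abelian (given), that $G\bmod p$ is diagonal (arranged in the first step, not merely Borel, so that the off-diagonal entries of elements of $G$ are divisible by $p$), and that the witnessing matrix $g$ has distinct diagonal entries modulo $p$ (the determinant argument). Once these are in place the result is immediate. I would write the proof in three or four sentences: reduce mod $p$ and apply Proposition~\ref{prop-borel2} to fix the basis; note $G$ is then diagonal mod $p$; extract $g$ via surjectivity of the cyclotomic character; conclude by Lemma~\ref{lem-borel3}.

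\begin{proof}
Let $\pi:\GL(2,\Z/p^2\Z)\to \GL(2,\Z/p\Z)$ be the reduction modulo $p$. Then $\pi(G)$ is the image of $\rho_{E,p}$, it is abelian as a quotient of $G$, and by hypothesis it is contained in a Borel subgroup of $\GL(2,\Z/p\Z)$. By Proposition~\ref{prop-borel2}, there is a $\Z/p\Z$-basis of $E[p]$ with respect to which $\pi(G)$ is diagonal. Lifting this to a basis of $E[p^2]$ replaces $G$ by a conjugate in $\GL(2,\Z/p^2\Z)$; after this change of basis every element of $G$ has the form $\left(\begin{array}{cc} a & bp \\ cp & d\end{array}\right)$, i.e.\ $G$ is diagonal modulo $p$. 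Since $\det\circ\rho_{E,p^2}$ is the cyclotomic character $\chi_{p^2}:\GQ\to(\Z/p^2\Z)^\times$, which is surjective, the reduction $\chi_p$ is surjective onto $(\Z/p\Z)^\times$; as $p>2$, this group contains non-squares, so $G$ must contain a matrix $g=\left(\begin{array}{cc} a & bp \\ cp & d\end{array}\right)$ with $ad\not\equiv a^2\bmod p$, hence with $a\not\equiv d\bmod p$ (note $a$ is a unit). Now $G$ is abelian, diagonal modulo $p$, and contains such a $g$, so Lemma~\ref{lem-borel3} implies that $G$ is a conjugate of a diagonal subgroup of $\GL(2,\Z/p^2\Z)$. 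Therefore there is a basis of $E[p^2]$ for which $G$ is contained in a split Cartan subgroup of $\GL(2,\Z/p^2\Z)$.
\end{proof}
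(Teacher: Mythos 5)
Your proposal is correct and follows essentially the same route as the paper: apply Proposition~\ref{prop-borel2} to make $G\bmod p$ diagonal, extract a matrix with distinct diagonal entries modulo $p$ via surjectivity of the cyclotomic character, and conclude with Lemma~\ref{lem-borel3}. The only difference is that you spell out the determinant argument explicitly, whereas the paper simply refers back to the proof of Proposition~\ref{prop-borel2} for the existence of the witnessing matrix.
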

\begin{proof}
If $G$ is abelian, then $G\bmod p$ is abelian as well. By Proposition \ref{prop-borel2}, there is a basis of $E[p^2]$ such that $G\bmod p$ is diagonal. Moreover, as pointed out in the proof of the proposition, there is a matrix in $G$ whose diagonal entries are distinct modulo $p$. Hence, Lemma \ref{lem-borel3} applies, and there is a basis of $E[p^2]$ such that $G$ is diagonal in $\GL(2,\Z/p^2\Z)$, as claimed.
\end{proof}

\begin{prop}\label{prop-split}
Let $E/\Q$ be an elliptic curve.  Let $p>2$ be a prime, let $G$ be the image of $\rho_{E,p}$, and suppose that $G$ is abelian, and contained in the normalizer of a split Cartan subgroup of $\GL(2,\Z/p\Z)$. Then, $G$ must be diagonalizable.
\end{prop}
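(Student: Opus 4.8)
The plan is to exploit the structure of the normalizer of a split Cartan subgroup $\mathcal{C}_{\text{sp}}^+$ of $\GL(2,\Z/p\Z)$, which sits in a short exact sequence $1\to \mathcal{C}_{\text{sp}}\to \mathcal{C}_{\text{sp}}^+\to \Z/2\Z\to 1$, where the nontrivial coset is represented by the antidiagonal matrix $w=\left(\begin{smallmatrix} 0 & 1 \\ 1 & 0\end{smallmatrix}\right)$ (up to conjugation we may take $\mathcal{C}_{\text{sp}}$ to be the full diagonal subgroup). If $G\subseteq \mathcal{C}_{\text{sp}}$, there is nothing to prove, so assume $G$ contains an element $g$ projecting to the nontrivial coset, i.e., $g = \left(\begin{smallmatrix} 0 & u \\ v & 0\end{smallmatrix}\right)$ for some $u,v\in(\Z/p\Z)^\times$. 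The idea is that abelianness forces $G$ to be too small to escape a conjugate of the diagonal torus.

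First I would use the determinant constraint: since $\det\circ\,\rho_{E,p}=\chi_p$ is surjective onto $(\Z/p\Z)^\times$, which has order $p-1\geq 2$ for $p>2$, the group $G$ cannot be contained in the kernel of the sign map to $\Z/2\Z$ composed with anything — but more to the point, $G$ must contain diagonal elements of every determinant, and in particular $G\cap \mathcal{C}_{\text{sp}}$ has index exactly $2$ in $G$. Now take any diagonal $d=\left(\begin{smallmatrix} a & 0 \\ 0 & c\end{smallmatrix}\right)\in G$. Since $G$ is abelian, $d$ commutes with the antidiagonal element $g=\left(\begin{smallmatrix} 0 & u \\ v & 0\end{smallmatrix}\right)$; computing $dg$ and $gd$ shows $gd = \left(\begin{smallmatrix} 0 & cu \\ av & 0\end{smallmatrix}\right)$ while $dg = \left(\begin{smallmatrix} 0 & au \\ cv & 0\end{smallmatrix}\right)$, so commutativity forces $a=c$, i.e., $d$ is scalar. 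Therefore $G\cap \mathcal{C}_{\text{sp}}$ consists entirely of scalar matrices, hence has determinants which are all squares in $(\Z/p\Z)^\times$. Then $G$ itself has at most $2\cdot\frac{p-1}{2}=p-1$ possible determinant values only if $g$ and its scalar multiples contribute the non-square determinants; but in fact the determinants of elements of $G$ lie in (squares) $\cup$ $(\det g)\cdot$(squares), a subgroup of $(\Z/p\Z)^\times$ of index $\geq (p-1)/2\cdot\tfrac12\cdot\ldots$ — more cleanly: the set of determinants is contained in the subgroup generated by squares and $\det(g) = -uv$, which has order dividing $2\cdot\frac{p-1}{2}=p-1$, so this alone is not yet a contradiction. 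The sharper point is that $G\cap\mathcal{C}_{\text{sp}}$ being scalar means $|G|\mid 2(p-1)$, and surjectivity of $\det$ forces the scalars in $G$ to have all square determinants $\{x^2:x\in(\Z/p\Z)^\times\}$, which has order $(p-1)/\gcd(2,p-1)=(p-1)/2$; hence $|G\cap\mathcal{C}_{\text{sp}}|\geq (p-1)/2$ scalars, but the scalar subgroup of $\GL(2,\Z/p\Z)$ has order $p-1$, so scalars of $G$ form a subgroup $Z$ with $(p-1)/2\mid |Z|\mid p-1$.

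Next I would derive the contradiction (or the reduction) directly rather than via counting. We have shown every diagonal element of $G$ is scalar. So $G = Z \cup Zg$ where $Z$ is a group of scalars and $g$ is antidiagonal. Now here is the key: $g^2 = \left(\begin{smallmatrix} uv & 0 \\ 0 & uv\end{smallmatrix}\right)$ is scalar, so $g^2\in Z$ automatically, and $G$ is indeed a group of this form. Such a $G$ is abelian for free. The claim to establish is that $G$ is conjugate into the diagonal torus. Conjugating $g=\left(\begin{smallmatrix}0&u\\v&0\end{smallmatrix}\right)$ by $h=\left(\begin{smallmatrix}1&0\\0&t\end{smallmatrix}\right)$ gives $h^{-1}gh=\left(\begin{smallmatrix}0&ut\\v/t&0\end{smallmatrix}\right)$, which doesn't diagonalize $g$. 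Instead, $g$ is diagonalizable over $\Z/p\Z$ iff its characteristic polynomial $X^2-uv$ splits, i.e., iff $uv$ is a square mod $p$. Its eigenvalues would be $\pm\sqrt{uv}$, and since $\det g = -uv$, which is a square times $-1$; so $g$ is diagonalizable iff $uv\in(\Z/p\Z)^{\times 2}$, which need NOT hold. When $uv$ is a non-square, $g$ is conjugate to an element of a non-split Cartan, not a split one — but then $\det g=-uv$ and the scalars in $Z$ having square determinant means $\det G\subseteq \langle (\Z/p\Z)^{\times 2}, -uv\rangle$, which fails to be all of $(\Z/p\Z)^\times$ exactly when $-uv$ is a square, i.e., when $-1\cdot(\text{non-square})$... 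I expect the clean finish is: surjectivity of $\det$ on $G=Z\cup Zg$ with $Z$ scalar forces $\langle \{z^2\}, -uv\rangle = (\Z/p\Z)^\times$, which forces $-uv$ to be a non-square; then the characteristic polynomial $X^2 - uv$ of $g$ has $uv = -(-uv) = -(\text{non-square})$; whether this splits depends on whether $-1$ is a square mod $p$, so this case analysis (splitting into $p\equiv 1,3\bmod 4$) is where the real work lies, and in the non-splitting subcase one shows $G$ cannot surject under $\det$ after all, or uses that the image of $\rho_{E,p}$ contains complex conjugation (an element with eigenvalues $1,-1$, hence determinant $-1$ and trace $0$) — an element of $G$ of order $2$ with $\det=-1$ — and checks this cannot lie in $Zg$ unless $g$ is (conjugate to) diagonal, giving the result.

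The main obstacle I anticipate is precisely this last dichotomy: handling the case where the antidiagonal coset representative $g$ has non-square "norm" $uv$ and is therefore a priori conjugate only into a non-split Cartan. The resolution should combine the determinant-surjectivity constraint with the presence of complex conjugation $c\in G$ (which satisfies $c^2=I$, $\det c = -1$, $\operatorname{tr} c = 0$, so $c$ is conjugate to $\left(\begin{smallmatrix}1&0\\0&-1\end{smallmatrix}\right)$): since $c\notin Z$ (scalars have $\det\in\{\pm1\}$ only if the scalar is $\pm1$, and $\operatorname{tr}(\pm I)=\pm2\neq 0$ for $p>2$), we get $c\in Zg$, so $c = z_0 g$ with $z_0$ scalar, hence $g = z_0^{-1}c$ is a scalar multiple of an element conjugate to $\operatorname{diag}(1,-1)$, which is therefore diagonalizable over $\Z/p\Z$ — and conjugating by the same matrix diagonalizes $c$ and hence all of $G=Z\cup Zg$ simultaneously (scalars are fixed by any conjugation). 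That conjugating basis change is exactly a new $\Z/p\Z$-basis of $E[p]$, completing the proof. I would write the argument in this order: reduce to $G\not\subseteq\mathcal{C}_{\text{sp}}$; show diagonal elements of $G$ are scalar; invoke complex conjugation $c\in G$ and place it in the nontrivial coset; conclude $g$ (hence $G$) is diagonalizable.
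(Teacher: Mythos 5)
Your proposal is correct and follows essentially the same route as the paper's proof: commutativity forces every diagonal element of $G$ to be scalar and all antidiagonal elements into a single coset, and the image of complex conjugation (an involution of determinant $-1$, hence non-scalar with eigenvalues $1$ and $-1$) must lie in that coset, forcing the whole group to be simultaneously diagonalizable. The only cosmetic difference is that the paper exhibits the common eigenvectors $\langle(\mu,1)\rangle$, $\langle(-\mu,1)\rangle$ explicitly, whereas you argue that conjugation diagonalizing the complex-conjugation element fixes the scalars and hence diagonalizes all of $G$; the exploratory determinant-counting in the middle of your write-up is a dead end you correctly abandon.
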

\begin{proof}
In order to abbreviate matrix notation, we define diagonal and anti-diagonal matrices:
$$D(a,b) = \left(\begin{array}{cc} a & 0 \\ 0 & b\end{array}\right),\quad A(c,d) = \left(\begin{array}{cc} 0 & c \\ d & 0\end{array}\right),$$
for any $a,b,c,d\in \F_p^\times$. With this notation, the split Cartan subgroup of $\GL(2,\F_p)$ is the subgroup $\spl = \{D(a,b) : a,b \in \F_p^\times\}$. It is easy to show that the normalizer of the split Cartan subgroup of $\GL(2,\F_p)$ is the subgroup
$$\mathcal{C}_{\text{sp}}^+ = \{D(a,b),\ A(c,d): a,b,c,d\in\F_p^\times\}.$$
Suppose $G\subseteq \mathcal{C}_{\text{sp}}^+$ is abelian, and $A(c,d)\in G$ for some $c,d\in\F_p^\times$. Then, if $D(a,b)\in G$, we have
$$A(bc,ad)=A(c,d)D(a,b) = D(a,b)A(c,d)= A(ac,bd),$$
and, therefore, $ac\equiv bc$, $ad\equiv bd\bmod p$, and so $a\equiv b \bmod p$. Hence, if $D(a,b)\in G$, then $a\equiv b\bmod p$. Similarly, if $A(c,d),A(e,f)\in G$, we have
$$D(cf,de)=A(c,d)A(e,f) = A(e,f)A(c,d)= D(de,cf),$$
and so $cf\equiv de\bmod p$ or, equivalently, $c/d\equiv e/f \bmod p$, i.e.,~there is $\varepsilon\in \F_p^\times$ such that if $A(c,d)\in G$, then $c\equiv \varepsilon d\bmod p$. In summary, so far we have shown that
$$G\subseteq \{D(a,a),A(\varepsilon c,c): a,c\in\F_p^\times \}\subseteq \mathcal{C}_{\text{sp}}^+$$
where $\varepsilon$ is fixed. Moreover, $G$ must contain an element $\sigma$ that corresponds to the image of a complex conjugation in $\GQ$. Thus, $\det(\rho_{E,p}(\sigma))=\chi_p(\sigma)\equiv -1 \bmod p$, and $(\rho_{E,p}(\sigma))^2$ must be the identity, thus, the eigenvalues of $\rho_{E,p}(\sigma)$ are $1$ and $-1$. Since the eigenvalues of $D(a,a)$ are $a,a$, and the eigenvalues of $A(\varepsilon c,c)$ are $\pm c\sqrt{\varepsilon}$, we must have $\sigma = A(\varepsilon c,c)$ where  $\varepsilon\equiv \mu^2\in(\F_p^\times)^2$ is a square modulo $p$, and $c\equiv \pm \mu^{-1}\bmod p$. Hence,
$$G\subseteq \{D(a,a),A(\mu^2 c,c): a,c\in\F_p^\times \}\subseteq \mathcal{C}_{\text{sp}}^+,$$
for some fixed $\mu\in\F_p^\times$. Since the matrices $D(a,a)$ and $A(\mu^2c,c)$, for any $a,c\in\F_p^\times$, fix the subspaces $\langle(\mu,1)\rangle$ and $\langle(-\mu,1)\rangle$, and $p>2$ (thus, the two subspaces are independent), it follows that $G$ is diagonalizable, as desired.
\end{proof}

\begin{prop}\label{prop-nonsplit}
Let $E/\Q$ be an elliptic curve.  Let $p>2$ be a prime, let $G$ be the image of $\rho_{E,p}$, and suppose that $G$ is abelian, and contained in the normalizer of a non-split Cartan subgroup of $\GL(2,\Z/p\Z)$. Then, $G$ must be diagonalizable.
\end{prop}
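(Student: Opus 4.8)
The plan is to use the one extra piece of information available beyond the hypotheses, namely that $G=\rho_{E,p}(\GQ)$ contains the image $\sigma:=\rho_{E,p}(c)$ of a complex conjugation $c\in\GQ$. As in the last paragraph of the proof of Proposition \ref{prop-split}, the key point is that $\sigma$ has two distinct eigenvalues, and an abelian subgroup of $\GL(2,\Z/p\Z)$ containing such a $\sigma$ is automatically diagonalizable.

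First I would record the two basic properties of $\sigma$. Since $c$ is an involution, $\sigma^2=\rho_{E,p}(c^2)=I$; and since $\det\rho_{E,p}$ is the mod-$p$ cyclotomic character $\chi_p$ and $c$ inverts roots of unity, $\det\sigma=\chi_p(c)\equiv -1\pmod p$. Because $p>2$ we have $-1\not\equiv 1\pmod p$, so $\sigma\neq I$, and also $\sigma\neq -I$ since $\det(-I)=1$. As $\sigma^2=I$, the minimal polynomial of $\sigma$ divides $(x-1)(x+1)$, which is separable mod $p$ (here again $p>2$ is used); since $\sigma\neq\pm I$ this minimal polynomial equals $(x-1)(x+1)$, so $\sigma$ is diagonalizable with eigenvalues precisely $1$ and $-1$. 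Writing $V_{1}$ and $V_{-1}$ for the corresponding one-dimensional eigenspaces, we get $E[p]=V_{1}\oplus V_{-1}$.

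Finally I would invoke that $G$ is abelian: each $\tau\in G$ commutes with $\sigma$, hence maps each eigenspace $V_{\pm 1}$ of $\sigma$ into itself. Choosing a nonzero vector in $V_{1}$ and one in $V_{-1}$ yields a basis of $E[p]$ in which every element of $G$ is diagonal; that is, $G$ is diagonalizable (equivalently, contained in a split Cartan subgroup of $\GL(2,\Z/p\Z)$), as claimed.

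I do not expect a genuine obstacle: the argument is a short piece of linear algebra, and the only place the hypotheses truly enter is in excluding $\sigma=\pm I$, which needs both $p>2$ and $\det\sigma=-1$. Note that the non-split Cartan hypothesis is not used directly; it merely records which case of Serre's Theorem \ref{thm-serre2} we are treating. If one prefers a proof that stays inside the non-split Cartan normalizer $C^{+}=C\cup C\cdot w$ (with $C$ the cyclic non-split Cartan and conjugation by $w$ acting on $C$ as the Frobenius $x\mapsto x^{p}$), one can instead argue: $G\subseteq C$ is impossible because the unique involution of $C$ is $-I$ whereas $\det\sigma=-1$; and if $G\not\subseteq C$, then commuting with any $g\in G\setminus C$ forces $G\cap C$ to consist of scalar matrices, so that $G$ is again governed by $\sigma$, which has distinct eigenvalues.
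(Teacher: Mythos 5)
Your proof is correct, but it takes a genuinely different and more economical route than the paper. The paper argues structurally: it first rules out $G\subseteq C_{ns}$ because the cyclic group $C_{ns}$ has $-\operatorname{Id}$ as its unique involution while complex conjugation must have determinant $-1$; it then works inside the normalizer, using commutativity to force the $C_{ns}$-part of $G$ to be scalar, using complex conjugation to pin down that $-\varepsilon$ is a square $\mu^2$, and finally exhibiting the common eigenvectors $(\pm\mu,1)$ explicitly. (Your closing sketch of a ``stay inside $C^{+}$'' argument is essentially the paper's actual proof.) Your main argument instead isolates the one fact doing all the work: for $p>2$ the image $\sigma$ of complex conjugation satisfies $\sigma^2=I$, $\det\sigma=-1$, hence $\sigma\neq\pm I$ and has distinct eigenvalues $1,-1$ with one-dimensional eigenspaces, and any abelian subgroup containing such an element preserves those eigenspaces and is therefore simultaneously diagonalizable. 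Each step checks out, and as you note the non-split Cartan hypothesis is never used; your argument in fact proves Theorem \ref{thm-diagonal} directly, subsuming Propositions \ref{prop-split}, \ref{prop-nonsplit}, and the mod-$p$ content of Proposition \ref{prop-borel2} in one stroke and eliminating the case analysis over Serre's classification. What the paper's longer route buys is the explicit description of the possible shapes of $G$ inside $\norspl$ and $C_{ns}^{+}$, which is in the spirit of the explicit computations reused later in the parametrization sections; what yours buys is brevity and a cleaner conceptual reason for the result.
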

\begin{proof}
Suppose first that $G$ is contained in a non-split Cartan subgroup of $\GL(2,\Z/p\Z)$, i.e., 
$$G\subseteq \left\{ \left(\begin{array}{cc}
a & \varepsilon b \\
b & a
\end{array} \right) :a,b\in \Z/p\Z,\ (a,b)\not\equiv (0,0) \bmod p \right\}=C_{ns},$$
where $\varepsilon$ is a fixed quadratic non-residue modulo $p$. Let $\sigma\in \GQ$ be a complex conjugation. Then, $\det(\rho_{E,p}(\sigma))=\chi_p(\sigma)\equiv -1 \bmod p$, where $\chi_p$ is the $p$-th cyclotomic character. Moreover, $(\rho_{E,p}(\sigma))^2$ must be the identity. Since $C_{ns}$ is cyclic of order $p^2-1$, there is a unique element of order $2$, namely $-\operatorname{Id}$, but $\det(-\operatorname{Id})\equiv 1 \bmod p$, and we have reached a contradiction (because $p>2$ and $1\not\equiv -1 \bmod p$). Thus, $G\subset C_{ns}$ is impossible.

Now suppose that $G$ is abelian and contained in $C_{ns}^+$, the normalizer of $C_{ns}$ in $\GL(2,\Z/p\Z)$, i.e., 
$$G\subseteq \left\{ \left(\begin{array}{cc}
a & \varepsilon b \\
b & a
\end{array} \right), \left(\begin{array}{cc}
a & \varepsilon b \\
-b & -a
\end{array} \right) :a,b\in \Z/p\Z,\ (a,b)\not\equiv (0,0) \bmod p \right\}=C^+_{ns},$$
but $G\not\subset C_{ns}$. In order to abbreviate matrix notation, let $M(a,b)$ be an element of $C_{ns}$, and let $N(c,d)$ be an element of $C_{ns}^+$ not contained in $C_{ns}$. 

Since $G\not\subset C_{ns}$, there are $c,d$ such that $N(c,d)\in G$, with $d\not\equiv 0 \bmod p$. To see this, note that if there is $M(a,b)\in G$ with $b\not\equiv 0\bmod p$, then $M(a,b)\cdot N(c,0) = N(ac,-bc)$ with $bc\not\equiv 0 \bmod p$. Otherwise, if $G$ is generated by matrices of the form $M(a,0)$ and $N(c,0)$, then $G$ is contained in the split Cartan $C_s$, and therefore diagonalizable.

Further, if $M(a,b)\in G$, and $N(c,d)\in G$ with $d\not\equiv 0\bmod p$ is as above, since $G$ is abelian we have 
$$M(a,b)\cdot N(c,d)\equiv N(c,d)\cdot M(a,b)\bmod p,$$
which implies $2\varepsilon bd\equiv 0 \bmod p$. Since $p>2$ and $d\not\equiv 0\bmod p$, we conclude that $b\equiv 0\bmod p$. Hence,
$$G\subseteq \{M(a,0),N(e,f):a,e,f\in\Z/p\Z\}.$$
Now, if $G$ contains matrices $N(c,d)$ and $N(c',d')$, they also commute, and in particular
\begin{align*} 
N(c,d)\cdot N(c',d')&\equiv \left(\begin{array}{cc}
cc'-\varepsilon dd' & \varepsilon (cd'-c'd) \\
cd'-c'd & cc'-\varepsilon dd'
\end{array} \right)\\
&\equiv \left(\begin{array}{cc}
cc'-\varepsilon dd' & \varepsilon (c'd-cd') \\
c'd-cd' & cc'-\varepsilon dd'
\end{array} \right) \equiv N(c',d')\cdot N(c,d)\bmod p,\end{align*}
and therefore $2cd'\equiv 2c'd\bmod p$, and so $c'/c\equiv d'/d\equiv a \bmod p$, and $N(c',d')\equiv M(a,0)\cdot N(c,d)$. This shows that $G$ is a subgroup of the group generated by 
$$H=\{ N(c,d), M(a,0) : a\in (\Z/p\Z)^\times \}.$$
Since $G$ is the image of $\rho_{E,p}$, it must have full determinant, and therefore we can assume that $N(c,d)$ is of determinant $g$, a primitive root mod $p$, because the matrices $M(a,0)$ have square determinants (and the determinant of $N(ac,ad)\equiv M(a,0)\cdot N(c,d)$ is $a^2\cdot g$). 

Moreover, if $\sigma\in \GQ$ is a complex conjugation, we have  $\det(\rho_{E,p}(\sigma))=\chi_p(\sigma)\equiv -1 \bmod p$ and the trace of $\rho_{E,p}(\sigma)$ is zero. If $p\equiv 1\bmod 4$, then the only matrices with determinant $-1$ (a square mod $p$) are of the form $M(a,0)$, with trace $2a\not\equiv 0\bmod p$, so we would reach a contradiction, as there needs to be an element of $G$ that corresponds to the image of $\sigma$. 

If $p\equiv 3 \bmod 4$, then the eigenvalues of $N(c,d)$ are $\pm \lambda$ with $\lambda^2 \equiv c^2-\varepsilon d^2\equiv - \det(N(c,d)) \equiv -g \bmod p$, which is a quadratic residue because both $-1$ and $g$ are quadratic non-residues. Thus, $c^2-\varepsilon d^2$ is a quadratic residue, and $\pm \lambda \in \Z/p\Z$. Hence $N(c,d)$ has two distinct eigenvectors $u$ and $v$ that form a basis of eigenvectors, with eigenvalues $\lambda$ and $-\lambda$ respectively.  Since $G$ is contained in $H$, and $H$ is generated by diagonal matrices $M(a,0)$ (which fix the directions $u$ and $v$) and $N(c,d)$, then $H$ is a diagonalizable subgroup (with respect to the basis $\{u,v\}$), and so is $G$, as desired.
\end{proof}

\begin{thm}\label{thm-diagonal}
Let $E/\Q$ be an elliptic curve.  Let $G$ be the image of $\rho_{E,p}$.  If $p>2$ and $G$ is abelian, then there is a basis of $E[p]$ such that $G$ is diagonalizable.
\end{thm}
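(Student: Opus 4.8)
The plan is to invoke Serre's classification of proper subgroups of $\GL(2,\Z/p\Z)$ (Theorem~\ref{thm-serre2}) to reduce to a small number of cases, and then dispatch each case using the results already established in this section. First I would observe that if $G=\GL(E[p])=\GL(2,\Z/p\Z)$, then $G$ is non-abelian for every $p>2$ (for instance, it contains a non-abelian subgroup such as the group of upper triangular matrices, or one simply notes $\GL(2,\F_p)$ has trivial center quotient acting faithfully), so the hypothesis that $G$ is abelian forces $G\neq \GL(E[p])$. Hence Theorem~\ref{thm-serre2} applies, and after a suitable choice of $\Z/p\Z$-basis of $E[p]$ we are in one of its four listed cases.

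Next I would go through the four cases. In case (1), $G$ is contained in the normalizer of a split Cartan subgroup, and Proposition~\ref{prop-split} immediately gives that $G$ is diagonalizable. In case (2), $G$ is contained in the normalizer of a non-split Cartan subgroup, and Proposition~\ref{prop-nonsplit} gives the same conclusion. In case (4), $G$ is contained in a Borel subgroup, and Proposition~\ref{prop-borel2} shows that $G$ is diagonalizable (this uses that $\det\rho_{E,p}=\chi_p$ is surjective to produce a matrix with distinct diagonal entries mod $p$, then Lemma~\ref{lem-borel} to kill the commutator $G_1$). So the only case requiring genuinely new work is case (3): the projective image $\overline{G}$ of $G$ in $\PGL(E[p])$ is isomorphic to $A_4$, $S_4$, or $A_5$.

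I would rule out case (3) by a commutativity argument: if $G$ is abelian, then its image $\overline{G}$ in $\PGL(2,\F_p)$ is also abelian, whereas $A_4$, $S_4$, and $A_5$ are all non-abelian. This is an immediate contradiction, so case (3) cannot occur under the hypothesis that $G$ is abelian. Combining the four cases, in every possibility there is a basis of $E[p]$ with respect to which $G$ is diagonalizable, which is exactly the assertion of the theorem.

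The main (and essentially only) obstacle here is not a hard estimate but making sure the case analysis is airtight: one must verify that the hypothesis ``$G$ abelian'' is genuinely incompatible with $G=\GL(2,\F_p)$ for $p>2$, and that it forces the projective image away from $A_4$, $S_4$, $A_5$. Both are elementary group-theoretic facts. The substantive content --- the Borel, split-Cartan, and non-split-Cartan cases --- has already been handled in Propositions~\ref{prop-borel2}, \ref{prop-split}, and \ref{prop-nonsplit}, so the proof of this theorem is essentially a short assembly of those pieces together with Theorem~\ref{thm-serre2}.
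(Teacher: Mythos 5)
Your proposal is correct and follows essentially the same route as the paper: both invoke Theorem~\ref{thm-serre2}, dispatch cases (1), (2), (4) via Propositions~\ref{prop-split}, \ref{prop-nonsplit}, and \ref{prop-borel2} respectively, and rule out case (3) because an abelian $G$ cannot have a non-abelian (projective) quotient. Your extra remark that the hypothesis forces $G\neq\GL(2,\F_p)$ is a harmless elementary point the paper leaves implicit.
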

\begin{proof}
By the classification of maximal subgroups of $\GL(2,\Z/p\Z)$ in Theorem \ref{thm-serre2}, the group $G$ is a conjugate of a subgroup in one of the options (1)-(4). If $G$ is abelian, then it is clear that option (3) is impossible, as $G$ would have a non-abelian quotient. Moreover, if $G$ is abelian, then options (1), (2), and (4) imply that $G$ is diagonalizable, by Propositions \ref{prop-split}, \ref{prop-nonsplit}, and \ref{prop-borel2},  respectively. 
\end{proof}

\begin{cor}\label{cor-p5}
Let $E/\Q$ be an elliptic curve, let $p>2$ be a prime, and suppose that $\Q(E[p])/\Q$ is abelian. Then, the $\Q$-isogeny class of $E$ contains at least three distinct $\Q$-isomorphism classes, and $C_p(E)\geq 3$ (in the notation of Theorem \ref{thm-kenku}). In particular, $p\leq 5$.
\end{cor}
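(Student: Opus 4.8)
The plan is to combine Theorem~\ref{thm-diagonal} with Kenku's bounds in Theorem~\ref{thm-kenku}. First, since $p>2$ and $G:=\rho_{E,p}(\GQ)$ is abelian, Theorem~\ref{thm-diagonal} supplies a basis $\{P_1,P_2\}$ of $E[p]$ with respect to which $G$ consists of diagonal matrices. Then $C_1:=\langle P_1\rangle$ and $C_2:=\langle P_2\rangle$ are two distinct $\Gal(\overline{\Q}/\Q)$-stable subgroups of order $p$; together with the trivial subgroup they are three distinct $\Q$-rational cyclic subgroups of $p$-power order, so $C_p(E)\ge 3$. They also yield $\Q$-rational isogenies $\phi_i\colon E\to E_i:=E/C_i$ of degree $p$.

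The second step is to show that $E$, $E_1$, $E_2$ are pairwise non-isomorphic over $\Q$. The one input needed is that $\mathrm{End}_\Q(E)=\Z$ for every elliptic curve over $\Q$: this is immediate in the non-CM case, and if $E$ has CM by an order $\mathcal{O}$ in an imaginary quadratic field $K$ then the extra endomorphisms are defined only over $K$ and are swapped with their conjugates by $\Gal(K/\Q)$, so the subring fixed by $\Gal(\overline{\Q}/\Q)$ is again $\Z$. Granting this, if $\iota\colon E_i\xrightarrow{\sim}E$ were a $\Q$-isomorphism, then $\iota\circ\phi_i$ would be a $\Q$-rational endomorphism of $E$ of degree $p$, which is impossible since every element of $\Z$ acts with square degree and $p$ is prime. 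And if $\iota\colon E_1\xrightarrow{\sim}E_2$ were a $\Q$-isomorphism, then $\psi:=\iota\circ\phi_1$ and $\phi_2$ would be two $\Q$-rational degree-$p$ isogenies $E\to E_2$, so $\widehat{\phi_2}\circ\psi\in\mathrm{End}_\Q(E)$ would have degree $p^2$ and hence equal $[\pm p]=\pm\,\widehat{\phi_2}\circ\phi_2$; then $\widehat{\phi_2}\circ(\psi\mp\phi_2)=0$, and since $\widehat{\phi_2}$ has finite kernel this forces $\psi=\pm\phi_2$, whence $C_1=\ker\psi=\ker\phi_2=C_2$, a contradiction. Thus the $\Q$-isogeny class of $E$ contains at least three distinct $\Q$-isomorphism classes.

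Finally, the table in Theorem~\ref{thm-kenku} gives $C_p(E)\le 2$ for every prime $p\ge 7$, whereas we have just shown $C_p(E)\ge 3$; combined with $p>2$, this leaves only $p=3$ or $p=5$, so $p\le 5$. I expect the only delicate point to be ruling out $E_1\cong_\Q E_2$, i.e., showing that two independent $\Q$-rational $p$-isogenies out of $E$ cannot have $\Q$-isomorphic targets; the endomorphism-degree argument above disposes of it, and the rest is bookkeeping with $\mathrm{End}_\Q(E)=\Z$ and Kenku's bounds.
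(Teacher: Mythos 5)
Your proposal is correct and follows essentially the same route as the paper: Theorem \ref{thm-diagonal} gives a basis in which the image of $\rho_{E,p}$ is diagonal, the two resulting Galois-stable lines plus the trivial subgroup give $C_p(E)\geq 3$, and Kenku's bounds in Theorem \ref{thm-kenku} force $p\leq 5$. The only difference is that you also carefully verify, via the $\mathrm{End}_\Q(E)=\Z$ degree argument, that $E$, $E/\langle P_1\rangle$, $E/\langle P_2\rangle$ are pairwise non-isomorphic over $\Q$ — a point the paper's proof leaves implicit — and that verification is correct.
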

\begin{proof}
If $p>2$ and $\Q(E[p])/\Q$ is abelian, then Theorem \ref{thm-diagonal} says that there is a $\Z/p\Z$-basis $\{P,Q\}$ of $E[p]$, such that the image of $\rho_{E,p}$ is diagonalizable. Hence, $\langle P \rangle$ and $\langle Q\rangle$ are distinct Galois-stable cyclic subgroups of $E$. Therefore, $C_p(E)\geq 3$. By Theorem \ref{thm-kenku}, this is only possible if $p\leq 5$.
\end{proof}

\begin{prop}\label{prop-p2}
Let $E/\Q$ be an elliptic curve, let $p$ be a prime, and suppose that $\Q(E[p^2])/\Q$ is abelian. Then, $p=2$.
\end{prop}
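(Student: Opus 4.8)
The plan is to argue by contradiction. Suppose $p>2$ and $\Q(E[p^2])/\Q$ is abelian, and let $G\subseteq \GL(2,\Z/p^2\Z)$ denote the image of $\rho_{E,p^2}$, so that $G\cong \Gal(\Q(E[p^2])/\Q)$ is abelian. Then $G\bmod p$, which is the image of $\rho_{E,p}$, is abelian as well, so Theorem \ref{thm-diagonal} provides a basis of $E[p]$ in which $G\bmod p$ is diagonal; in particular $G\bmod p$ is contained in a Borel subgroup of $\GL(2,\Z/p\Z)$. Now Corollary \ref{cor-split} applies directly and yields a basis $\{P,Q\}$ of $E[p^2]$ with respect to which $G$ is contained in a split Cartan subgroup of $\GL(2,\Z/p^2\Z)$, i.e., with respect to which every element of $G$ is a diagonal matrix.

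The next step is an isogeny count. With respect to this basis, the five subgroups $\{\mathcal O\}$, $\langle pP\rangle$, $\langle pQ\rangle$, $\langle P\rangle$, $\langle Q\rangle$ are each cyclic of $p$-power order and stable under $\GQ$ (a diagonal group preserves the two coordinate lines, hence their cyclic subgroups), and they are pairwise distinct: they have orders $1,p,p,p^2,p^2$, and $pP$, $pQ$ span distinct lines in $E[p]$ since $\{pP,pQ\}$ is a basis of $E[p]$. Hence $C_p(E)\geq 5$, in the notation of Theorem \ref{thm-kenku}. In particular $\langle P\rangle$ is a $\Q$-rational cyclic subgroup of $E$ of order $p^2$, so $X_0(p^2)$ has a non-cuspidal $\Q$-rational point, and Theorem \ref{thm-ratnoncusps} forces $p^2\in\{4,9,25\}$, that is, $p\in\{3,5\}$. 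To finish, I would invoke the explicit bounds of Theorem \ref{thm-kenku}, namely $C_3(E)\leq 4$ and $C_5(E)\leq 3$, both of which contradict $C_p(E)\geq 5$. Therefore no prime $p>2$ can occur, and $p=2$.

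The only delicate point is the passage from ``the mod $p^2$ image is abelian'' to ``the mod $p^2$ image is actually \emph{diagonal}'' (and not merely contained in a Borel subgroup): abelianness mod $p$ only diagonalizes the representation modulo $p$, and one must rule out the possibility that the mod $p^2$ image is an abelian, non-split Borel subgroup. This is exactly what Corollary \ref{cor-split} (via Lemma \ref{lem-borel3}) takes care of, and once it is invoked the remainder is just the subgroup count above together with the citations to Kenku's bound and to the classification of rational points on $X_0(N)$. I do not anticipate any further obstacle.
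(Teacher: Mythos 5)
Your proof is correct and follows essentially the same route as the paper: diagonalize the mod $p^2$ image via Theorem \ref{thm-diagonal} and Corollary \ref{cor-split}, exhibit the five Galois-stable cyclic subgroups $\{\mathcal O\},\langle pP\rangle,\langle pQ\rangle,\langle P\rangle,\langle Q\rangle$ to get $C_p(E)\geq 5$, and contradict Kenku's bounds. The only (immaterial) difference is that you narrow down to $p\in\{3,5\}$ via the rational point on $X_0(p^2)$ and Theorem \ref{thm-ratnoncusps}, whereas the paper gets $p\leq 5$ directly from Corollary \ref{cor-p5}; both are valid.
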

\begin{proof}
Suppose that $\Q(E[p^{2}])/\Q$ is abelian, and $p>2$. In particular, $\Q(E[p])$ is abelian and Corollary \ref{cor-p5} says that $p\leq 5$. It follows that the image of $\rho_{E,p^2}$, call it $G$, is abelian, and Theorem \ref{thm-diagonal} implies that $G\bmod p$ is diagonal in some basis. Hence, Corollary \ref{cor-split} says that $G$ itself is also diagonalizable. Thus, there is a $\Z/p^2\Z$-basis $\{P,Q\}$ of $E[p^2]$ such that $\langle P\rangle$ and $\langle Q \rangle$ are $\GQ$-invariant. But then, $C_p(E)\geq 5$, as $\{\mathcal{O}\}$, $\langle P\rangle$, $\langle pP\rangle$, $\langle Q\rangle$, $\langle pQ\rangle$, are all invariant subgroups of order a power of $p$. This is impossible for $p=3$ or $5$ because $C_3(E)\leq 4$ and $C_5(E)\leq 3$, respectively. Thus, $p=2$.
\end{proof}

\begin{lemma}\label{lem-twists}
Let $E/\Q$ be an elliptic curve, and let $E'/\Q$ be a curve with $j(E)=j(E')$. Let $n> 2$, and put $G=\Gal(\Q(E[n])/\Q)$, $G'=\Gal(\Q(E'[n])/\Q)$. Then, 
\begin{enumerate}
\item If $j\neq 0,1728$, the curve $E'$ is a quadratic twist of $E$.
\item  If $E'$ is a quadratic twist of $E$, then either $G\cong G'$, or $G\cong G'\times \Z/2\Z$, or $G\times \Z/2\Z\cong G'$. If $n=2$, then $G\cong G'$. In particular, $G$ is abelian if and only if $G'$ is abelian.
\item If $G=\Gal(\Q(E[n])/\Q)$ is a Borel (resp.~split Cartan) subgroup of $\GL(2,\Z/n\Z)$, then $G'$ is also a Borel (resp.~split Cartan) subgroup. 
\end{enumerate}
\end{lemma}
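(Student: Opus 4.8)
The plan for part (1) is to invoke the standard classification of twists. Since $j(E)=j(E')\neq 0,1728$, the geometric automorphism group $\Aut_{\overline{\Q}}(E)$ equals $\{\pm 1\}$, so the set of $\Q$-isomorphism classes of elliptic curves that become isomorphic to $E$ over $\overline{\Q}$ is in bijection with $\Hc^1(\GQ,\{\pm1\})\cong\Q^\times/(\Q^\times)^2$, each class being represented by a quadratic twist $E^{(d)}$ of $E$ (see \cite{silverman}, Prop.~X.5.4). As $j(E')=j(E)$ forces $E'$ to become isomorphic to $E$ over $\overline{\Q}$, it follows that $E'$ is a quadratic twist of $E$.

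For part (2) I would first record the basic twisting identity. Write $E'\cong E^{(d)}$ for some $d\in\Q^\times$, fix an isomorphism $\phi\colon E\to E'$ defined over $F:=\Q(\sqrt d)$, and let $\chi_d\colon\GQ\to\{\pm1\}$ be the quadratic character cutting out $F$, so that $\phi^\sigma=\chi_d(\sigma)\phi$ for all $\sigma\in\GQ$ (this holds for any quadratic twist, regardless of $j$). Choosing a $\Z/n\Z$-basis $\{Q_1,Q_2\}$ of $E[n]$ and the induced basis $\{\phi(Q_1),\phi(Q_2)\}$ of $E'[n]$, a direct computation gives $\rho_{E',n}(\sigma)=\chi_d(\sigma)\,\rho_{E,n}(\sigma)$ in $\GL(2,\Z/n\Z)$, where $\chi_d(\sigma)=\pm1$ is viewed as a scalar matrix. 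When $n=2$ this reads $\rho_{E',2}=\rho_{E,2}$ (as $-1\equiv1\bmod 2$), so $\Q(E[2])=\Q(E'[2])$ and $G\cong G'$.

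For general $n$, set $L=\Q(E[n])$ and $L'=\Q(E'[n])$; the identity shows immediately that $L'\subseteq L(\sqrt d)$ and $L\subseteq L'(\sqrt d)$, hence $M:=L\cdot F=L'\cdot F$. The trichotomy then follows by splitting into cases according to whether $\sqrt d$ lies in $L$ and/or in $L'$. If $\sqrt d$ lies in both, then $L=M=L'$ and $G=G'$. If $\sqrt d\in L$ but $\sqrt d\notin L'$, then $L=L'\cdot F$ with $L'\cap F=\Q$ and both extensions Galois over $\Q$, so $G\cong G'\times\Z/2\Z$; the symmetric subcase gives $G'\cong G\times\Z/2\Z$. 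If $\sqrt d$ lies in neither, then $L\cap F=L'\cap F=\Q$, and restriction yields isomorphisms $\Gal(M/F)\cong G$ and $\Gal(M/F)\cong G'$, whence $G\cong G'$. The closing claim, that $G$ is abelian if and only if $G'$ is, then follows at once, since a finite group is abelian precisely when its direct product with $\Z/2\Z$ is.

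Part (3) follows from the same identity: if some basis of $E[n]$ makes the image of $\rho_{E,n}$ consist of upper triangular (resp.\ diagonal) matrices, then in the induced basis of $E'[n]$ the image of $\rho_{E',n}$ consists of scalar multiples of such matrices, which are again upper triangular (resp.\ diagonal); hence $G'$ is Borel (resp.\ split Cartan) whenever $G$ is. I expect the only points requiring care to be the clean derivation of the twisting identity $\rho_{E',n}=\chi_d\otimes\rho_{E,n}$ and the bookkeeping in the four cases of part (2) --- in particular the observation that, when $\sqrt d$ belongs to neither division field, both $G$ and $G'$ are identified by restriction with $\Gal(L(\sqrt d)/\Q(\sqrt d))$; everything else is routine.
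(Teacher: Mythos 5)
Your proposal is correct and follows essentially the same route as the paper: the paper cites the standard classification of twists for part (1) and declares parts (2) and (3) ``immediate'' from the twisting identity $\rho_{E',n}=\chi_d\otimes\rho_{E,n}$, which is exactly the identity you derive and then exploit. You have merely written out the details the paper leaves to the reader (the $\Hc^1(\GQ,\{\pm1\})$ argument for part (1) and the four-case field-theoretic bookkeeping for part (2)), and all of these steps check out.
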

\begin{proof}
The first part can be found, for instance, in Lemma 9.6 of \cite{lozano0}. The rest is all immediate from the equation $\phi\circ \rho_{E',m}\circ \phi^{-1}=\rho_{E,m}^{\xi^{-1}}=[\xi^{-1}]\circ \rho_{E,m}$ that relates the representation on $E'[m]$ with that of $E[m]$, where $\phi \colon E'\to E$ is the $\Q(\sqrt{D})$-isomorphism between $E'$ and $E$, and $\xi(\sigma)=\sqrt{D}/\sigma(\sqrt{D})$, where $E'$ is the quadratic twist of $E$ by $D\in\Q^\times$.
\end{proof}

\begin{lemma}\label{lem-6abelian}
If $\Q(E[6])/\Q$ is abelian, then $\Gal(\Q(E[2])/\Q)\cong \Z/2\Z$.
\end{lemma}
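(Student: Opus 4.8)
The plan is to pin down $G_2:=\Gal(\Q(E[2])/\Q)$ by eliminating the undesired possibilities. Since $\Q(E[2])\subseteq\Q(E[6])$ and the latter is abelian over $\Q$, the group $G_2$ is an abelian subgroup of $\GL(2,\Z/2\Z)\cong S_3$; as the only non-abelian subgroup of $S_3$ is $S_3$ itself, we get $G_2\cong\{0\}$, $\Z/2\Z$, or $\Z/3\Z$, and it suffices to rule out the first and the third.

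For the case $G_2\cong\{0\}$ I would argue by isogeny counting. Here $E(\Q)[2]\cong(\Z/2\Z)^2$, so the identity subgroup and the three subgroups of order $2$ of $E$ are all $\Q$-rational, whence $C_2(E)\geq 4$. Also $\Q(E[3])/\Q$ is abelian, being a subfield of $\Q(E[6])$, so Corollary \ref{cor-p5} gives $C_3(E)\geq 3$. Then $C(E)=\prod_p C_p(E)\geq C_2(E)\,C_3(E)\geq 12>8$, contradicting Kenku's bound (Theorem \ref{thm-kenku}).

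For the case $G_2\cong\Z/3\Z$ the image of $\rho_{E,2}$ is the subgroup $A_3$ of $S_3$ of order $3$, which acts on $E[2]\setminus\{\mathcal{O}\}$ by even permutations; hence $\Delta_E\in(\Q^\times)^2$ (and $E$ has no $\Q$-rational $2$-isogeny). On the other hand $\Q(E[3])/\Q$ is abelian, so by Theorem \ref{thm-diagonal} there is a basis of $E[3]$ in which $\rho_{E,3}=\mathrm{diag}(\psi_1,\psi_2)$, with $\psi_i\colon\GQ\to(\Z/3\Z)^\times=\{\pm1\}$ and $\psi_1\psi_2=\chi_3$, the quadratic character cutting out $\Q(\sqrt{-3})$; in particular $\psi_1,\psi_2$ are not both trivial. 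Replacing $E$ by its quadratic twist by $\psi_1$ changes neither $\rho_{E,2}$ (the scalar $\pm1$ being trivial mod $2$) — hence neither $G_2$ nor the square class of $\Delta_E$ — and turns $\rho_{E,3}$ into $\mathrm{diag}(1,\chi_3)$; so we may assume $E$ has a $\Q$-rational point of order $3$ and $\Q(E[3])=\Q(\zeta_3)$. It then remains to derive a contradiction from the following situation: \emph{$E/\Q$ has a rational $3$-torsion point, $\Q(E[3])=\Q(\zeta_3)$, and $\Delta_E\in(\Q^\times)^2$.}

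For this last point I would take the model $E\colon y^2+a_1xy+a_3y=x^3$ with the $3$-torsion point at $(0,0)$, so that $\Delta_E=a_3^3(a_1^3-27a_3)$ and the $x$-coordinates of the remaining six nonzero $3$-torsion points are the roots of $g(x)=3x^3+a_1^2x^2+3a_1a_3x+3a_3^2$. Since $\Q(E[3])=\Q(\zeta_3)$, the representation $\rho_{E,3}$ is diagonal with a rational eigenline, so there is a second $\GQ$-stable line in $E[3]$, whose (common) $x$-coordinate is a rational root $x_0$ of $g$. Solving $g(x_0)=0$ for $a_1$ (a quadratic whose discriminant is a square times $-3(a_3^2+4x_0^3)$) and substituting back, a direct computation gives $\Delta_E=27\bigl(a_3(a_3+t)/(2x_0)\bigr)^3$ for the $t\in\Q$ determined by $a_3^2+3t^2=-4x_0^3$. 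Hence $\Delta_E\in(\Q^\times)^2$ forces $3a_3(a_3+t)/(2x_0)$ to be a square, and eliminating $x_0$ leaves a single Diophantine equation in $a_3$ and $t$ which I expect to admit no solutions with $\Delta_E\neq 0$. This verification — showing that a certain explicit curve of genus $\geq 1$ carries only degenerate rational points — is the main obstacle of the proof; alternatively one may invoke Paladino's classification \cite{paladino} of the curves with $\Q(E[3])=\Q(\zeta_3)$ and check directly that no member has square discriminant. With the two excluded cases eliminated, $G_2\cong\Z/2\Z$, as claimed.
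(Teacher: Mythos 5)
Your elimination of the case $G_2=\Gal(\Q(E[2])/\Q)$ trivial is exactly the paper's argument ($C_2(E)\geq 4$, $C_3(E)\geq 3$, so $C(E)\geq 12>8$ against Kenku's bound) and is correct. The problem is the case $G_2\cong\Z/3\Z$. Your reduction there is sound: twisting by $\psi_1$ preserves $\rho_{E,2}$ and the square class of $\Delta_E$ while forcing $\rho_{E,3}=\mathrm{diag}(1,\chi_3)$, so it does suffice to rule out curves with a rational $3$-torsion point, $\Q(E[3])=\Q(\zeta_3)$, and square discriminant. But at exactly that point you write that the resulting Diophantine condition is one you ``expect'' to have no nondegenerate solutions, and you yourself flag this verification as ``the main obstacle of the proof.'' That unverified claim is where all the content of the lemma lives, so as written the proof has a genuine gap; the alternative route through Paladino's classification is mentioned but also not carried out.

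For comparison, the paper closes this step as follows: since $\Q(E[3])/\Q$ abelian forces a split-Cartan (diagonalizable) mod-$3$ image, $j(E)$ lies on $X_s(3)$, whose $j$-line coincides with that of $X(3)$; hence $E$ is a quadratic twist of the universal curve $E':y^2=x^3-27t_0(t_0^3+8)x+54(t_0^6-20t_0^3-8)$, whose discriminant is $2^{12}3^{9}(t_0^3-1)^3$. Squareness of $\Delta_{E'}$ then amounts to a rational point on $v^2=3(u^3-1)$, which is isomorphic to the rank-zero elliptic curve \texttt{36a3} with Mordell--Weil group $\{\mathcal O,(3,0)\}$, leaving only the degenerate value $t_0=1$. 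Your explicit model $y^2+a_1xy+a_3y=x^3$ would very likely lead to an equivalent rank-zero computation, but until that computation (or the check against Paladino's list) is actually performed, the case $G_2\cong\Z/3\Z$ is not excluded and the lemma is not proved.
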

\begin{proof}
Suppose $\Gal(\Q(E[6])/\Q)$ is abelian. We will show in Section \ref{sec-cm} that this is impossible for CM curves, so we may assume in particular that $j\neq 0,1728$. Then, $\Gal(\Q(E[2])/\Q)$ is itself abelian, and therefore either trivial, or isomorphic to $\Z/2\Z$ or $\Z/3\Z$. If $\Gal(\Q(E[2])/\Q)$ is trivial, then $\Q(E[2])=\Q$ and all $2$-torsion points are rational. In particular, $C_2(E)\geq 4$ (with notation as in Theorem \ref{thm-kenku}). If in addition $\Q(E[3])$ is abelian, then Theorem \ref{thm-diagonal} implies that $C_3(E)\geq 3$. Thus, we would have $C(E)\geq C_2(E)\cdot C_3(E)\geq 12>8$, which contradicts Theorem \ref{thm-kenku}. Hence, this is impossible.

It remains to eliminate the possibility of $\Gal(\Q(E[2])/\Q)\cong \Z/3\Z$ and $\Q(E[3])$ abelian. Let us assume this occurs for a contradiction. Since $\Q(E[3])$ is abelian over $\Q$, Theorem \ref{thm-diagonal} says that $j(E)$ appears as a non-cuspidal point in $X_s(3)$, the modular curve associated to a split Cartan image for $p=3$.  The curve $X_s(3)$ is of genus $0$ (see \cite{zywina1}) and the $j$-line $j \colon X_s(3)\to \PP^1(\Q)$ is equal to the one for $X(3)$ (see Section \ref{sec-Q3} below). Since $j\neq 0,1728$, Lemma \ref{lem-twists} implies that there exists some $t_0\in\Q$ such that $E$ is a quadratic twist of the elliptic curve $E'$ with Weierstrass model:
$$
E:y^2=x^3-27t_0(t_0^3+8)x+54(t_0^6-20t_0^3-8).
$$
Moreover, $E'$ also satisfies that $\Gal(\Q(E'[3])/\Q)$ is abelian, and $\Gal(\Q(E'[2])/\Q)\cong \Z/3\Z$. In particular, we must have that the discriminant of $E'/\Q$ is a perfect square. Since $\Delta_{E'}=2^{12}3^{9}(t_0^3-1)^3$, this means that there is $v_0\in \Q$ such that $v_0^2=3(t_0^3-1)$. The curve $C:v^2=3(u^3-1)$ is $\Q$-isomorphic to $E'':y^2=x^3-27$ by the change of variables $(x,y)=3(u,v)$ and $E''$ is the elliptic curve  \texttt{36a3} that satisfies $E''(\Q)=\{\mathcal{O},(3,0)\}$. Thus, the only affine point on $C$ is $(1,0)$, and therefore, $E'$ cannot exist and neither can $E/\Q$ with the given properties. This shows that the only possibility when $\Q(E[3])/\Q$ is abelian is  $\Gal(\Q(E[2])/\Q)\cong \Z/2\Z$.
\end{proof}

\begin{lemma}\label{lem-10abelian}
The extension $\Q(E[10])/\Q$ is never abelian for an elliptic curve $E$ defined over the rationals.
\end{lemma}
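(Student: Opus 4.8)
The plan is to first observe that $\Q(E[10])=\Q(E[2])\cdot\Q(E[5])$ is a compositum of two Galois extensions of $\Q$, so that $\Gal(\Q(E[10])/\Q)$ embeds into $\Gal(\Q(E[2])/\Q)\times\Gal(\Q(E[5])/\Q)$; hence $\Q(E[10])/\Q$ is abelian \emph{if and only if} both $\Q(E[2])/\Q$ and $\Q(E[5])/\Q$ are abelian. It therefore suffices to prove that no elliptic curve $E/\Q$ has both $\Q(E[2])/\Q$ and $\Q(E[5])/\Q$ abelian. As in Lemma~\ref{lem-6abelian}, the CM curves are excluded separately --- this follows from (the proof of) Theorem~\ref{thm-maincm0} in Section~\ref{sec-cm}, which shows that $\Q(E[5])/\Q$ is never abelian for a CM curve --- so we may assume in what follows that $E$ has no complex multiplication, and in particular that $j(E)\neq 0,1728$.

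Next I would extract the consequences of $\Q(E[5])/\Q$ being abelian. By Theorem~\ref{thm-diagonal}, the image of $\rho_{E,5}$ is, in a suitable $\Z/5\Z$-basis of $E[5]$, contained in the diagonal (split Cartan) subgroup of $\GL(2,\Z/5\Z)$; in particular $E$ admits two independent $\Q$-rational cyclic subgroups of order $5$, so $C_5(E)\geq 3$. By Kenku's Theorem~\ref{thm-kenku} this forces $C_5(E)=3$ and $\prod_{p\neq 5}C_p(E)=C(E)/C_5(E)\leq 8/3<3$, and in particular $C_2(E)\leq 2$. On the other hand $\Gal(\Q(E[2])/\Q)$ is an abelian subgroup of $\GL(2,\Z/2\Z)\cong S_3$, hence trivial, $\Z/2\Z$, or $\Z/3\Z$; the trivial case makes all of $E[2]$ rational and gives $C_2(E)\geq 4$, a contradiction. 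So it remains to rule out the cases $\Gal(\Q(E[2])/\Q)\cong\Z/2\Z$ and $\Gal(\Q(E[2])/\Q)\cong\Z/3\Z$, always under the standing hypothesis that $\Q(E[5])/\Q$ is abelian.

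For this last step I would argue with explicit modular curves, in the spirit of the proof of Lemma~\ref{lem-6abelian}. Since $\Q(E[5])/\Q$ is abelian, Theorem~\ref{thm-diagonal} places $j(E)$ on the split Cartan modular curve $X_s(5)$ (equivalently, $E$ has two independent rational $5$-isogenies), which has genus $0$ with infinitely many rational points and an explicit $j$-map $j=J(t)$, $t\in\PP^1(\Q)$ (see \cite{zywina1} and Section~\ref{sec-Q3}); by Lemma~\ref{lem-twists}, $E$ is then a quadratic twist of an explicit Weierstrass model $E_t$ with $j(E_t)=J(t)$. Both remaining conditions on $E$ are invariant under quadratic twisting, hence conditions on $t$ alone: $\Gal(\Q(E[2])/\Q)\cong\Z/3\Z$ is equivalent to the $2$-division cubic of $E_t$ being irreducible over $\Q$ with square discriminant, i.e.\ to $\Delta_{E_t}$ being a nonzero square in $\Q$; and $\Gal(\Q(E[2])/\Q)\cong\Z/2\Z$ is equivalent to $E_t$ having a rational point of order $2$, i.e.\ to $j(E)\in X_0(2)(\Q)$, i.e.\ to $J(t)=(u+256)^3/u^2$ for some $u\in\Q$. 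The first condition cuts out an auxiliary curve $\mathcal{C}_3\colon s^2=\Delta_{E_t}(t)$ (after removing square factors), and the second an auxiliary curve $\mathcal{C}_2$, the fibre product of $X_s(5)$ with $X_0(2)$ over the $j$-line. I would then compute the genus of $\mathcal{C}_2$ and $\mathcal{C}_3$ and determine their rational points --- either they have genus $0$ and no $\Q$-rational points (e.g.\ by a local obstruction), or genus $1$ and are rank-$0$ elliptic curves whose finitely many rational points are checked one by one, or genus $\geq 2$ and Faltings' theorem reduces matters to a finite search --- showing in every case that no resulting $E/\Q$ actually has both $\Q(E[2])/\Q$ and $\Q(E[5])/\Q$ abelian.

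The main obstacle I expect is the explicit resolution in this last step: producing the model $E_t$ over $X_s(5)$ and then controlling $\mathcal{C}_2(\Q)$ and $\mathcal{C}_3(\Q)$. In particular $\mathcal{C}_2$ is a degree-$3$ cover of $X_s(5)\cong\PP^1$, so its genus may well be $\geq 2$, in which case determining its rational points requires a Chabauty/Mordell--Weil-sieve style argument rather than a one-line rank computation; and throughout one must take care to treat separately the values $j=0,1728$, the CM curves, and any sporadic rational points on the $\mathcal{C}_i$, verifying directly in each such case that $\Q(E[2])/\Q$ or $\Q(E[5])/\Q$ fails to be abelian.
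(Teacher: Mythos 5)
Your reduction is the same as the paper's: exclude CM, note that both $\Q(E[2])/\Q$ and $\Q(E[5])/\Q$ must be abelian, use Theorem \ref{thm-diagonal} to place $j(E)$ on $X_s(5)$, and split into cases according to $\Gal(\Q(E[2])/\Q)\in\{1,\Z/2\Z,\Z/3\Z\}$. Your disposal of the trivial case via Kenku ($C_2(E)\geq 4$, $C_5(E)\geq 3$, so $C(E)\geq 12>8$) is valid and is exactly the argument the paper uses in the analogous step of Lemma \ref{lem-6abelian}. The divergence, and the place where your write-up falls short of a proof, is in the two remaining cases, both of which you reduce to ``determine the rational points of an auxiliary curve'' without carrying out the determination.

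For the case $\Gal(\Q(E[2])/\Q)\cong\Z/2\Z$ you propose computing the rational points of the fibre product of $X_s(5)$ with $X_0(2)$ over the $j$-line, and you correctly anticipate that this could require a Chabauty-type argument. The paper avoids this entirely with a short isogeny argument that also absorbs the trivial case: a rational point $P$ of order $2$ together with two independent Galois-stable subgroups $\langle Q_1\rangle,\langle Q_2\rangle$ of order $5$ (supplied by Corollary \ref{cor-p5}) produces a rational cyclic isogeny of degree $50$, namely $E/\langle P+Q_1\rangle\to E/\langle Q_1\rangle\to E\to E/\langle Q_2\rangle$, and $N=50$ does not appear in Theorem \ref{thm-ratnoncusps}. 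This is the idea missing from your proposal; with it, no new curve computation is needed for this case. For the case $\Gal(\Q(E[2])/\Q)\cong\Z/3\Z$ your plan coincides with the paper's: the square-discriminant condition on the twist-invariant model over $X_s(5)$ cuts out a hyperelliptic curve, which turns out to have genus $3$, namely $C:y^2=x(x^2+2x+5)(x^4+5x^3+15x^2+25x+25)$. The paper resolves $C(\Q)$ not by Chabauty but by exhibiting the involution $\tau(x,y)=(5/x,\,25y/x^4)$ and the induced degree-$2$ map to the quotient elliptic curve $v^2=u^3+u^2-u$ (curve \texttt{20a2}, with Mordell--Weil group $\Z/6\Z$), then pulling back the six rational points to find that $C(\Q)$ consists only of the two points at infinity, which lie over cusps. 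Until you actually produce the model, compute this curve, and pin down its rational points (by this quotient trick or otherwise), the argument is a program rather than a proof; but the program is the right one, and the only structural improvement to import from the paper is the degree-$50$ isogeny shortcut.
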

\begin{proof}
We will show in Section \ref{sec-cm} that this is impossible for CM curves, so we may assume in particular that $j\neq 0,1728$. Let us suppose for a contradiction that $\Q(E[10])/\Q$ is abelian for an elliptic curve $E/\Q$. Then, $\Q(E[2])/\Q$ and $\Q(E[5])/\Q$ are abelian. In particular, $\Gal(\Q(E[2])/\Q)$ is trivial, or isomorphic to $\Z/2\Z$, or $\Z/3\Z$. In the first two cases, $E$ has a non-trivial rational $2$-torsion point $P\in E(\Q)[2]$, and therefore a $2$-isogeny $E\to E/\langle P \rangle$. Since $ \Q(E[5])/\Q$ is abelian, Corollary \ref{cor-p5} implies that $E$ contains two distinct non-trivial, Galois-stable, cyclic subgroups of order $5$, say $\langle Q_1\rangle$ and $\langle Q_2\rangle$. Then,
$$E/\langle P+Q_1\rangle \stackrel{2}{\longrightarrow} E/\langle Q_1\rangle \stackrel{5}{\longrightarrow} E \stackrel{5}{\longrightarrow} E/\langle Q_2 \rangle$$
is an isogeny of degree $50$, which cannot exist by Theorem \ref{thm-ratnoncusps}. Hence, $\Gal(\Q(E[2])/\Q)$ cannot be trivial or $\Z/2\Z$. 

Suppose then $\Gal(\Q(E[2])/\Q)\cong \Z/3\Z$ for some elliptic curve $y^2=f(x)$, where $f(x)$ is an irreducible cubic polynomial with distinc roots (over $\CC$). Then, $\Gal(\Q(E[2])/\Q)=\Gal(f)=\Z/3\Z$ and this occurs if and only if $\operatorname{Disc}(f)$ is a perfect square. Since $\Q(E[5])/\Q$ is abelian, it follows from Theorem \ref{thm-diagonal} that $j(E)$ appears as a non-cuspidal point in $X_s(5)$, i.e.,~the modular curve associated to the split-Cartan image for $p=5$. The curve $X_s(5)$ is of genus $0$ (see \cite{zywina1}; see also Section \ref{sec-Q5} below), and the $j$-line $j\colon X_s(5)\to \PP^1(\Q)$ can be given as follows
$$j_{X_s(5)}(t)=j(t):=\frac{(t^2+5t+5)^3(t^4+5t^2+25)^3(t^4+5t^3+20t^2+25t+25)^3}{(t(t^4+5t^3+15t^2+25t+25))^5}.$$
Thus, $j(E)=j(t_0)$ for some $t_0\in\Q^*$. It follows that $E$ is a quadratic twist of the curve
$$E': y^2 + xy = x^3 - \frac{36}{(j(t_0) - 1728)}x - \frac{1}{(j(t_0) - 1728)}$$ 
with $j(E')=j(t_0)$.  Since $E$ and $E'$ are quadratic twists of each other, it follows from Lemma \ref{lem-twists} that $E'$ also satisfies $\Gal(\Q(E'[5])/\Q)$ is abelian, and $\Gal(\Q(E'[2])/\Q)\cong \Z/3\Z$. In particular, the discriminant of $E'$ must be a perfect square. In this case we have 
$$
\mbox{\small
$\Delta_{E'}=\frac{(t_0(t_0^4 + 5 t_0^3 + 15t_0^2 + 25t_0 + 25))^5}{(t_0^2 + 2t_0 + 5)^3}
\left(\frac{(t_0^2 + 5t_0 + 5)(t_0^4 + 5 t_0^2 + 25)(t_0^4 + 5t_0^3 + 20t_0^2 + 25t_0 + 25)}{(t_0^2 - 5)(t_0^4 + 15t_0^2 + 25)(t_0^4 + 4t_0^3 + 9t_0^2 + 10t_0 + 5)(t_0^4 + 10t_0^3 + 45t_0^2 + 100t_0 + 125)}\right)^6$}
%\frac{(t_0(t_0^4 + 5 t_0^3 + 15t_0^2 + 25t_0 + 25))^5((t_0^2 + 5t_0 + 5)(t_0^4 + 5 t_0^2 + 25)(t_0^4 + 5t_0^3 + 20t_0^2 + 25t_0 + 25))^6}{(t_0^2 + 2t_0 + 5)^3((t_0^2 - 5)(t_0^4 + 15t_0^2 + 25)(t_0^4 + 4t_0^3 + 9t_0^2 + 10t_0 + 5)(t_0^4 + 10t_0^3 + 45t_0^2 + 100t_0 + 125))^6}$}
$$
This implies that there is $y_0\in\Q$ such that
$$
y_0^2=t_0(t_0^2 + 2t_0 + 5)(t_0^4 + 5 t_0^3 + 15t_0^2 + 25t_0 + 25).
$$
In other words, $(t_0,y_0)$ is an affine point on the hyperelliptic genus $3$ curve:
$$
C\,:\,y^2=x(x^2 + 2x + 5)(x^4 + 5 x^3 + 15x^2 + 25x + 25).
$$
Note that $(1: 0:0),(0:0:1)\in C(\Q)$ and $C$ has the automorphism $\tau$ of degree $2$ given by 
$$
\tau\,:\, C \longrightarrow C,\qquad  \tau(x,y)=\left(\frac{5}{x},\frac{25y}{x^4}\right).
$$
Therefore, we have a degree $2$ morphism:
$$
\pi\,:\, C \longrightarrow E''=C/\langle\tau\rangle,\qquad \pi(x,y)=\left(\frac{x^2+2x+5}{x},-\frac{y}{x^2}\right),
$$
where $E''\,:\, v^2 = u^3 + u^2 - u$ and $\pi(\{(1: 0:0),(0: 0:1)\})=\mathcal O\in E''(\Q)$. Now, $E''$ is the elliptic curve \texttt{20a2} that satisfies $E''(\Q)\simeq \Z/6\Z$. In particular:
$$
E''(\Q)=\{(\pm 1,:\pm 1:1),(0:0:1)\}\cup \{\mathcal O\}.
$$
It is an straightforward computation to check that $\pi^{-1}(E''(\Q))\cap C(\Q)=\{(1: 0: 0),(0: 0:1)\}$. From here it follows that $C(\Q)=\{(1 : 0 : 0), (0 : 0 : 1)\}$. Hence, the only rational points on $C$ correspond to cusps of $X_s(5)$. We conclude that $E'$ cannot exist, and neither does $E$, i.e.,~$\Gal(\Q(E[10])/\Q)$ cannot be abelian for an elliptic curve $E/\Q$.
\end{proof}

\section{Abelian division fields in CM curves}\label{sec-cm}

In this section we study elliptic curves $E/\Q$ with complex multiplication by an order in an imaginary quadratic field $K=\Q(\sqrt{-d})$, and characterize, for each isomorphism class, when $\Q(E[n])$ is abelian, and when $\Q(E[n])=\Q(\zeta_n)$. Our results are summarized in Theorem \ref{thm-maincm} and Table 1 in Section \ref{table1}, below. 

\begin{lemma}\label{lem-j0} Let $E/\Q$ be an elliptic curve with $j=0$ and given by a model $y^2=x^3+s$ over $\Q$. Then, $E/\Q$ has a rational $3$-isogeny. In addition,
\begin{enumerate}
\item $E/\Q$ has a rational $2$-isogeny if and only if $s=t^3$, for some non-zero integer $t$, or equivalently, if $E/\Q$ is a quadratic twist of $y^2=x^3+1$.
\item $E/\Q$ has two distinct rational $3$-isogenies if and only if $s=2t^3$, for some non-zero integer $t$, or equivalently, if $E/\Q$ is a quadratic twist of $y^2=x^3+2$.
\end{enumerate}
\end{lemma}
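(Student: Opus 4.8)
The plan is to extract everything from the $3$-division polynomial of $E:y^2=x^3+s$ (here $s\in\Q^\times$, since $E$ is nonsingular) together with the classification of sextic twists of curves with $j=0$. For the first assertion I would exhibit the point $P_0=(0,\sqrt{s})$ and check directly that it has order $3$: the tangent line to $E$ at $P_0$ is $y=\sqrt{s}$, which meets $E$ only at $P_0$, with multiplicity three, so $2P_0=-P_0$ and $3P_0=\mathcal{O}$. The subgroup $C_0=\{\mathcal{O},(0,\sqrt{s}),(0,-\sqrt{s})\}$ is precisely the set of points of $E$ with $x$-coordinate $0$, hence is stable under $\Gal(\overline{\Q}/\Q)$, so $E\to E/C_0$ is a rational $3$-isogeny.

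For parts $(1)$ and $(2)$ I would first record the elementary dictionary between rational isogenies of prime degree $\ell$ and rational kernels: a subgroup of order $\ell$ has the form $\{\mathcal{O},P,\dots,(\ell-1)P\}$, and since any Galois conjugate of $P$ is again a point of order $\ell$ with $x$-coordinate $\sigma(x(P))$, this subgroup is $\Gal(\overline{\Q}/\Q)$-stable exactly when $x(P)\in\Q$ (for $\ell=2$ this forces $P$ itself to be rational, for $\ell=3$ the pair $\{P,-P\}$ to be defined over $\Q$). Thus rational $2$-isogenies correspond to rational roots of $x^3+s$, and rational $3$-isogenies to rational roots of the $3$-division polynomial, which for $y^2=x^3+s$ is $\psi_3(x)=3x(x^3+4s)$ (I would recall this, or derive it in one line from the duplication formula). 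For $(1)$: $x^3+s$ has a rational root iff $-s$, equivalently $s$, is a cube in $\Q^\times$; if moreover $s\in\Z$, the rational root theorem forces the cube root to be an integer $t$, giving $s=t^3$. For $(2)$: the root $x=0$ of $\psi_3$ recovers $C_0$, while a second rational $3$-isogeny requires a rational root $x_0$ of $x^3+4s$; a cubic with one rational root has its remaining roots among the zeros of a quadratic of discriminant $-3x_0^2<0$, so there is at most one such root, and it is $\neq 0$. Hence $E$ has two distinct rational $3$-isogenies iff $x^3+4s$ has a rational root iff $4s$ is a cube in $\Q^\times$; writing $4s=v^3$ gives $s=2(v/2)^3$, while $s=2t^3$ gives $4s=(2t)^3$, and again $s\in\Z$ forces $t\in\Z$ by a short coprimality argument.

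For the two ``equivalently'' clauses I would use that the quadratic twist of $y^2=x^3+a$ by $d\in\Q^\times$ has the $\Q$-model $y^2=x^3+ad^3$ (apply $X=dx,\ Y=d^2y$), and that $y^2=x^3+a$ and $y^2=x^3+b$ are $\Q$-isomorphic iff $a/b\in(\Q^\times)^6$ (the only isomorphisms of short Weierstrass models are $(x,y)\mapsto(u^2x,u^3y)$). Then $s=t^3$ is equivalent to $E$ being $\Q$-isomorphic to the twist $(y^2=x^3+1)^{(t)}$, and $s=2t^3$ to $E$ being a quadratic twist of $y^2=x^3+2$.

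I do not expect a genuine obstacle here; the part that needs the most care is the bookkeeping with cubes in the last two items — distinguishing ``a cube in $\Q^\times$'' from ``a cube of an integer'', which is handled by the rational root theorem once one normalizes $s$ to an integer — and confirming that the (at most) two rational $3$-isogenies that can occur are genuinely distinct, which is immediate from comparing $x=0$ with $x^3=-4s\neq0$.
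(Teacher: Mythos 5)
Your proposal is correct and follows essentially the same route as the paper, whose entire proof is the one-line observation that the statement is immediate from the polynomials $x^3+s$ and $\psi_3(x)=x(x^3+4s)$ cutting out the $2$-torsion and the $x$-coordinates of the $3$-torsion. You simply supply the details the paper leaves implicit (Galois-stability of the kernels, the cube/integer bookkeeping, and the sextic-twist classification for the ``equivalently'' clauses), all of which check out.
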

\begin{proof}
This is immediate from the formulas for the $2$nd and $3$rd division polynomials, $\psi_2$ and $\psi_3$, for the surface $y^2=x^3+s$, which are given by
$$\psi_2(x)= x^3+s, \ \text{ and } \ \psi_3(x)=x(x^3+4s).$$
\end{proof}

\begin{lemma}\label{lem-j1728} Let $E/\Q$ be an elliptic curve with $j=1728$ and given by a model $y^2=x^3+sx$ over $\Q$. Then, $E/\Q$ has a rational $2$-isogeny. In addition,
\begin{enumerate}
\item $E/\Q$ has three rational $2$-isogenies if and only if $s=-t^2$, for some non-zero integer $t$, or equivalently, if $E/\Q$ is a quadratic twist of $y^2=x^3-x$.
\item $E/\Q$ has a rational $4$-isogeny if and only if $s=t^2$, for some non-zero integer $t$, or equivalently, if $E/\Q$ is a quadratic twist of $y^2=x^3+x$.
\end{enumerate}
\end{lemma}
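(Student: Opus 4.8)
The plan is to follow the template of the proof of Lemma~\ref{lem-j0}: translate each isogeny condition for the model $E:y^2=x^3+sx=x(x^2+s)$, $s\in\Q^\times$, into the existence of a rational root of a suitable factor of a division polynomial, and then read off the conclusion. First, the $x$-coordinates of the nonzero $2$-torsion of $E$ are the roots of $\psi_2(x)=x^3+sx=x(x^2+s)$, namely $0$ and $\pm\sqrt{-s}$. Since $(0,0)\in E(\Q)$, the subgroup $\langle(0,0)\rangle$ is $\Q$-rational and $E\to E/\langle(0,0)\rangle$ is the asserted rational $2$-isogeny. Moreover $E$ has three rational $2$-isogenies exactly when all three nonzero $2$-torsion points are rational, i.e.\ when $x^2+s$ splits over $\Q$, i.e.\ when $-s$ is a square in $\Q$; and $y^2=x^3-t^2x$ is precisely the quadratic twist of $y^2=x^3-x$ by $t$, while conversely every quadratic twist of $y^2=x^3-x$ has this shape. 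This gives part~(1).

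For part~(2) I would use that a rational $4$-isogeny is the same datum as a $\Gal(\overline{\Q}/\Q)$-stable cyclic subgroup $C\le E$ of order $4$. If $Q$ generates such a $C$, then $Q$ and $-Q$ are the only generators of $C$ and they have the same $x$-coordinate; since Galois must send $Q$ to another generator of $C$, it fixes $x(Q)$, so $x(Q)\in\Q$. Conversely, if $Q$ has order $4$ and $x(Q)\in\Q$, then $\sigma(Q)\in\{Q,-Q\}$ for every $\sigma$, so $\langle Q\rangle$ is $\Q$-rational. Hence $E$ has a rational $4$-isogeny if and only if some point of exact order $4$ has rational $x$-coordinate, equivalently the degree-$6$ polynomial $\psi_4(x)/\psi_2(x)$, whose roots are exactly those $x$-coordinates, has a rational root. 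The standard formula for $\psi_4$ on $y^2=x^3+sx$ factors as $\psi_4(x)/\psi_2(x)=2(x^2-s)(x^4+6sx^2+s^2)$, where $x^2-s$ accounts for the points halving $(0,0)$ and the quartic for the points halving $(\pm\sqrt{-s},0)$; this can also be checked directly from the duplication formula on $y^2=x^3+sx$.

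To finish part~(2) it remains to observe that $x^4+6sx^2+s^2$ has no rational root: a rational root $x_0$ would be nonzero and, read as a quadratic in $s$, would force $s=x_0^2(-3\pm 2\sqrt2)\notin\Q$, a contradiction. Therefore $\psi_4(x)/\psi_2(x)$ has a rational root if and only if $x^2-s$ does, i.e.\ if and only if $s$ is a square in $\Q$; and $y^2=x^3+t^2x$ is the quadratic twist of $y^2=x^3+x$ by $t$, with the converse as before, which proves part~(2). I do not expect a genuine obstacle here; the one point that needs care is the bookkeeping in part~(2)—that a rational $4$-isogeny is detected by a \emph{single} rational root of $\psi_4/\psi_2$ (because the two order-$4$ generators of a cyclic subgroup share an $x$-coordinate), rather than by full rationality of a $4$-torsion point—together with the elementary verification that the quartic factor never contributes such a root.
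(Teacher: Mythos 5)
Your proof is correct and follows essentially the same route as the paper, which simply records the factorizations $\psi_2(x)=x(x^2+s)$ and $\psi_4(x)/\psi_2(x)=(x^2-s)(x^4+6sx^2+s^2)$ and declares the lemma immediate. You have merely supplied the routine details the paper leaves implicit (that a rational cyclic $4$-isogeny corresponds to a rational root of $\psi_4/\psi_2$, and that the quartic factor has no rational roots), all of which check out.
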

\begin{proof}
This is immediate from the formulas for the $2$nd and $4$th division polynomials, $\psi_2$ and $\psi_4$, for the surface $y^2=x^3+sx$, which are given by
$$\psi_2(x)= x(x^2+s), \ \text{ and } \ \psi_4(x)=x(x^2-s)(x^2+s)(x^4+6sx^2+s^2),$$
so that $\psi_4(x)/\psi_2(x)=(x^2-s)(x^4+6sx^2+s^2)$.
\end{proof}

\begin{thm}\label{thm-maincm} Let $E/\Q$ be an elliptic curve with complex multiplication by an imaginary quadratic field $K$, with discriminant $d_K$. If there is an integer $n\geq 2$  such that $\Q(E[n])=\Q(\zeta_n)$, then $n=2$, or $3$. More generally, if $\Q(E[n])/\Q$ is abelian, then  $n=2,3$, or $4$.  Moreover, $\Gal(\Q(E[n])/\Q)$ is isomorphic to one of the following groups:
\begin{center}
% % \begin{table}[h!]
\renewcommand{\arraystretch}{1.3}
\begin{tabular}{|c||c|c|c|c|}
\hline
$n$ & \multicolumn{2}{c|}{$2$}  & $3$ & $4$ \\
\hline
$d_K$ & $-4$ & $-3,-7,-8$  & $-3$ & $-4$ \\
\hline 
\multirow{2}{*}{$\Gal(\Q(E[n])/\Q)$}  & $\{0\}$ & $\Z/2\Z$  & $\Z/2\Z$ & $(\Z/2\Z)^2$  \\
  &  $\Z/2\Z$ &  & $(\Z/2\Z)^2$ & $(\Z/2\Z)^3$  \\
\hline
\end{tabular}
% % \end{table} 
\end{center}
\end{thm}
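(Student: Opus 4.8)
The plan is to combine the divisibility constraints that hold for arbitrary $E/\Q$ with the fact that a CM elliptic curve over $\Q$ has only finitely many $j$-invariants, exploiting throughout the very rigid shape of the mod-$n$ Galois image forced by complex multiplication. Since $\Q(\zeta_n)\subseteq\Q(E[n])$ and $\Q(E[m])\subseteq\Q(E[n])$ for $m\mid n$, it suffices to treat prime powers and composita. By Proposition~\ref{prop-p2} the odd part of an admissible $n$ is squarefree, and by Corollary~\ref{cor-p5} its odd prime divisors lie in $\{3,5\}$, so the odd part of $n$ divides $15$. I would then recall that $j(E)$ is one of the thirteen rational CM $j$-invariants, with field discriminant $d_K\in\{-3,-4,-7,-8,-11,-19,-43,-67,-163\}$ (three of the thirteen CM orders are non-maximal). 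For $n>2$, Lemma~\ref{lem-twists} reduces abelianness of $\Q(E[n])$ to a statement about $j(E)$ when $j\neq 0,1728$, and $\Q(E[2])$, being the splitting field of $\psi_2$, depends only on $j(E)$ always; for $j=0,1728$ I would instead work with the families $y^2=x^3+s$ and $y^2=x^3+sx$, using Lemmas~\ref{lem-j0} and~\ref{lem-j1728} to detect extra rational isogenies.

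The CM input is the following. For $n\ge 2$ the non-real endomorphisms act non-trivially on $E[n]$ and are defined over $K$, so complex conjugation does not centralise the CM action; hence $H:=\rho_{E,n}(\Gal(\overline\Q/K))$ equals $G\cap C$ and has index $2$ in $G:=\rho_{E,n}(\Gal(\overline\Q/\Q))$, where $C\cong(\mathcal O/n\mathcal O)^\times$ is the relevant Cartan subgroup. One deduces $K\subseteq\Q(E[n])$ and $\Q(E[n])=K(E[n])$, abelian over $K$; and since conjugation by $\rho_{E,n}(c)$ acts on $C$ by $\alpha\mapsto\overline\alpha$, the group $G$ is abelian precisely when $H\subseteq R_n:=\{\alpha\in(\mathcal O/n\mathcal O)^\times:\overline\alpha=\alpha\}$. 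As $|R_n|=O(n)$ while $|H|$ grows like $|(\mathcal O/n\mathcal O)^\times|$ up to a bounded factor (the index of $H$ in $C$ divides $|\mathcal O_K^\times|\le 6$ by the main theorem of complex multiplication), comparing the two orders forces $n$ below an explicit bound.

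It then remains to decide which $d_K$ occur for each small $n$, and with which group. For an odd prime $p\mid n$, abelianness of $\Q(E[p])$ forces a diagonal image (Theorem~\ref{thm-diagonal}), hence two independent rational $p$-isogenies and $C_p(E)\ge 3$; inspecting $\rho_{E,p}$ for a CM curve — contained in the normalizer of a split, non-split, or additive Cartan according as $p$ splits, is inert, or ramifies in $\mathcal O$ — and using surjectivity of the cyclotomic character shows that a non-scalar $H$ is then impossible unless $p$ ramifies in $K$. Hence $5\nmid n$ (no CM field of a $\Q$-curve has $5\mid d_K$), so $n\neq 5,10,15,\dots$, and $3\mid n$ forces $d_K=-3$. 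For the $2$-part I would show $\Q(E[8])$ is never abelian for a CM curve: on $y^2=x^3\pm t^2x$ (and, via $\psi_4$, on every $j=1728$ model) Corollary~\ref{cor-E4} with Proposition~\ref{prop-knapp} puts a fourth root such as $\sqrt[4]{2t^2}$ into $\Q(E[8])$, with dihedral closure over $\Q$; so $4\mid n$ is the most that occurs. Now $n\in\{1,2,3,4,6,12\}$; $n=12$ is impossible ($\Q(E[4])$ abelian would force $d_K=-4$, contradicting $d_K=-3$), and $n=6$ is impossible because by Lemma~\ref{lem-j0} the only $j=0$ curves with $\Q(E[3])$ abelian are the quadratic twists of $y^2=x^3+2$, for which $\Q(E[2])$ is the non-abelian splitting field of $x^3+2$ (a parallel check handles the two non-maximal $\Q(\sqrt{-3})$-curves). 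This leaves $n\in\{2,3,4\}$.

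Finally I would read off the groups. For $n=4$, Corollary~\ref{cor-E4} gives $\Q(E[4])=\Q(\sqrt{-1},\sqrt{\alpha-\beta},\sqrt{\alpha-\gamma},\sqrt{\beta-\gamma})$, which is abelian only if $E$ has full rational $2$-torsion (up to quartic twists at $j=1728$), forcing $d_K=-4$; the same computation on $y^2=x^3\pm t^2x$ yields $\Gal(\Q(E[4])/\Q)\cong(\Z/2\Z)^2$ or $(\Z/2\Z)^3$. For $n=3$ only $d_K=-3$ survives, and the split-Cartan family $y^2=x^3+2t^3$ gives $\Z/2\Z$ or $(\Z/2\Z)^2$. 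For $n=2$, computing the splitting fields of the thirteen cubics $\psi_2$ (and of the families at $j=0,1728$) shows $\Q(E[2])/\Q$ is abelian exactly for $d_K\in\{-3,-4,-7,-8\}$, with group $\{0\}$ or $\Z/2\Z$, and exhibiting explicit curves shows each listed group is attained. The step I expect to be the main obstacle is this last, bookkeeping-heavy one: one must check \emph{every} CM curve — all quadratic, quartic, and sextic twists at $j=0,1728$, and the non-maximal-order curves such as $j=54000$, $j=287496$, $j=-12288000$ — and confirm that no Galois group outside the table arises. Corollary~\ref{cor-E4}, the remark that $5$ never ramifies in a CM field of a $\Q$-curve, and the rigidity of the CM image coming from the Cartan-normalizer description are what keep this analysis finite and mechanical.
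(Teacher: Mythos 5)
Your proposal is correct in outline and arrives at the right classification, but the engine driving the CM-specific exclusions is genuinely different from the paper's. The paper's proof is almost entirely ``table-driven'': it notes that there are only $13$ rational CM $j$-invariants, reduces to $n\in\{2^k,3,5\}$ via Corollary \ref{cor-p5} and Proposition \ref{prop-p2} exactly as you do, and then kills $p=5$ (and pins $p=3$ to $j=0$) simply by observing that abelian $\Q(E[p])$ forces two independent rational $p$-isogenies while no CM $j$-invariant admits even one $5$-isogeny; the $2$-power analysis is done by explicit division polynomials (Lemmas \ref{lem-j0} and \ref{lem-j1728}, Corollary \ref{cor-E4}, and explicit degree-$4$ and degree-$8$ factors of $\psi_8$ with Galois group $D_4$). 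You instead exploit the Cartan-normalizer structure of the CM image: abelianness forces $H=\rho_{E,n}(\Gal(\overline\Q/K))$ into the fixed ring of conjugation, which in the split and inert cases is the scalars, and the surjectivity of the cyclotomic character then forces $p\mid d_K$; since $5\nmid d_K$ for every rational CM field this is a cleaner, more conceptual exclusion of $5$ (and your $|R_n|$-versus-$|H|$ count, while redundant given the divisibility constraints, gives an independent a priori bound on $n$). What your route costs is extra care at the primes dividing the conductor of a non-maximal order (e.g.\ $p=3$ for $j=-2^{15}\cdot3\cdot5^3$, where the Cartan description degenerates and the paper instead reads off the isogeny chain $1,3,9,27$, which has no \emph{two independent} $3$-isogenies), and it does not spare you the final case check, which you rightly identify as the bulk of the work. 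Two side-claims need repair, though neither breaks the argument since you treat the exceptional $j$-invariants separately: $\Q(E[2])$ does \emph{not} depend only on $j(E)$ at $j=0$ (compare $y^2=x^3+1$ with $y^2=x^3+2$, a sextic twist with $S_3$ two-division field), and abelian $\Q(E[4])$ does \emph{not} require full rational $2$-torsion --- the paper's family $y^2=x^3+t^2x$ has $E(\Q)[2]\cong\Z/2\Z$ yet $\Q(E[4])=\Q(\zeta_8,\sqrt{t})$, which is exactly how the $(\Z/2\Z)^3$ entry in the $n=4$ column arises.
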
 
\begin{proof}
Since there are only $13$ complex multiplication $j$-invariants over $\Q$, it suffices to check the structure of the division fields for an elliptic curve over $\Q$ with each one of these $j$-invariants, and their twists. By Corollaries \ref{cor-p5}, and Proposition \ref{prop-p2}, it suffices to check the $2^n$, $3$, and $5$ division fields, and their possible compositums. Moreover, if $\Q(E[2^n])$ is abelian, then there is a $2$-isogeny; and if $\Q(E[p])$, with $p=3$, or $5$, is abelian, then $E/\Q$ has two independent $p$-isogenies. Since no $j$-invariant with CM has a $5$-isogeny, $\Q(E[5])$ is never abelian. 

The CM $j$-invariants with a $2$-isogeny are those with $d_K= -3,-4,-7$, or $-8$. For curves $E$ with one of these invariants, we have $\Q(E[2])=\Q(\sqrt{\pm d_K})$ or $\Gal(\Q(E[2])/\Q)\cong S_3$, and in particular, $\Q(E[2])=\Q(\zeta_2)=\Q$ may only occur for $d_K=-4$.

If $\Q(E[4])$ is abelian, then, in particular, $\Q(E[2])$ is abelian. We distinguish several cases depending on the $j$-invariant of the curve:
\begin{enumerate}
\item  If $j=0$, then $E:y^2=x^3+t^3$, by Lemma \ref{lem-j0}. However, in this case, Corollary \ref{cor-E4} implies 
$$\Q(E[4])=\Q\left(i,\sqrt{-3},\sqrt{t},\sqrt{6\sqrt{3}-9}\right)$$
which satisfies $\Gal(\Q(E[4])/\Q)\cong D_4$ or $D_4\times \Z/2\Z$, where $D_4$ is the dihedral group of order $8$. Hence the extension is not abelian.
\item If $j=1728$, then $E:y^2=x^3+sx$, for some $s\in \Q$. Corollary \ref{cor-E4} implies that 
$$\Q(E[4])=\Q\left(\sqrt[4]{-s},\sqrt{-\sqrt{-s}},\sqrt{2\sqrt{-s}}\right)=\Q\left(i,\sqrt{2},\sqrt[4]{-s}\right)=\Q\left(\zeta_8,\sqrt[4]{-s}\right).$$
Then, $G=\Gal(\Q(E[4])/\Q)$ can have $4$ isomorphism types according to the value of $s$:
\begin{enumerate}
\item If $s=\pm t^4$ or $\pm 4t^4$, for some $t\in \Q$, then $\Q(E[4])=\Q(\zeta_8)$, and $G\cong (\Z/2\Z)^2$.
\item If $s=\pm t^2$, for some $t\in \Q$ (but $t\neq 2\cdot \square$), then $\Q(E[4])=\Q(\zeta_8,\sqrt{t})$, and $G\cong (\Z/2\Z)^3$.
\item If $s=\pm 2t^2$, for some $t\in \Q$, then $G\cong D_4$, which is non-abelian.
\item Otherwise, $G\cong D_4\times \Z/2\Z$, which is also non-abelian of order $16$.
\end{enumerate}

\item If $j=2^4\cdot 3^3\cdot 5^3$, then $E$ is a quadratic twist of $E':y^2 = x^3 - 15x + 22$. However, one can verify that $\Gal(\Q(E'[4])/\Q)$ is non-abelian, and therefore $\Gal(\Q(E[4])/\Q)$ is non-abelian also by Lemma \ref{lem-twists}.  Similarly, one checks that the $4$-torsion is not abelian in the cases of $j=2^3\cdot 3^3\cdot 11^3$, and $-3^3\cdot 5^3$, and $3^3\cdot 5^3\cdot 17^3$, and $2^6\cdot 5^3$.

\end{enumerate}

If $\Q(E[8])/\Q$ is abelian, then $\Q(E[4])$ is abelian, and therefore, $E: y^2=x^3\pm t^2 x$, for some $t\in\Q$. Thus, $E$ is a quadratic twist of either $E_+:y^2=x^3+x$ or $E_-:y^2=x^3-x$. By Lemma \ref{lem-twists}, if $\Gal(\Q(E[8])/\Q)$ is abelian, then so is $\Gal(\Q(E_\pm[8])/\Q)$. However, the field $\Q(E_\pm[8])$ contains the number field generated by a root of 
\begin{align*}
f_+ &= x^4 - 4x^3 - 2x^2 - 4x + 1,\\
f_- &= x^8 + 20x^6 - 26x^4 + 20x^2 + 1,
\end{align*}
and $\Gal(f_\pm)\cong D_4$ (note: the polynomials $f_\pm$ are divisors of the $8$th division polynomial of $E_\pm$). Thus, $\Gal(\Q(E_\pm[8])/\Q)$ and $\Gal(\Q(E[8])/\Q)$ are non-abelian. In particular, we have shown $\Q(E[8])/\Q$ is never abelian for a CM curve.

The only CM $j$-invariant with a $3$-isogeny is $j=0$ and Lemma \ref{lem-j0} says that the only curves with $j=0$ and two distinct $3$-isogenies are those of  the form $E: y^2=x^3+16t^3$, for some $t\in \Q^\times$. In this case,  $\Q(E[3])=\Q(\sqrt{-3},\sqrt{t})$. Hence, $\Q(E[3])=\Q(\zeta_3)=\Q(\sqrt{-3})$ occurs for the $\Q$-isomorphism class of $E: y^2=x^3+16$.

Finally, it remains to show that $\Q(E[6])$ and $\Q(E[12])$ cannot be abelian for any CM curve. However, for those CM $j$-invariants with $\Q(E[3])/\Q$ abelian, we have $\Gal(\Q(E[2])/\Q)\cong S_3$, and therefore neither $\Q(E[6])$ nor $\Q(E[12])$ can be abelian over $\Q$. This concludes the proof of the theorem.
\end{proof}

\section{Proof of the main theorem}\label{sec-mainproof}

We begin the proof of our main Theorem \ref{thm-main}, by dealing with the case when $n$ is a power of $2$.
\begin{thm}\label{thm-2torsab}
Let $E/\Q$ be an elliptic curve without CM such that $\Q(E[2^n])/\Q$ is an abelian extension for some $n$. Then, $n\leq 3$. Moreover, $G_n=\Gal(\Q(E[2^n])/\Q)$ is isomorphic to one of the following groups:
\begin{align}
G_1\cong  \begin{cases}
\{0\} \text{, or}\\
\Z/2\Z \text{, or}\\
\Z/3\Z,
\end{cases},\ G_2\cong (\Z/2\Z)^k, \text{ with } 1\leq k\leq 4,\ \text{ or }\  G_3\cong (\Z/2\Z)^j, \text{ with } 4\leq j\leq 6.
\end{align}
In particular, if $\Q(E[4])/\Q$ or $\Q(E[8])/\Q$ is abelian, then it is polyquadratic (i.e.,~contained in $\Q(2^\infty)$, with notation as in Theorem \ref{thm-fujita}).
\end{thm}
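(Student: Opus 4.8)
The plan is to reduce everything to the Rouse--Zureick-Brown classification of $2$-adic Galois images (Theorem \ref{thm-rzb}), supplemented by a few elementary facts about $\GL(2,\Z/2^n\Z)$ and by Corollary \ref{cor-E4}. If $\Q(E[2^n])/\Q$ is abelian then the image $G_n$ of $\rho_{E,2^n}$ is abelian, so the commutator subgroup $[\rho_{E,2^\infty}(\GQ),\rho_{E,2^\infty}(\GQ)]$ lies in the kernel of reduction $\GL(2,\Z_2)\to\GL(2,\Z/2^n\Z)$; running through the $1208$ possibilities for $\rho_{E,2^\infty}(\GQ)$, reducing modulo $2^n$, and discarding the non-abelian ones pins down precisely which groups can occur and shows $n\le 3$. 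The remaining paragraphs organize and supplement this computation.

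For $n=1$: $\GL(2,\Z/2\Z)\cong S_3$, whose abelian subgroups are $\{0\}$, $\Z/2\Z$ and $\Z/3\Z$; all three occur (see the examples in the introduction). Next I claim that $\Q(E[4])/\Q$ abelian forces $\Gal(\Q(E[2])/\Q)\not\cong\Z/3\Z$, which in particular shows $G_2$ and $G_3$ are $2$-groups. Suppose $\Gal(\Q(E[2])/\Q)\cong\Z/3\Z$, so $L:=\Q(E[2])$ is a cyclic cubic field, hence totally real; writing $E:y^2=(x-\alpha)(x-\beta)(x-\gamma)$ with $\alpha,\beta,\gamma\in L$, Corollary \ref{cor-E4} gives $\Q(E[4])=L(\sqrt{-1},\sqrt{\alpha-\beta},\sqrt{\alpha-\gamma},\sqrt{\beta-\gamma})$. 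If this were abelian over $\Q$, the conjugation action of $\Gal(L/\Q)$ on $\Gal\big(L(\sqrt{\alpha-\beta},\sqrt{\beta-\gamma},\sqrt{\gamma-\alpha})/L\big)\le L^\times/(L^\times)^2$ would be trivial; since $\Gal(L/\Q)$ cyclically permutes the classes of $\alpha-\beta,\beta-\gamma,\gamma-\alpha$, the subgroup $V$ they generate must have order at most $2$. If $V$ is trivial, all three differences are squares in $L$, so by Proposition \ref{prop-knapp} $E[2]\subseteq 2E(L)$, whence $E[4]\subseteq E(L)$ --- impossible, as $[\Q(E[4]):\Q]$ is even while $[L:\Q]=3$. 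If $|V|=2$, then, because the product $(\alpha-\beta)(\beta-\gamma)(\gamma-\alpha)=\pm\sqrt{\mathrm{Disc}}$ lies in $\Q^\times$, all three differences lie in a common square class $\bar r$ with $r\in\Q^\times$, so $\alpha-\beta=ru^2$, $\beta-\gamma=rv^2$, $\gamma-\alpha=rw^2$ with $u,v,w\in L^\times$; summing yields $u^2+v^2+w^2=0$ in the totally real field $L$, again impossible. Hence $\Gal(\Q(E[2])/\Q)\in\{\{0\},\Z/2\Z\}$ whenever $\Q(E[4])/\Q$ is abelian.

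It remains to see that $G_2,G_3$ are elementary abelian and to bound their ranks. If $\Gal(\Q(E[2])/\Q)=\{0\}$, Corollary \ref{cor-E4} exhibits $\Q(E[4])$ as $\Q$ with square roots of rationals adjoined, so $G_2\cong(\Z/2\Z)^k$ at once; the case $\Gal(\Q(E[2])/\Q)\cong\Z/2\Z$ and the assertion for $G_3$ follow from the Rouse--Zureick-Brown list (equivalently, one checks that every abelian subgroup of $\GL(2,\Z/2^n\Z)$ for $n=2,3$ with surjective determinant, containing a complex conjugation, and arising as a mod-$2^n$ image, is elementary abelian). For the ranks: $k\ge 1$ since $\Q(\zeta_4)\subseteq\Q(E[4])$; $k\le 4$ since a $2$-Sylow subgroup of $\GL(2,\Z/4\Z)$ has order $32$ but is not elementary abelian (it contains $\left(\begin{smallmatrix}1&1\\0&1\end{smallmatrix}\right)$, of order $4$), so there is no rank-$5$ elementary abelian subgroup. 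For $n=3$, writing $\bar G_8=\rho_{E,8}(\GQ)\cong(\Z/2\Z)^j$, the index $[\GL(2,\Z/8\Z):\bar G_8]$ divides the index of the $2$-adic image, hence divides $64$ or $96$ by Theorem \ref{thm-rzb}; since $|\GL(2,\Z/8\Z)|=2^9\cdot 3$, that index equals $3\cdot 2^{9-j}$, which forces $j\ge 4$, while $j\le 6$ comes from the explicit list. Finally, if $\Q(E[16])/\Q$ were abelian then $\Q(E[16])$ would be obtained from $\Q(E[8])$ by adjoining square roots (Proposition \ref{prop-knapp}), so $\Gal(\Q(E[16])/\Q)$ would be abelian of exponent dividing $4$ while containing $\Gal(\Q(\zeta_{16})/\Q)\cong\Z/2\Z\times\Z/4\Z$; the Rouse--Zureick-Brown classification shows no such $\rho_{E,16}$ image occurs, so $n\le 3$. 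The closing sentence is then immediate: a finite abelian extension of $\Q$ with elementary abelian $2$-group Galois group is generated over $\Q$ by square roots, hence lies in $\Q(2^\infty)$.

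The main obstacle is the genuinely global content --- excluding an abelian image modulo $16$ and obtaining the sharp rank bound at level $8$ --- where the subgroup structure of $\GL(2,\Z/2^n\Z)$ for $n\ge 2$ is too rich for the purely group-theoretic arguments available at odd primes (an abelian Borel subgroup need not be diagonalizable, in contrast to Proposition \ref{prop-borel2}), and the full Rouse--Zureick-Brown classification seems unavoidable there.
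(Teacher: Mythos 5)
Your proposal is correct, and its backbone coincides with the paper's: the published proof of this theorem consists entirely of enumerating (in Magma) the $1208$ Rouse--Zureick-Brown $2$-adic images, checking which reductions modulo $2^n$ (or full preimages, when $n$ exceeds the level) are abelian, and reading off the resulting Galois groups in Tables 3 and 4 --- so every decisive fact (no abelian image at level $16$, elementary abelianness of $G_2$ and $G_3$, the bound $j\le 6$) is obtained there computationally, which is also where your argument ultimately lands for those steps. What you add, and the paper does not, are several hand-proved supplements: the exclusion of $\Gal(\Q(E[2])/\Q)\cong\Z/3\Z$ when $\Q(E[4])/\Q$ is abelian (via Corollary \ref{cor-E4}, equivariance of the Kummer pairing over the totally real cyclic cubic field $L$, rationality of $(\alpha-\beta)(\beta-\gamma)(\gamma-\alpha)$, and the impossibility of $u^2+v^2+w^2=0$ with $u,v,w\in L^\times$ --- this argument is sound), the Sylow-subgroup argument for $k\le 4$, and the index-divides-$96$ argument for $j\ge 4$. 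These make a nontrivial portion of the computation humanly verifiable, which is a genuine gain in transparency; but since the remaining assertions (elementary abelianness of $G_2$ when $E(\Q)[2]\cong\Z/2\Z$, of $G_3$, the bound $j\le 6$, and the level-$16$ exclusion, where your exponent-$4$ observation is, as you note, not by itself contradictory with $\Z/2\Z\times\Z/4\Z$) still rest on the Rouse--Zureick-Brown enumeration, the logical structure of your proof and the paper's is the same.
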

\begin{proof}
Let $n\geq 1$ be fixed and let $E/\Q$ be an elliptic curve such that $\Q(E[2^n])/\Q$ is an abelian extension. Then, the image $\rho_{E,2^n}(\GQ)\subset \GL(2,\Z/2^n\Z)$ is abelian. In other words, the mod $2^n$ reduction of the $2$-adic image $\rho_{E,2^\infty}(\GQ)$ is an abelian subgroup of $\GL(2,\Z/2^n\Z)$. By Theorem \ref{thm-rzb}, there are precisely $1208$ possibilities for the $2$-adic image $\rho_{E,2^\infty}(\GQ)$ up to conjugacy, and these subgroups have been explicitly described in \cite{rouse}. More concretely, the authors of \cite{rouse} describe a set $\mathcal{H}$ of $1208$ pairs $(H,2^N)$ where  $H$ is a subgroup of $\GL(2,\Z/2^N\Z)$, such that $(H,2^N)\in \mathcal{H}$ if and only if there is an elliptic curve $E/\Q$ such that  $\rho_{E,2^\infty}(\GQ)$ is the full inverse image of $H$ under the reduction map modulo $2^N$. 

Thus, to compute all the possible cases when $\Q(E[2^n])/\Q$ is an abelian extension, it suffices to:
\begin{enumerate}
\item[(i)] Find all the pairs $(H,2^N)\in \mathcal{H}$ such that $H \bmod 2^n$ is abelian (within $\GL(2,\Z/2^n\Z)$), for $n\leq N$, and
\item[(ii)]  Find all the pairs $(H,2^N)\in \mathcal{H}$ such that the full inverse image of $H$ in $\GL(2,\Z/2^n\Z)$ under reduction modulo $2^N$ is abelian (as a subgroup of $\GL(2,\Z/2^n\Z)$), for $n> N$.
\end{enumerate}
We have used Magma \cite{magma} in order to find all the groups $H$ as above. We follow the labels as in \cite{rouse}, where they list groups by a number from $1$ to $727$, or by a label of the form $Xks$, where $1\leq k\leq 243$ and $s$ is a letter. Below, we list all the pairs $(\text{<label>},2^N)$ that fall in each category (i) and (ii) as above:
\begin{enumerate}
\item[(i)] (\texttt{X2},2), (\texttt{X6},2), (\texttt{X8},2), (\texttt{X24},4), (\texttt{X25},4), (\texttt{X27},4), (\texttt{X58},4), (\texttt{X60},4), (\texttt{X183},8), (\texttt{X187},8), (\texttt{X189},8), and\\
(\texttt{X8b}, 4), (\texttt{X8d}, 4), (\texttt{X24d}, 4), (\texttt{X24e}, 4), (\texttt{X25h}, 4), (\texttt{X25i}, 4), (\texttt{X25n}, 4), (\texttt{X27f}, 4), (\texttt{X27h}, 4), (\texttt{X58a}, 8), (\texttt{X58b}, 8), (\texttt{X58c}, 8), (\texttt{X58d}, 8), (\texttt{X58e}, 8), (\texttt{X58f}, 8), (\texttt{X58g}, 8), (\texttt{X58h}, 8), (\texttt{X58i}, 4), (\texttt{X58j}, 8), (\texttt{X58k}, 8), (\texttt{X60d}, 4), (\texttt{X183a}, 8), (\texttt{X183b}, 8), (\texttt{X183c}, 8), (\texttt{X183d}, 8), (\texttt{X183e}, 8), (\texttt{X183f}, 8), (\texttt{X183g}, 8), (\texttt{X183h}, 8), (\texttt{X183i}, 8), (\texttt{X183j}, 8), (\texttt{X183k}, 8), (\texttt{X183l}, 8), (\texttt{X187a}, 8), (\texttt{X187b}, 8), (\texttt{X187c}, 8), (\texttt{X187d}, 8), (\texttt{X187e}, 8), (\texttt{X187f}, 8), (\texttt{X187g}, 8), (\texttt{X187h}, 8), (\texttt{X187i}, 8), (\texttt{X187j}, 8), (\texttt{X187k}, 8), (\texttt{X187l}, 8), (\texttt{X189a}, 8), (\texttt{X189b}, 8), (\texttt{X189c}, 8), (\texttt{X189d}, 8), (\texttt{X189e}, 8), (\texttt{X189h}, 8), (\texttt{X189i}, 8), (\texttt{X189j}, 8), (\texttt{X189k}, 8), (\texttt{X189l}, 8).
\item[(ii)] (\texttt{X8},4), (\texttt{X58},8), and (\texttt{X58i}, 8).
\end{enumerate}
Hence, $8$ is the maximum level $2^n$ such that $\Q(E[2^n])/\Q$ is abelian. Moreover, for each $H$ as above, we have calculated (again using Magma) the Galois group $\Gal(\Q(E[2^n])/\Q)$. The results are summarized in Tables 3 and 4 in Sections \ref{table3} and \ref{table4} respectively (we have also included the torsion structure defined over $\Q$ for each type of abelian image) and we conclude the statement of the theorem.
\end{proof}

We are finally ready to put together the proof of the main theorem.

\begin{thm}\label{thm-main1}
Let $E/\Q$ be an elliptic curve, let $n\geq 2$, and suppose that $\Q(E[n])/\Q$ is abelian. Then, $n=2,3,4,5,6$, or $8$. Moreover, $\Gal(\Q(E[n])/\Q)$ is isomorphic to one of the following groups:
\begin{center}
% % \begin{table}[h!]
\renewcommand{\arraystretch}{1.2}
\begin{tabular}{|c||c|c|c|c|c|c|}
\hline
$n$ & $2$ & $3$ & $4$ & $5$ & $6$ & $8$\\
\hline 
\multirow{4}{*}{$\Gal(\Q(E[n])/\Q)$}  & $\{0\}$ & $\Z/2\Z$ & $\Z/2\Z$ & $\Z/4\Z$ & $(\Z/2\Z)^2$ & $(\Z/2\Z)^4$ \\
 & $\Z/2\Z$ & $(\Z/2\Z)^2$ & $(\Z/2\Z)^2$ & $\Z/2\Z\times \Z/4\Z$ & $(\Z/2\Z)^3$ & $(\Z/2\Z)^5$ \\
 & $\Z/3\Z$ &  & $(\Z/2\Z)^3$ & $(\Z/4\Z)^2$ & & $(\Z/2\Z)^6$\\
 &  &  & $(\Z/2\Z)^4$ & & & \\
 \hline
\end{tabular}
% % \end{table} 
\end{center} 
\end{thm}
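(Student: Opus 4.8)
The plan is to reduce the problem to prime-power levels, apply the bounds on such levels already proved, eliminate by hand the handful of composite levels that survive, and then read off the Galois group at each surviving level.

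\textbf{Reduction and prime-power bounds.} A subfield of an abelian extension of $\Q$ is again abelian, and the decomposition $E[n]=\bigoplus_{p\mid n}E[p^{v_p(n)}]$ shows that $\Q(E[n])$ is the compositum of the fields $\Q(E[p^{v_p(n)}])$; hence $\Q(E[n])/\Q$ is abelian if and only if $\Q(E[p^{v_p(n)}])/\Q$ is abelian for every prime $p\mid n$. The case of curves with complex multiplication is precisely Theorem \ref{thm-maincm0}, so I assume henceforth that $E$ has no CM, and will combine that table with the non-CM analysis at the end. For an odd prime $p$, Corollary \ref{cor-p5} forces $p\le 5$ as soon as $\Q(E[p])/\Q$ is abelian, and Proposition \ref{prop-p2} forces $p=2$ as soon as $\Q(E[p^2])/\Q$ is abelian; thus $3$ and $5$ can divide $n$ only to the first power and no prime $\ge 7$ divides $n$. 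For $p=2$, Theorem \ref{thm-2torsab} shows $\Q(E[2^k])/\Q$ is abelian only when $k\le 3$. Therefore $n=2^a3^b5^c$ with $a\le 3$ and $b,c\le 1$; that is, $n\mid 120$.

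\textbf{Eliminating the composite levels.} It remains to discard the divisors of $120$ other than $2,3,4,5,6,8$. If $10\mid n$ then $\Q(E[10])/\Q$ is abelian, contradicting Lemma \ref{lem-10abelian}; this kills $10,20,30,40,60,120$. If $15\mid n$ then $\Q(E[3])/\Q$ and $\Q(E[5])/\Q$ are both abelian, so by Corollary \ref{cor-p5} we have $C_3(E)\ge 3$ and $C_5(E)\ge 3$, whence $C(E)\ge C_3(E)\cdot C_5(E)\ge 9>8$, contradicting Theorem \ref{thm-kenku}; this kills $15$. If $12\mid n$ then $\Q(E[4])/\Q$ and $\Q(E[3])/\Q$ are abelian: by Theorem \ref{thm-2torsab} the field $\Q(E[4])$ is polyquadratic, and by Theorem \ref{thm-diagonal} the image of $\rho_{E,3}$ is diagonalizable, hence lies in a split Cartan subgroup of $\GL(2,\F_3)$ isomorphic to $(\Z/2\Z)^2$, so $\Gal(\Q(E[3])/\Q)$ is elementary $2$-abelian and $\Q(E[3])\subseteq\Q(2^\infty)$ as well; therefore $\Q(E[12])=\Q(E[4])\cdot\Q(E[3])\subseteq\Q(2^\infty)$, forcing $(\Z/12\Z)^2\cong E[12]\subseteq E(\Q(2^\infty))_{\mathrm{tors}}$, which is impossible by Theorem \ref{thm-fujita}. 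This kills $12$ and $24$, leaving exactly $n=2,3,4,5,6,8$ (the level $n=1$ being excluded by the hypothesis $n\ge 2$).

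\textbf{Determining the Galois groups, and the expected obstacle.} For $n=2$ the group $\Gal(\Q(E[2])/\Q)$ is an abelian subgroup of $\GL(2,\F_2)\cong S_3$, hence $\{0\},\Z/2\Z$, or $\Z/3\Z$. For $n=4$ and $n=8$ the possible groups are those tabulated in Theorem \ref{thm-2torsab}, namely $(\Z/2\Z)^k$ with $1\le k\le 4$ and $4\le k\le 6$ respectively (the CM groups of Theorem \ref{thm-maincm0} being already among these). For $n=3$, Theorem \ref{thm-diagonal} puts the image of $\rho_{E,3}$ inside the split Cartan $\spl\cong(\Z/2\Z)^2$ of $\GL(2,\F_3)$; since the determinant is the surjective cyclotomic character and the image contains a diagonal involution of determinant $-1$, the image is either the cyclic group generated by that involution ($\cong\Z/2\Z$) or all of $\spl\cong(\Z/2\Z)^2$. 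For $n=5$, the same argument confines the image to $\spl\cong(\Z/4\Z)^2$ inside $\GL(2,\F_5)$; enumerating the subgroups that contain a diagonal involution of determinant $-1$ and whose determinant surjects onto $(\Z/5\Z)^\times$ leaves exactly $\Z/4\Z$, $\Z/2\Z\times\Z/4\Z$, and $(\Z/4\Z)^2$. For $n=6$, Lemma \ref{lem-6abelian} gives $\Gal(\Q(E[2])/\Q)\cong\Z/2\Z$, so $\Gal(\Q(E[6])/\Q)$ embeds into $\Z/2\Z\times\Gal(\Q(E[3])/\Q)$ and is elementary $2$-abelian of order $2$, $4$, or $8$; an order-$2$ group would force $\Q(E[6])=\Q(E[2])=\Q(E[3])=\Q(\sqrt{-3})$, hence $(\Z/6\Z)^2\subseteq E(\Q(\sqrt{-3}))_{\mathrm{tors}}$, contradicting Theorem \ref{thm-quadgroups} (no group in that list admits $(\Z/6\Z)^2$ as a subgroup), so the group is $(\Z/2\Z)^2$ or $(\Z/2\Z)^3$. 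The union of these lists with the CM table of Theorem \ref{thm-maincm0} is exactly the table in the statement; that every listed group actually occurs (for infinitely many $j$-invariants) is established by the explicit parametrizations of Section \ref{sec-param}. I expect the genuinely delicate point to be the elimination of level $12$, which only becomes tractable once one knows that $\Q(E[3])/\Q$ abelian forces an elementary $2$-abelian Galois group, so that Theorem \ref{thm-fujita} applies; this creates a logical dependence on the $n=3$ analysis that must be arranged carefully. The secondary difficulty is the $n=5$ and $n=6$ bookkeeping, where one must verify that the enumeration of abelian subgroups of the split Cartan—subject to the determinant and complex-conjugation constraints—is genuinely exhaustive, and that the small groups ruled out there cannot in fact be realized; everything else is assembly of results already in hand.
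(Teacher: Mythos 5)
Your proposal is correct, and its skeleton---reduce to prime powers via Corollary \ref{cor-p5}, Proposition \ref{prop-p2} and Theorem \ref{thm-2torsab}, kill $10\mid n$ with Lemma \ref{lem-10abelian} and $15\mid n$ with Kenku's bound, then determine the groups level by level---coincides with the paper's. The one place where you genuinely diverge is the elimination of $n=12$. The paper argues via Lemma \ref{lem-6abelian} that $E$ has exactly one rational $2$-torsion point, inspects the five Rouse--Zureick-Brown images compatible with $\Q(E[4])/\Q$ abelian and $E(\Q)[2]\cong\Z/2\Z$ to extract a rational $4$-isogeny, and then contradicts Kenku's bound $C(E)\leq 8$ using $C_2(E)\geq 3$ and $C_3(E)\geq 3$. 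You instead observe that $\Q(E[4])$ is polyquadratic (this is part of the statement of Theorem \ref{thm-2torsab}) and that $\Q(E[3])$, having elementary $2$-abelian Galois group by Theorem \ref{thm-diagonal}, is likewise contained in $\Q(2^\infty)$ by Kummer theory; hence $E[12]\cong(\Z/12\Z)^2$ would be defined over $\Q(2^\infty)$, which no group in Fujita's list (Theorem \ref{thm-fujita}) accommodates. This is a valid and arguably cleaner route: it avoids any case analysis on the explicit $2$-adic images and does not need Lemma \ref{lem-6abelian} at this stage. The trade-off is that it leans on Theorem \ref{thm-fujita}, a substantial classification result, where the paper only needs Kenku's isogeny bound together with data already extracted in proving Theorem \ref{thm-2torsab}. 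Two smaller remarks: your explicit exclusion of the order-$2$ case for $n=6$ (via Theorem \ref{thm-quadgroups}) fills in a step the paper leaves implicit, and your subgroup enumerations for $n=3$ and $n=5$ match the paper's use of Theorem \ref{thm-diagonal} combined with surjectivity of the determinant.
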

\begin{proof}
The elliptic curves with CM have already been treated in Theorem \ref{thm-maincm}, so we assume here that $E/\Q$ has no CM.  Let $n=2^{e_1}\cdot p_2^{e_2}\cdots p_r^{e_r}$ be a prime factorization of $n$, where each $p_i$ is a distinct prime (here $p_1=2$) with $e_i\geq 1$ for $i\geq 2$, and $e_1\geq 0$. Since $\Q(E[p_i^{e_i}])\subseteq \Q(E[n])$, it follows that $\Q(E[2^{e_1}])/\Q$ is abelian, and therefore $e_1\leq 3$ by Theorem \ref{thm-2torsab}. Similarly, $\Q(E[p_i])/\Q$ is abelian. Hence, Corollary \ref{cor-p5} says that $p_i\leq 5$ and $C_{p_i}(E)\geq 3$ if $p_i>2$. Thus, if both $3$ and $5$ appeared in the factorization of $n$, we would have $C(E)\geq C_3(E)\cdot C_5(E)\geq 9$ which is impossible by Theorem \ref{thm-kenku}. Hence, $n=2^{e_1}p^{e_2}$ with $p=3$ or $5$, and $e_1\leq 3$.

Suppose $e_2\geq 2$ and $p=3$ or $5$. Then, $\Q(E[p^{e_2}])/\Q$ is abelian, but Proposition \ref{prop-p2} says that this is impossible. Hence, we have reached a contradiction and $e_2\leq 1$, i.e.,~$n$ is a divisor of $24$ or $40$. However, if $e_1\geq 1$ and $e_2=1$ with $p=5$, then $\Q(E[10])/\Q$ would be abelian which is impossible by Lemma \ref{lem-10abelian}. Thus, $n$ is a divisor of $5$ or $24$.

Since $\Q(E[12])\subseteq \Q(E[24])$, it suffices to show that $\Q(E[12])$ cannot be abelian. If $\Q(E[12])$ was abelian, then $\Q(E[6])$ is abelian. Lemma \ref{lem-6abelian} says that we must have $\Gal(\Q(E[2])/\Q)\cong \Z/2\Z$ and so $E/\Q$ must have exactly one non-trivial $2$-torsion point defined over $\Q$. Since $\Q(E[4])/\Q$ is also abelian, then the image of $G=\rho_{E,4}(\GQ)$ must be one of the groups given in Table 3 (\S \ref{table3}) such that $E(\Q)[2]\cong \Z/2\Z$. Thus, $G$ is one of
$$
\texttt{X27},\ \texttt{X60},\ \texttt{X27f},\ \texttt{X27h},\  \text{ or }\ \texttt{X60d},$$
in the notation of the subgroups described in \cite{rouse}. However, each of these groups have an invariant cyclic subgroup of order $4$ or, in other words, the associated elliptic curves $E$ have a $4$-isogeny. If so, then $C_2(E)\geq 3$ (in the notation of Theorem \ref{thm-kenku}), and $C_3(E)\geq 3$ because of Theorem \ref{thm-diagonal}. Therefore, $C(E)\geq C_2(E)\cdot C_3(E)=9>8$, which is impossible by Theorem \ref{thm-kenku}. It follows that neither $\Q(E[12])/\Q$ nor $\Q(E[24])/\Q$ can be abelian. This shows that $n=2,3,4,5,6$, or $8$.

The possible structures for $\Gal(\Q(E[n])/\Q)$ when $n$ is a power of $2$ were shown in Theorem \ref{thm-2torsab}. If $G=\Gal(\Q(E[3])/\Q)$ is abelian, then Theorem \ref{thm-diagonal} says that $G$ is diagonalizable in $\GL(2,\Z/3\Z)$. Thus, $G\cong \Z/2\Z$ or $(\Z/2\Z)^2$. If $G=\Gal(\Q(E[5])/\Q)$ is abelian, then Theorem \ref{thm-diagonal} says that $G$ is diagonalizable in $\GL(2,\Z/5\Z)$. Hence, $G\subseteq (\Z/5\Z)^\times \times (\Z/5\Z)^\times\cong (\Z/4\Z)^2$. Since $\Q(\zeta_5)\subseteq \Q(E[5])$, the only possibilities are $G\cong \Z/4\Z$, or $\Z/2\Z\times \Z/4\Z$, or $(\Z/4\Z)^2$. If $G=\Gal(\Q(E[6])/\Q)$ is abelian, then Lemma \ref{lem-6abelian} says that $\Gal(\Q(E[2])/\Q)\cong \Z/2\Z$ (and, therefore, there is a $2$-torsion point defined over $\Q$). If $\Q(E[3])/\Q$ is abelian, then we know that $\Gal(\Q(E[3])/\Q)\cong (\Z/2\Z)^k$ for $k=1$ or $2$. Hence, $G\cong (\Z/2\Z)^2$ or $(\Z/2\Z)^3$. We will show in Section \ref{sec-param} that all these structures do indeed occur. This concludes the proof of the theorem.
\end{proof}

\begin{thm}\label{thm-main2}
Let $E/\Q$ be an elliptic curve without complex multiplication, let $n\geq 2$, and suppose that $\Q(E[n])=\Q(\zeta_n)$, where $\zeta_n$ is a primitive $n$-th root of unity. Then, $n=2,3,4$, or $5$.
\end{thm}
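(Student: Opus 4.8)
The plan is to leverage Theorem~\ref{thm-main1}, which already restricts $n$ to $\{2,3,4,5,6,8\}$, so that it suffices to rule out $n=6$ and $n=8$ under the stronger hypothesis $\Q(E[n])=\Q(\zeta_n)$. For $n=8$: if $\Q(E[8])=\Q(\zeta_8)$, then in particular $\Q(E[4])=\Q(\zeta_4)=\Q(i)$, a quadratic field; but $\Q(E[4])$ contains $\Q(E[2])$, so $\Gal(\Q(E[2])/\Q)$ must be a subgroup of $\Z/2\Z$, forcing $E$ to have a rational $2$-torsion point, hence $C_2(E)\geq 2$. A cleaner route: $\Q(E[4])=\Q(i)$ has degree $2$, so $G_2=\Gal(\Q(E[4])/\Q)$ embeds into $\GL(2,\Z/4\Z)$ as a group of order dividing $2$; inspecting Table~3 (or directly the abelian $2$-adic images of Theorem~\ref{thm-2torsab}) shows no such image has trivial or order-$2$ image equal to $\Q(i)$ exactly while also having $\Q(E[8])=\Q(\zeta_8)$ — indeed $[\Q(\zeta_8):\Q]=4$ forces $|G_3|=4$, and every abelian $2$-adic image with $\Q(E[8])$ abelian and of degree $4$ over $\Q$ strictly contains $\Q(\zeta_8)$ (one can cite the explicit fields computed in the proof of Theorem~\ref{thm-2torsab}, e.g.\ the examples $\Q(E[8])=\Q(\zeta_8,\sqrt3,\sqrt7)$). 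So $n=8$ is eliminated by comparing degrees: $|{\Gal(\Q(E[8])/\Q)}|$ is never exactly $4$ when the extension is abelian.

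For $n=6$: suppose $\Q(E[6])=\Q(\zeta_6)=\Q(\sqrt{-3})$. Then $\Q(E[2])\subseteq\Q(\sqrt{-3})$, so $\Gal(\Q(E[2])/\Q)$ is trivial or $\Z/2\Z$; by Lemma~\ref{lem-6abelian} (applicable since $\Q(E[6])$ is abelian) we have $\Gal(\Q(E[2])/\Q)\cong\Z/2\Z$, and moreover this quadratic subfield must equal $\Q(\sqrt{-3})$ since it is a subfield of $\Q(\sqrt{-3})$ of degree $2$. Also $\Q(E[3])\subseteq\Q(\sqrt{-3})$, and since $\Q(\zeta_3)=\Q(\sqrt{-3})\subseteq\Q(E[3])$ always, we get $\Q(E[3])=\Q(\sqrt{-3})=\Q(\zeta_3)$. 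Thus $E$ would simultaneously have $\Q(E[2])=\Q(E[3])=\Q(\sqrt{-3})$, so $\Q(E[6])=\Q(\sqrt{-3})$ has degree $2$, meaning $\Gal(\Q(E[6])/\Q)\cong\Z/2\Z$ — but Theorem~\ref{thm-main1} shows the only abelian possibilities for $\Gal(\Q(E[6])/\Q)$ are $(\Z/2\Z)^2$ and $(\Z/2\Z)^3$, neither of which has order $2$. This contradiction eliminates $n=6$.

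The main obstacle is making the $n=8$ elimination airtight: Theorem~\ref{thm-main1} gives the group-theoretic structure $(\Z/2\Z)^4, (\Z/2\Z)^5, (\Z/2\Z)^6$ for $\Gal(\Q(E[8])/\Q)$, all of order $\geq 16 > 4 = [\Q(\zeta_8):\Q]$, so in fact the degree argument is immediate and decisive — the list in Theorem~\ref{thm-main1} already does the work, and no appeal to the explicit field computations in Theorem~\ref{thm-2torsab} is strictly needed. Concretely, the proof reads: by Theorem~\ref{thm-main1}, $n\in\{2,3,4,5,6,8\}$ and $\Gal(\Q(E[n])/\Q)$ lies in the stated list; for $n=6$, every listed group has order $>2=[\Q(\zeta_6):\Q]$, and for $n=8$, every listed group has order $>4=[\Q(\zeta_8):\Q]$; hence $\Q(E[n])=\Q(\zeta_n)$ is impossible for $n=6,8$, leaving $n=2,3,4,5$. (The remaining content of the full classification — exhibiting curves realizing $\Q(E[n])=\Q(\zeta_n)$ for each of $n=2,3,4,5$ — is handled by the explicit examples in the introduction and the parametrizations of Section~\ref{sec-param}.)
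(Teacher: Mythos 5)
Your elimination of $n=8$ coincides with the paper's: since $\Q(\zeta_8)\subseteq \Q(E[8])$ always holds and Theorem \ref{thm-main1} forces $\Gal(\Q(E[8])/\Q)\cong(\Z/2\Z)^k$ with $k\geq 4$ whenever that extension is abelian, the order comparison $16>4=[\Q(\zeta_8):\Q]$ is decisive. That part is fine.

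The $n=6$ part has a genuine problem: it is circular in substance. Because $\Q(\zeta_6)=\Q(\sqrt{-3})\subseteq\Q(E[6])$ and $[\Q(\zeta_6):\Q]=2$, the statement ``$\Gal(\Q(E[6])/\Q)\not\cong\Z/2\Z$'' is \emph{literally equivalent} to the statement ``$\Q(E[6])\neq\Q(\zeta_6)$'' that you are trying to prove; all the intermediate steps in your paragraph (identifying $\Q(E[2])$ and $\Q(E[3])$ with $\Q(\sqrt{-3})$) merely unwind the hypothesis before you appeal to the table in Theorem \ref{thm-main1} for exactly this excluded case. And the paper's proof of Theorem \ref{thm-main1} does not independently establish that exclusion: from Lemma \ref{lem-6abelian} and the $n=3$ analysis one only gets $\Gal(\Q(E[2])/\Q)\cong\Z/2\Z$ and $\Gal(\Q(E[3])/\Q)\cong(\Z/2\Z)^k$ with $k\in\{1,2\}$, and the compositum $\Q(E[2])\Q(E[3])$ could a priori have Galois group $\Z/2\Z$ precisely when $\Q(E[2])=\Q(E[3])=\Q(\sqrt{-3})$ --- the ``Hence $G\cong(\Z/2\Z)^2$ or $(\Z/2\Z)^3$'' step silently assumes this does not happen. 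The missing ingredient, and the paper's actual argument, is Najman's classification (Theorem \ref{thm-najman2}, or Kenku--Momose, Theorem \ref{thm-quadgroups}): if $\Q(E[6])=\Q(\sqrt{-3})$ then $E(\Q(\sqrt{-3}))_\text{tors}$ contains $\Z/6\Z\oplus\Z/6\Z$, which does not occur for a rational elliptic curve over a quadratic field. You should replace your degree-counting contradiction for $n=6$ with this torsion-over-quadratic-fields argument; as written, your proof supplies no independent content for that case.
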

\begin{proof}
By Theorem \ref{thm-main1} we know that $n=2,3,4,5,6$, or $8$. Hence, it suffices to show that $\Q(E[n])=\Q(\zeta_n)$ is not possible when $n=6$ and $n=8$. 

Suppose first that $\Q(E[6])=\Q(\zeta_{6})=\Q(\zeta_3)=\Q(\sqrt{-3})$. Then, $E/\Q$ is a curve defined over $\Q$ such that $E(\Q(\sqrt{-3}))_\text{tors}$ contains a subgroup of the form $\Z/6\Z\times \Z/6\Z$. But this is impossible by Theorem \ref{thm-najman2}.

Now, if $\Q(E[8])=\Q(\zeta_8)$, then $\Gal(\Q(E[8])/\Q)=\Gal(\Q(i,\sqrt{2})/\Q)\cong (\Z/2\Z)^2$, but we have seen in Theorem \ref{thm-main1} that $\Gal(\Q(E[8])/\Q)\cong (\Z/2\Z)^k$ with $k=4,5,$ or $6$. Hence, $\Q(E[8])=\Q(\zeta_8)$ is impossible, and this concludes the proof of the theorem.
\end{proof}
\section{Parametrizations}\label{sec-param}

In this section we describe the parametrizations of elliptic curves $E$ over $\Q$ with cyclotomic or abelian division fields $\Q(E[n])$. 

\subsection{$\Q(E[2])$}\label{sec-Q2}

If we write $E: y^2=f(x)=x^3+ax+b$ with discriminant $\Delta_E\neq 0$, then we have $\Gal(\Q(E[2])/\Q)=\Gal(f)$, and there are three possibilities for $G=\Gal(\Q(E[2])/\Q)$ abelian: $G$ is trivial, $\Z/2\Z$, or $\Z/3\Z$.
\begin{enumerate}
\item $G$ is trivial if and only if $f(x)$ is not irreducible over $\Q$ and $\Delta_E$ is a perfect square. Equivalently, these are the elliptic curves of the form
$$E_{c,d}: y^2=x(x-c)(x-d),$$
with non-zero and distinct rational numbers $c,d$.
\item $G\cong \Z/2\Z$ if and only if $f(x)$ is not irreducible over $\Q$ and $\Delta_E$ is not a perfect square.  Equivalently, these are the elliptic curves of the form
$$E_{e,f}: y^2=x(x^2+ex+f), \text{ with } e^2-4f\neq \square.$$
\item $G\cong \Z/3\Z$ if and only if $f(x)$ is irreducible over $\Q$ and $\Delta_E$ is a perfect square.  Equivalently, these are the elliptic curves of the form
$$E_{a,b}: y^2=x^3+ax+b, \text{ with } -(4a^3+27b^2) = \square.$$
For instance, a one parameter family of such curves is given by the so-called simplest cubic fields \cite{shanks}:
$$E_t: y^2=x^3-tx^2-(t+3)x-1,$$
with $\Delta_t = (t^2+3t+9)^2$.
\end{enumerate}

\subsection{$\Q(E[3])$}\label{sec-Q3} If $G=\Gal(\Q(E[3])/\Q)$ is abelian, then Theorem \ref{thm-diagonal} says that $G$ is diagonalizable in $\GL(2,\Z/3\Z)$. Thus, $G\cong \Z/2\Z$ or $(\Z/2\Z)^2$. 
\begin{enumerate}
\item If $G\cong \Z/2\Z$, then the existence of the Weil pairing forces $\Q(E[3])=\Q(\sqrt{-3})$. The curves $E$ with full $3$-torsion over $\Q(\sqrt{-3})$ are parametrized by $X(3)$, given by the Hesse cubic (see \cite{rubin}), which has a Weierstrass model:
$$E_{X(3)}(t):y^2=x^3-27t(t^3+8)x+54(t^6-20t^3-8),$$
with $j_{X(3)}(t)=j(E_{X(3)}(t))=\frac{27t^3(t^3+8)^3}{(t^3-1)^3}$. We note here that the curves in this family have a $3$-torsion point defined over $\Q$. A different model can be found in \cite{paladino}.
\item If $G\cong (\Z/2\Z)^2$, and $G$ is diagonalizable, then $G$ is a full split Cartan subgroup of $\GL(2,\Z/3\Z)$. The curves with split Cartan image are parametrized by the modular curve $X_s(3)$. Suppose $E/\Q$ has $G$ congruent to a full split Cartan subgroup of $\GL(2,\Z/3\Z)$. Then,
$$\rho_{E,3}(\GQ)\cong \left\{\left(\begin{array}{cc}
\phi_1(\sigma) & 0\\
0 & \phi_2(\sigma)
\end{array}\right): \sigma\in\GQ \right\},$$
and since $\phi_1\cdot \phi_2=\det(\rho_{E,3})= \chi_3$ is the $3$rd cyclotomic character, we must have $\phi_1=\chi_3\psi$ and $\phi_2=\psi^{-1}=\psi$, where $\psi\colon \GQ\to(\Z/3\Z)^\times$ is a quadratic character. Let $E^\psi/\Q$ be the quadratic twist of $E/\Q$ by $\psi$. Then,
$$\rho_{E^\psi,3}(\GQ)\cong \left\{\left(\begin{array}{cc}
\chi_3(\sigma) & 0\\
0 & 1 
\end{array}\right): \sigma\in\GQ \right\},$$
and, therefore, $E^\psi$ is a curve over $\Q$ with full $3$-torsion over $\Q(\sqrt{-3})$. Hence, $j(E^\psi)$ is one of the $j$-invariants that appear associated to a non-cuspidal rational point on $X(3)$. Hence, $j(E)=j_{X(3)}(t)=\frac{27t^3(t^3+8)^3}{(t^3-1)^3}$ for some $t\in\Q$. Conversely, if $j(E)=j(E_{X(3)}(t))$, then $E$ is a quadratic twist of a curve $E_{X(3)}(t)/\Q$ as above, with full $3$-torsion over $\Q(\sqrt{-3})$, and the image of $\rho_{E,3}$ is contained in the split Cartan. If the quadratic twist is by a character $\psi$ such that the quadratic character $\psi\chi_3$ is non-trivial, then $G=(\Z/2\Z)^2$, as desired. In particular, we have shown that $j_{X_s(3)}(t)=j_{X(3)}(t)$.
\end{enumerate}

\subsection{$\Q(E[4])$}\label{sec-Q4} If $G=\Gal(\Q(E[4])/\Q)$ is abelian, then Theorem \ref{thm-2torsab} shows that $G\cong (\Z/2\Z)^k$, for $k=1,2,3$, or $4$. Hence, $\Gal(\Q(E[4])/\Q)\cong\rho_{E,4}(\GQ)$ appears in the list of possible abelian images in the proof of Theorem \ref{thm-2torsab} (which in turn are the images classified in \cite{rouse}), as an abelian group in level $4$ or $8$, or as an abelian group of level $2$ that stays abelian under full inverse image in level $4$. Here we show examples of infinite families (most of them as they appear in \cite{rouse}) for each possible combination of Galois group $G$, and rational $4$-torsion $E(\Q)[4]$.
\begin{enumerate}
\item $G\cong \Z/2\Z$, and $E(\Q)[4]\cong \Z/4\Z$ (in this case, $\Q(E[4])=\Q(i)=\Q(\zeta_4)$):
$$\texttt{X60d}: y^2 = x^3 + (-432t^8 + 1512t^4 - 27)x + (3456t^{12}
    + 28512t^8 - 7128t^4 - 54).$$
    
    \item $G\cong \Z/2\Z$, and $E(\Q)[4]\cong \Z/2\Z\oplus \Z/4\Z$ (in this case, $\Q(E[4])=\Q(i)=\Q(\zeta_4)$):
    $$\texttt{X58i}: y^2 = x^3 + (-27t^8 - 378t^4 - 27)x + (54t^{12} -
        1782t^8 - 1782t^4 + 54).$$
\item $G\cong (\Z/2\Z)^2$, and $E(\Q)[4]\cong \Z/2\Z$:
$$\texttt{X27f}:  y^2 = x^3 + (-432t^6 + 1512t^4 - 27t^2)x +
    (3456t^9 + 28512t^7 - 7128t^5 - 54t^3).      $$
    Alternatively, the quadratic twists of $\texttt{X60d}$ are also in this category (which correspond to image $\texttt{X60}$ in \cite{rouse}).
\item $G\cong (\Z/2\Z)^2$, and $E(\Q)[4]\cong \Z/4\Z$:
$$\texttt{X27h}: y^2 = x^3 + (-432t^4 + 1512t^2 - 27)x + (3456t^6 +
    28512t^4 - 7128t^2 - 54).$$
\item $G\cong (\Z/2\Z)^2$, and $E(\Q)[4]\cong \Z/2\Z\oplus \Z/2\Z$: there are $5$ families of curves with these properties. One family is given by the quadratic twists of $\texttt{X58i}$ given above, which in fact constitute the points on the modular curve $X_s(4)$. Other families with these properties are $\texttt{X24d}$, $\texttt{X24e}$, $\texttt{X25h}$, and $\texttt{X25i}$. For instance, here is $\texttt{X24d}$:
 $$\texttt{X24d}:y^2=x^3 + (-27t^8 - 81t^6 - 108t^4 - 81t^2 -
     27)x + (54t^{12} + 243t^{10} + 324t^8 - 324t^4 - 243t^2 - 54).$$
\item $G\cong (\Z/2\Z)^2$, and $E(\Q)[4]\cong \Z/2\Z\oplus \Z/4\Z$:
 $$\texttt{X25n}: y^2 = x^3 + (-27t^4 + 27t^2 - 27)x + (54t^6 -
    81t^4 - 81t^2 + 54).$$
\item $G\cong (\Z/2\Z)^3$, and $E(\Q)[4]\cong \Z/2\Z$: the curves with this property are quadratic twists of $\texttt{X27f}$ above.
\item $G\cong (\Z/2\Z)^3$, and $E(\Q)[4]\cong \Z/2\Z\oplus \Z/2\Z$: there are four families of curves with these properties. Two of the families are the quadratic twists of $\texttt{X24d}$ and $\texttt{X25h}$, respectively. The other two families are $\texttt{X8b}$ and $\texttt{X8d}$. For instance, here is $\texttt{X8d}$:
$$\texttt{X8d}: y^2  = x^3 + (-108t^2 - 324)x + (432t^3 - 3888t).$$
\item $G\cong (\Z/2\Z)^4$, and $E(\Q)[4]\cong \Z/2\Z\oplus \Z/2\Z$: this is the family of quadratic twists of the curves $E$ in the family $\texttt{X8d}$, such that the quadratic field defined by the twist is not contained in the $(\Z/2\Z)^3$-extension defined by $\Q(E[4])/\Q$.
\end{enumerate}

\subsection{$\Q(E[5])$}\label{sec-Q5} If $G=\Gal(\Q(E[5])/\Q)$ is abelian, then Theorem \ref{thm-diagonal} says that $G$ is diagonalizable in $\GL(2,\Z/5\Z)$. Hence, $G\subseteq (\Z/5\Z)^\times \times (\Z/5\Z)^\times\cong (\Z/4\Z)^2$. Since $\Q(\zeta_5)\subseteq \Q(E[5])$, the only possibilities are $G\cong \Z/4\Z$, or $\Z/2\Z\times \Z/4\Z$, or $(\Z/4\Z)^2$.
\begin{enumerate}
\item If $G\cong \Z/4\Z$, then the existence of the Weil pairing forces $\Q(E[5])=\Q(\zeta_5)$. The curves $E$ with full $5$-torsion over $\Q(\zeta_5)$ are parametrized by $X(5)$, which has a Weierstrass model (see \cite{rubin}, who acknowledge \cite{klein}):
\small
$$E_{X(5)}(t):y^2=x^3-\frac{t^{20}-228t^{15}+494t^{10}+228t^5+1}{48}x+\frac{t^{30}+522t^{25}-10005t^{20}-10005t^{10}-522t^5+1}{864},$$
\normalsize with $j_{X(5)}(t)=j(E_{X(5)}(t))=\frac{-(t^{20}-228t^{15}+494t^{10}+228t^5+1)^3}{t^5(t^{10}+11t^5-1)^5}$. 

\item If $G\cong (\Z/2\Z)\times (\Z/4\Z)$, then a similar argument to the case of $G\cong (\Z/2\Z)^2$ for $\Q(E[3])$ shows that $E$ must be a quadratic twist of one of the curves $E_{X(5)}(t)$ given in the previous case (in fact, any non-trivial twist, except for the quadratic twist $E^5$ of $E$ by $d=5$). Hence, $j(E)=j(E_{X(5)}(t))$ for some $t\in\Q$.
\item If $G\cong (\Z/4\Z)^2$, and $G$ is diagonalizable, then $G$ is a full split Cartan of $\GL(2,\Z/5\Z)$. The curves with split Cartan image are parametrized by the modular curve $X_s(5)$. A model can be found in \cite{zywina1}, with $j$-line $j\colon X_s(5)\to \PP^1(\Q)$ given by 
$$j_{X_s(5)}(t)=\frac{(t^2+5t+5)^3(t^4+5t^2+25)^3(t^4+5t^3+20t^2+25t+25)^3}{(t(t^4+5t^3+15t^2+25t+25))^5}.$$
\end{enumerate} 

\subsection{$\Q(E[6])$}\label{sec-Q6} If $G=\Gal(\Q(E[6])/\Q)$ is abelian, then Lemma \ref{lem-6abelian} says that $\Gal(\Q(E[2])/\Q)\cong \Z/2\Z$ (and, therefore, there is a $2$-torsion point defined over $\Q$). If $\Q(E[3])/\Q$ is abelian, then we know that $\Gal(\Q(E[3])/\Q)\cong (\Z/2\Z)^k$ for $k=1$ or $2$. Hence, $G\cong (\Z/2\Z)^2$ or $(\Z/2\Z)^3$. Moreover, if $k=1$, then $E$ has a $3$-torsion point over $\Q$, and an independent rational $3$-isogeny, and if $k=2$, then $E$ has two independent rational $3$-isogenies (and $E$ is a quadratic twist of a curve $E'$ with $\Gal(\Q(E'[6])/\Q)\cong (\Z/2\Z)^2$) 
\begin{enumerate}
\item If $G=\Gal(\Q(E[6])/\Q)\cong (\Z/2\Z)^2$, then $E/\Q$ is characterized by having two distinct rational $3$-isogenies and a rational point of order $6$. In order to build the curves in this family, we first note that $E$ is isogenous to a curve with $E'$ with a $\Q$-isogeny of degree $18$. Hence, $j(E')=j_{X_0(18)}(t)$, where the $j$-line for $X_0(18)$ is given by (see \cite{ishii})
$$j_{X_0(18)}(t) = \frac{(t^3-2)^3(t^9-6t^6-12t^3-8)^3}{t^9(t^3-8)(t^3+1)^2}.$$
Now, using the formulas of V\'elu \cite{velu}, we find the formula for the $j$-invariant of the curve $E$ such that $E'\to E$ is an isogeny of degree $6$. The $j$-invariant of $E$, in terms of the parameter $t$, is given by
$$j(E_t)=\frac{(t^3 - 2)^3(t^3 + 6t - 2)^3(t^6 - 6t^4 - 4t^3 + 36t^2 + 12t + 4)^3}{(t - 2)^3t^3(t + 1)^6(t^2 - t + 1)^6(t^2 + 2t + 4)^3},$$
and a family of elliptic curves $E_t$ with $j$-invariant equal to $j(E_t)$ is given by
\begin{align*} E_t: y^2 & = x^3 + (-27t^{12} + 216t^9 - 6480t^6 + 12528t^3
    - 432)x \\
    &+ (54t^{18} - 648t^{15} - 25920t^{12} + 166320t^9 - 651888t^6 +
    222912t^3 + 3456).
    \end{align*}
Finally, one can verify that $E_t$ has a $\Q(t)$-rational point of order $6$, and two rational $3$-isogenies.
\item If $G=\Gal(\Q(E[6])/\Q)\cong (\Z/2\Z)^3$, then $\Gal(\Q(E[3])/\Q)\cong (\Z/2\Z)^2$ and by our results in Section \ref{sec-Q3} for $n=3$, we have that  $E/\Q$ is a quadratic twist of one of the elliptic curves $E_t$ with $6$-torsion defined over a biquadratic field $F$.  Conversely, if $E_t$ is as above, $F=\Q(E[6])$ is a biquadratic field, and if $K=\Q(\sqrt{d})$ is a quadratic field not contained in $F$, then $E_t^d$, the quadratic twist of $E_t$ by $d$, satisfies $\Gal(\Q(E_t^d[6])/\Q)\cong (\Z/2\Z)^3$.
\end{enumerate}

\subsection{$\Q(E[8])$}\label{sec-Q8} If $G=\Gal(\Q(E[8])/\Q)$ is abelian, then Theorem \ref{thm-2torsab} shows that $G\cong (\Z/2\Z)^k$, for $k=4,5$, or $6$. Hence, $\Gal(\Q(E[8])/\Q)\cong\rho_{E,8}(\GQ)$ appears in the list of possible abelian images in the proof of Theorem \ref{thm-2torsab} (which in turn are the images classified in \cite{rouse}), as an abelian group in level $8$, or as an abelian group of level $4$ that stays abelian under full inverse image in level $8$. Here we show examples of infinite families (most of them as they appear in \cite{rouse}) for each possible combination of Galois group $G$, and rational $8$-torsion $E(\Q)[8]$.
\begin{enumerate}
\item $G\cong (\Z/2\Z)^4$, and $E(\Q)[8]\cong \Z/2\Z\oplus\Z/2\Z$: there are $28$ families of curves with these properties (see Table 4 in section \ref{table4} for the full list and concrete examples). For instance,
\begin{align*} \texttt{X183a}: y^2 = x^3 & + (-108t^{24} - 216t^{20} - 1620t^{16} -
    3024t^{12} - 1620t^8 - 216t^4 - 108)x\\
    & + (432t^{36} + 1296t^{32} - 12960t^{28}
    - 42336t^{24} - 57024t^{20} - 57024t^{16}\\
    & - 42336t^{12} - 12960t^8 + 1296t^4 +
    432).
    \end{align*}
\item $G\cong (\Z/2\Z)^4$, and $E(\Q)[8]\cong \Z/2\Z\oplus\Z/4\Z$: there are $6$ families of curves with these properties (see Table 4 in section \ref{table4} for the full list and concrete examples). For instance,
\begin{align*} \texttt{X183d}: y^2 = x^3 + (-27t^{16} - 378t^8 - 27)x + (54t^{24} -
    1782t^{16} - 1782t^8 + 54).
\end{align*} 
\item $G\cong (\Z/2\Z)^5$, and $E(\Q)[8]\cong \Z/2\Z\oplus\Z/2\Z$: there are $13$ families of curves with these properties (see Table 4 in section \ref{table4} for the full list and concrete examples). For instance,
\begin{align*} \texttt{X58b}: y^2 = x^3 + (-108t^{10} - 1512t^6 - 108t^2)x +
    (432t^{15} - 14256t^{11} - 14256t^7 + 432t^3).
\end{align*} 
\item $G\cong (\Z/2\Z)^5$, and $E(\Q)[8]\cong \Z/2\Z\oplus\Z/4\Z$: these are the curves in the family $\texttt{X58i}$ that was already given in the section for abelian $\Q(E[4])/\Q$ extensions.
\item $G\cong (\Z/2\Z)^6$, and $E(\Q)[8]\cong \Z/2\Z\oplus\Z/2\Z$: the curves with these properties belong to the family of quadratic twists of the curves $E$ in the family $\texttt{X58i}$, such that the quadratic field defined by the twist is not contained in the $(\Z/2\Z)^5$-extension defined by $\Q(E[8])/\Q$.
\end{enumerate}

\section{Appendix: Tables}\label{sec-appendix}

In this section we include four tables that summarize our findings and provide concrete examples of elliptic curves (or families of elliptic curves) with each possible isomorphism type of abelian division field, and torsion structure over $\Q$.

\subsection{Table 1: CM curves} \label{table1} In this table, for each rational $j$-invariant with complex multiplication (and for each $\Q$-isomorphism class), we give 
\begin{enumerate}[(a)]
\item the degrees of the $\Q$-isogenies (with cyclic kernel) for elliptic curves in this $\Q$-isomorphism class, 
\item the Galois structure of the $2$-torsion division field $\Q(E[2])$, 
\item the largest division field $\Q(E[n])$ that is abelian over $\Q$, and
\item the largest division field $\Q(E[n])$ that is isomorphic to $\Q(\zeta_n)$, if any. 
\end{enumerate} 

\vskip 0.1in

\begin{center} 
%\begin{table}[h!]
\renewcommand{\arraystretch}{1.2}
\begin{tabular}{|c|c|c|c|c|c|c|}
\hline
$d_K$ & \multicolumn{2}{c|}{$j$} & Isogenies & $\Q(E[2])/\Q$ & $\Q(E[n])$ abelian & $\Q(\zeta_n)$\\
\hline
\hline
\multirow{5}{*}{$-3$}  & \multirow{3}{*}{$0$} & $y^2=x^3+t^3$ & $1,2,3,6$ & $\Q(\sqrt{-3})$ & $\Q(E[2])=\Q(\sqrt{-3})$ & none\\
& &$y^2=x^3+16t^3$ & $1,3,3,9$ & $S_3$ & $\!\Q(E[3])=\Q(\sqrt{-3},\sqrt{t})\!\!$ & $\!\Q(E[3])\!\!$\\
  & & $\!y^2=x^3+s,\,s\neq t^3,16t^3\!\!$ & $1,3$ & $S_3$ & none & none\\
  \cline{2-7}
  & \multicolumn{2}{c|}{$2^4\cdot 3^3\cdot 5^3$} & $1,2,3,6$ & $\Q(\sqrt{3})$ & $\Q(E[2])=\Q(\sqrt{3})$ & none\\
& \multicolumn{2}{c|}{$-2^{15}\cdot 3\cdot 5^3$} & $1,3,9,27$ & $S_3$ & none & none\\
\hline
\multirow{4}{*}{$-4$} & \multirow{3}{*}{$2^6\cdot 3^3$} & $y^2=x^3+t^2x$ & $1,2,4,4$ & $\Q(i)$ & $\Q(E[4])=\Q(\zeta_8,\sqrt{t})$ & none\\
& & $y^2=x^3-t^2x$ & $1,2,2,2$ & $\Q$ & $\Q(E[4])=\Q(\zeta_8,\sqrt{t})$ & $\Q$\\
& &  $y^2=x^3+sx$, $s\neq \pm t^2$ & $1,2$ & $\Q(\sqrt{-s})$ & $\Q(E[2])=\Q(\sqrt{-s})$ & none\\
\cline{2-7}
  & \multicolumn{2}{c|}{$2^3\cdot 3^3\cdot 11^3$} & $1,2,4,4$ & $\Q(\sqrt{2})$ & $\Q(E[2])=\Q(\sqrt{2})$ & none\\
\hline
\multirow{2}{*}{$-7$} & \multicolumn{2}{c|}{$-3^3\cdot 5^3$} & $1,2,7,14$ & $\Q(\sqrt{-7})$ & $\Q(E[2])=\Q(\sqrt{-7})$ & none\\
  & \multicolumn{2}{c|}{$3^3\cdot 5^3\cdot 17^3$} & $1,2,7,14$ & $\Q(\sqrt{7})$ & $\Q(E[2])=\Q(\sqrt{7})$ & none\\
\hline
$-8$ & \multicolumn{2}{c|}{$2^6\cdot 5^3$} & $1,2$ & $\Q(\sqrt{2})$ & $\Q(E[2])=\Q(\sqrt{2})$ & none\\
\hline
$-11$ & \multicolumn{2}{c|}{$-2^{15}$} & $1,11$ & $S_3$ & none & none\\
\hline
$-19$ & \multicolumn{2}{c|}{$-2^{15}\cdot 3^3$} & $1,19$ & $S_3$ & none & none\\
\hline
$-43$ & \multicolumn{2}{c|}{$-2^{18}\cdot 3^3\cdot 5^3$} & $1,43$ & $S_3$ & none & none\\
\hline
$-67$ & \multicolumn{2}{c|}{$-2^{15}\cdot 3^3\cdot 5^3\cdot 11^3$} & $1,67$ & $S_3$ & none & none\\
\hline
$-163$ & \multicolumn{2}{c|}{$-2^{18}\cdot 3^3\cdot 5^3\cdot 23^3\cdot 29^3$} & $1,163$ & $S_3$ & none & none\\
\hline
\end{tabular}

%\end{table}
\end{center} 
\newpage

\subsection{Table 2: $n=3,5$, or $6$} \label{table2} In this table, for each $n=3$, $5$, or $6$, and for each of the possible isomorphism type of $\Gal(\Q(E[n])/\Q)$, we give 
\begin{enumerate}[(a)]
\item the possible torsion structures of $E(\Q)[n]$, 
\item an example of an elliptic curve $E/\Q$ with each Galois group and torsion structure, using the notation of Cremona's database \cite{cremonaweb}, and 
\item the actual division field $\Q(E[n])$ for the concrete example curve given in (b). We remark here that the actual field $\Q(E[n])$ depends on the choice of $E/\Q$, when $\Q(\zeta_n)\subsetneq \Q(E[n])$. 
\end{enumerate}

\vskip 0.1in

\begin{center}
\renewcommand{\arraystretch}{1.3}
\begin{tabular}{|c|c|c|c|c|}
\hline
$n$ & $\Gal(\Q(E[n])/\Q)$ & $E(\Q)[n]$ & Cremona & $\Q(E[n])$\\
\hline\hline
\multirow{2}{*}{$3$} &$\Z/2\Z$  & $\Z/3\Z$ & \texttt{19a1} & $\Q(\zeta_3)$\\
\cline{2-5}
                              & \multirow{1}{*}{$(\Z/2\Z)^2$}& $\{\mathcal O\}$ &    \texttt{175b2} & $\Q(\zeta_3,\sqrt{5})$\\
                              \hline\hline
\multirow{5}{*}{$5$} &\multirow{2}{*}{$\Z/4\Z$}  &  $\{\mathcal O\}$&   \texttt{275b2}&\multirow{2}{*}{$\Q(\zeta_5)$}  \\
\cline{3-4}
&  & $\Z/5\Z$  & \texttt{11a1} &  \\
\cline{2-5}

                              & $\Z/2\Z\oplus\Z/4\Z$& $\{\mathcal O \}$ & \texttt{704a2} &  $\Q(\zeta_5,\sqrt{2})$ \\
\cline{2-5}                           
                              & \multirow{2}{*}{$(\Z/4\Z)^2$}& \multirow{2}{*}{$\{\mathcal O \}$}  &  \multirow{2}{*}{\texttt{18176b2}} & $\Q(\zeta_5,\alpha)$ \\
                              & &  & &          $\mbox{\small$\alpha^4-4\alpha^2+2=0$}$\\                       
\hline\hline
\multirow{3}{*}{$6$} & \multirow{2}{*}{$(\Z/2\Z)^2$}  & $\Z/2\Z$ &    \texttt{98a3} &  \multirow{2}{*}{$\Q(\zeta_6,\sqrt{-7})$}  \\
\cline{3-4}
&  & $\Z/6\Z$ &   \texttt{14a1} &   \\
\cline{2-5}

                              & $(\Z/2\Z)^3$& $\Z/2\Z$ &   \texttt{448c3} & $\Q(\zeta_6,\sqrt{2},\sqrt{-7})$ \\
\hline
\end{tabular}
\end{center}

\newpage

\subsection{Table 3: $n=2$ or $4$}\label{table3} In this table, and Table 4 in Section \ref{table4}, for each $n=2$, $4$, or $8$, and for each of the possible isomorphism types of $\Gal(\Q(E[n])/\Q)$, we give
\begin{enumerate}[(a)]
\item the possible torsion structures of $E(\Q)[n]$, 
\item an infinite family of examples with each Galois group and torsion structure, using the notation of \cite{rouse},
\item an example of an elliptic curve in the family given in (b), and 
\item the actual division field $\Q(E[n])$ for the concrete example curve given in (c). We remark here that the actual field $\Q(E[n])$ depends on the choice of $E/\Q$, when $\Q(\zeta_n)\subsetneq \Q(E[n])$. 
\end{enumerate}

\vskip 0.1in

\begin{center}
\renewcommand{\arraystretch}{1.3}
\begin{tabular}{|c|c|c|c|c|c|}
\hline
$n$ & $\Gal(\Q(E[n])/\Q)$ & $E(\Q)[n]$ & R-ZB & Cremona & $\Q(E[n])$\\
\hline\hline
\multirow{4}{*}{$2$} & $\{0\}$ & $\Z/2\Z\oplus\Z/2\Z$ & \texttt{X8} & \texttt{315b2}& $\Q$  \\
\cline{2-6}
                              & $\Z/2\Z$& $\Z/2\Z$ & \texttt{X6} &\texttt{69a1}   & $\Q(\sqrt{-23})$ \\
\cline{2-6}                           
                              & \multirow{2}{*}{$\Z/3\Z$}& \multirow{2}{*}{\{$\mathcal O$\}} & \multirow{2}{*}{\texttt{X2}} & \multirow{2}{*}{\texttt{196a1}}& $\Q(\alpha)$ \\                   
                              & & & & & $\mbox{\small$\alpha^3 - \alpha^2 - 2\alpha + 1=0$}$\\            
\hline\hline
\multirow{17}{*}{$4$} &\multirow{2}{*}{$\Z/2\Z$}  & $\Z/4\Z$ &\texttt{X60d}  &   \texttt{40a4} & $\Q(\zeta_4)$\\
\cline{3-6}
                              & & $\Z/2\Z\oplus\Z/4\Z$ &  \texttt{X58i} & \texttt{195a3} & $\Q(\zeta_4)$\\
\cline{2-6} 
                              & \multirow{9}{*}{$(\Z/2\Z)^2$}&\multirow{2}{*}{$\Z/2\Z$}  & \texttt{X60} & \texttt{360e4} & $\Q(\zeta_4,\sqrt{3})$ \\
\cline{4-6}                           
                              & & &  \texttt{X27f} & \texttt{936i4} & $\Q(\zeta_4,\sqrt{3})$ \\
\cline{3-6}   
                      & & $\Z/4\Z$ &  \texttt{X27h} & \texttt{205a4} & $\Q(\zeta_4,\sqrt{10})$ \\
\cline{3-6}     
                      & &\multirow{5}{*}{$\Z/2\Z\oplus\Z/2\Z$}  &\texttt{X58} & \texttt{6435f3} & $\Q(\zeta_4,\sqrt{3})$ \\
\cline{4-6}     
                              & &  &  \texttt{X24d} & \texttt{2200f2} & $\Q(\zeta_4,\sqrt{5})$ \\
\cline{4-6}     
                              & &  &  \texttt{X24e} & \texttt{205a2} & $\Q(\zeta_4,\sqrt{41})$ \\
\cline{4-6}     
                              & &  &  \texttt{X25h} & \texttt{231a3} & $\Q(\zeta_4,\sqrt{7})$ \\
\cline{4-6}     
                              & &  &  \texttt{X25i} & \texttt{1287e4} & $\Q(\zeta_4,\sqrt{3})$ \\
\cline{3-6}     
                              & & $\Z/2\Z\oplus\Z/4\Z$ &  \texttt{X25n} & \texttt{231a2} & $\Q(\zeta_4,\sqrt{33})$ \\
\cline{2-6}                           
                              & \multirow{5}{*}{$(\Z/2\Z)^3$} & $\Z/2\Z$ &\texttt{X27}  & \texttt{1845c4}  & $\Q(\zeta_4,\sqrt{3},\sqrt{10})$\\                               
\cline{3-6}                           
& & \multirow{4}{*}{$\Z/2\Z\oplus\Z/2\Z$} &\texttt{X24} & \texttt{1845c2}  & $\Q(\zeta_4,\sqrt{3},\sqrt{41})$\\                               
\cline{4-6}                           
& &  & \texttt{X25} & \texttt{3465e2}  & $\Q(\zeta_4,\sqrt{3},\sqrt{35})$\\                                         
\cline{4-6}      
& &  & \texttt{X8b} & \texttt{1089g2}  & $\Q(\zeta_4,\sqrt{3},\sqrt{11})$\\                                         
\cline{4-6}      
& &  & \texttt{X8d} & \texttt{33a1}  & $\Q(\zeta_4,\sqrt{3},\sqrt{11})$\\                                         
\cline{2-6}      
                              & $(\Z/2\Z)^4$& $\Z/2\Z\oplus\Z/2\Z$ & \texttt{X8} & \texttt{315b2}  & $\Q(\zeta_4,\sqrt{3},\sqrt{5},\sqrt{7})$\\      
\hline
\end{tabular}
\end{center}
\newpage
\subsection{Table 4: $n=8$}\label{table4}
\begin{center}
\renewcommand{\arraystretch}{1.2}
\begin{tabular}{|c|c|c|c|c|c|}
\hline
$n$ & $\Gal(\Q(E[n])/\Q)$ & $E(\Q)[n]$ & R-ZB & Cremona & $\Q(E[n])$\\
\hline\hline

\multirow{34}{*}{$8$} &\multirow{34}{*}{$(\Z/2\Z)^4$}  & \multirow{28}{*}{$\Z/2\Z\oplus\Z/2\Z$} & X183a & 277440dv4 & 
\multirow{7}{*}{ $\Q(\zeta_8,\sqrt{15},\sqrt{17})$}\\  \cline{4-5}
& &  & \texttt{X183c} &  \texttt{130050bu3} & \\  \cline{4-5}
& &  & \texttt{X183e} &  \texttt{16320bb3} & \\ \cline{4-5}
& &  & \texttt{X183h} &  \texttt{8670v3} &\\  \cline{4-5}
& &  & \texttt{X183k} &  \texttt{16320bt4} & \\  \cline{4-5}
& &  & \texttt{X183l} &  \texttt{69360cb3} & \\  \cline{4-6}

& &  & \texttt{X187a} &  \texttt{4800b3} & \multirow{10}{*}{ $\Q(\zeta_8,\sqrt{3},\sqrt{5})$}\\  \cline{4-5}
& &  & \texttt{X187b} &  \texttt{2880r4} & \\  \cline{4-5}
& &  & \texttt{X187c} &  \texttt{225c4} & \\  \cline{4-5}
& &  & \texttt{X187e} &  \texttt{960g3} & \\  \cline{4-5}
& &  & \texttt{X187f} &  \texttt{14400y4} & \\  \cline{4-5}
& &  & \texttt{X187g} &  \texttt{45a4} & \\  \cline{4-5}
& &  & \texttt{X187h} &  \texttt{75b3} & \\  \cline{4-5}
& &  & \texttt{X187i} &  \texttt{960i4} & \\  \cline{4-5}
& &  & \texttt{X187j} &  \texttt{1200j4} & \\  \cline{4-5}
& &  & \texttt{X187l} &  \texttt{4800cd4} & \\  \cline{4-6}

& &  & \texttt{X189a} &  \texttt{141120el3} & \multirow{8}{*}{ $\Q(\zeta_8,\sqrt{3},\sqrt{7})$}\\  \cline{4-5}
& &  & \texttt{X189b} &  \texttt{20160ce4} &\\  \cline{4-5}
& &  & \texttt{X189c} &  \texttt{1470k4} &\\  \cline{4-5}
& &  & \texttt{X189h} &  \texttt{47040dm3} &\\  \cline{4-5}
& &  & \texttt{X189i} &  \texttt{6720cd4} &\\  \cline{4-5}
& &  & \texttt{X189j} &  \texttt{4410r3} &\\  \cline{4-5}
& &  & \texttt{X189k} &  \texttt{630c4} & \\  \cline{4-5}
& &  & \texttt{X189l} &  \texttt{6720c3} &\\  \cline{4-6}

& &  & \texttt{X183b} &  \texttt{196800by3} & \multirow{4}{*}{ $\Q(\zeta_8,\sqrt{5},\sqrt{41})$}\\  \cline{4-5}
& &  & \texttt{X183f} &  \texttt{252150c4} & \\  \cline{4-5}
& &  & \texttt{X183g} &  \texttt{6150n3} &\\  \cline{4-5}
& &  & \texttt{X183j} &  \texttt{50430z4} &\\  \cline{3-6}
                              &  & \multirow{6}{*}{$\Z/2\Z\oplus\Z/4\Z$} &   \texttt{X183d} &  \texttt{510e3} &$\Q(\zeta_8,\sqrt{15},\sqrt{17})$\\  \cline{4-6}
& &  & \texttt{X183i} &  \texttt{1230f3} & $\Q(\zeta_8,\sqrt{5},\sqrt{41})$\\  \cline{4-6}

& &  & \texttt{X187d} &  \texttt{15a1} & \multirow{2}{*}{ $\Q(\zeta_8,\sqrt{3},\sqrt{5})$}\\  \cline{4-5}
& &  & \texttt{X187k} &  \texttt{240d4} & \\  \cline{4-6}

& &  & \texttt{X189d} &  \texttt{210e3} & \multirow{2}{*}{ $\Q(\zeta_8,\sqrt{3},\sqrt{7})$}\\  \cline{4-5}
& &  & \texttt{X189e} &  \texttt{1680p4} & \\  \cline{2-6}
\hline
\end{tabular}
\end{center}
\newpage 
 \begin{center}
\renewcommand{\arraystretch}{1.3}
\begin{tabular}{|c|c|c|c|c|c|}
\hline
$n$ & $\Gal(\Q(E[n])/\Q)$ & $E(\Q)[n]$ & R-ZB & Cremona & $\Q(E[n])$\\
\hline\hline
\multirow{15}{*}{$8$}                      & \multirow{14}{*}{$(\Z/2\Z)^5$}  & \multirow{13}{*}{$\Z/2\Z\oplus\Z/2\Z$} &\texttt{X183} & \texttt{1530c4} & $\Q(\zeta_8,\sqrt{3},\sqrt{5},\sqrt{17})$\\  \cline{4-6}
& &  & \texttt{X187} & \texttt{735e4} &$\Q(\zeta_8,\sqrt{3},\sqrt{5},\sqrt{7})$\\  \cline{4-6}
& &  & \texttt{X189} & \texttt{25410bj3} & $\Q(\zeta_8,\sqrt{3},\sqrt{7},\sqrt{11})$\\  \cline{4-6}

& &  & \texttt{X58a} & \texttt{2535f3} &  \multirow{11}{*}{$\Q(\zeta_8,\sqrt{3},\sqrt{5},\sqrt{13})$}\\  \cline{4-5}
& &  & \texttt{X58b} & \texttt{585f3} & \\  \cline{4-5}
& &  & \texttt{X58c} & \texttt{7605p3} &\\  \cline{4-5}
& &  & \texttt{X58d }& \texttt{2925g3} &\\  \cline{4-5}
& &  & \texttt{X58e} & \texttt{12480a3} &\\  \cline{4-5}
& &  & \texttt{X58f} & \texttt{486720dr3} &\\  \cline{4-5}
& &  & \texttt{X58g} & \texttt{38025bb4} &\\  \cline{4-5}
& &  & \texttt{X58h} & \texttt{40560bg4} &\\  \cline{4-5}
& &  & \texttt{X58j} & \texttt{187200eh4} &\\  \cline{4-5}
& &  & \texttt{X58k} & \texttt{975a3} & \\  \cline{3-6}
  & & $\Z/2\Z\oplus\Z/4\Z$& \texttt{X58i} & \texttt{195a3} &$\Q(\zeta_8,\sqrt{3},\sqrt{5},\sqrt{13})$\\  \cline{4-6}
\cline{2-6}             
                              & $(\Z/2\Z)^6$& $\Z/2\Z\oplus\Z/2\Z$ & \texttt{X58}  & \texttt{6435f3}  & $\Q(\zeta_8,\sqrt{3},\sqrt{5},\sqrt{11},\sqrt{13})$\\                               
\hline
\end{tabular}

\end{center}

\end{document}